\renewcommand{\a}{{\mathfrak a}}
\newcommand{\A}{{\mathbb A}}
\newcommand{\Q}{{\mathbb Q}}
\newcommand{\Z}{{\mathbb Z}}
\newcommand{\R}{{\mathbb R}}
\newcommand{\C}{{\mathbb C}}
\newcommand{\GL}{{\rm GL}}
\newcommand{\SL}{{\rm SL}}
\newcommand{\SO}{{\rm SO}}
\newcommand{\GSp}{{\rm GSp}}
\newcommand{\Sp}{{\rm Sp}}
\newcommand{\Sym}{\mathrm{Sym}}
\newcommand{\ii}{\sqrt{-1}}
\newcommand{\cG}{{\cal G}}
\newcommand{\cH}{{\cal H}}
\newcommand{\cP}{{\cal P}}
\newcommand{\cM}{{\cal M}}
\newcommand{\cN}{{\cal N}}
\newcommand{\cL}{{\cal L}}
\newcommand{\OO}{{\rm O}}
\newcommand{\trace}{{\rm tr}}
\newcommand{\Hom}{{\rm Hom}}
\renewcommand{\emptyset}{\varnothing}
\newcommand{\mat}[4]{{\setlength{\arraycolsep}{0.5mm}\left[
\begin{smallmatrix}#1&#2\\#3&#4\end{smallmatrix}\right]}}
\newcommand{\forget}[1]{}
\def\qdots{\mathinner{\mkern1mu\raise0pt\vbox{\kern7pt\hbox{.}}\mkern2mu
\raise3.4pt\hbox{.}\mkern2mu\raise7pt\hbox{.}\mkern1mu}}
\newtheorem{lemma}{Lemma.}[section]
\newtheorem{theorem}[lemma]{Theorem.}
\newtheorem{corollary}[lemma]{Corollary.}
\newtheorem{proposition}[lemma]{Proposition.}
\theoremstyle{remark}
\newtheorem{remark}{Remark}
\begin{document}
\begin{title}
{\Large\bf Petersson norms of Borcherds theta lifts to $\OO(1, 8n+1)$ with applications to injectivity and sup-norm bounds}
\end{title}
\author{Simon Marshall, Hiro-aki Narita and Ameya Pitale}
\maketitle
\begin{abstract}
We give an explicit formula for the Petersson norms of theta lifts from Maass cusp forms of level one to cusp forms on orthogonal groups $\OO(1,8n+1)$. Our formula explicitly determines  archimedean local factors of the norms.  As an application, we obtain the injectivity of the lifting of Maass forms and bounds on the sup-norm of cusp forms on these orthogonal groups in terms of their Laplace eigenvalues.
\end{abstract}

\tableofcontents

\section{Introduction}
In this paper, we consider several questions regarding lifts from Maass cusp forms $f$ of level $1$ to cusp forms $F_f$ on the orthogonal group $\OO(1,8n+1)$ with respect to the arithmetic subgroup $\Gamma$ attached to even unimodular lattices $L$ of rank $N=8n$. The group $\Gamma$, denoted by $\Gamma_S$ in (\ref{eq:GammaS}), is the subgroup of $\OO(1,8n+1)$ stabilizing $L\oplus\Z^2$ with $\Z^2$ viewed as a lattice of the hyperbolic plane. These lifts were constructed in \cite{LNP} by giving an explicit formula for the Fourier coefficients of $F_f$ and using the explicit theta lift construction due to Borcherds \cite{Bo} to prove the automorphy (see Section \ref{Autom-form}). In \cite{LNP}, it was shown that the map $f \to F_f$ preserves cuspidality and is Hecke equivariant. When restricted to Hecke eigenforms, it was shown that if the lift is non-zero, then it  provides counterexamples to the generalized Ramanujan conjecture (see Theorem \ref{LNP-theorem} below for details).

\subsection*{Petersson norm of the lift}

One of the main results of this paper is to obtain an explicit formula for the Petersson norm of the lift $F_f$. For this we use the well-known methods of Rallis \cite{Ra} of doubling integrals and the Siegel Weil formula. It should be remarked that Kudla-Rallis \cite{KR88-1} suggested the regularized Siegel Weil formula, which enables us to calculate the Petersson norm without the limitation of the original Siegel-Weil formula~(cf.~\cite{We2}), and Gan-Qiu-Takeda \cite{GQT} established the formula in full generality for dual pairs of unitary groups or symplectic-orthogonal groups. We assume that $f$ is a Hecke eigenform. To use the method of \cite{Ra}, we first obtain an adelization of the theta lift construction of Borcherds for our case. For this, we follow the work of Kudla \cite{Ku03} where the adelization of the Borcherds construction is obtained for orthogonal groups of signature $(p,2)$. The $L^2$-norm of the adelization $\Phi$ of $F_f$ can then be rewritten as 

$$||\Phi||^2 = \int\limits_{\SL_2(\Q) \backslash \SL_2(\A)} \Big(\int\limits_{\SL_2(\Q) \backslash \SL_2(\A)} f_0(g_1) E((g_1^*, g_2), s_0; \Xi_0) dg_1 \Big) \overline{f_0(g_2)} dg_2.$$
Here, $f_0$ is the adelization of $f$, $\Xi_0$ is a section in an induced representation of $\Sp_4(\A)$ obtained from the Weil representation corresponding to the theta lifting, and $E$ is an Eisenstein series on $\Sp_4$ obtained from the Siegel Weil formula. As in \cite{PSS20}, the inner integral is Eulerian and can be written as a product of local integrals~(see also \cite[Section 11]{GQT}). Since we are restricted to Maass forms $f$ of level $1$, in the non-archimedean case, all the data is unramified and the integral is obtained in \cite{PSS20}. The main contribution here is the archimedean integral computation, which is never an immediate consequence from general formulas cited above. The key ingredient of the archimedean computation is to realize (see Proposition \ref{arch-K-inv-prop}) that the archimedean section $\Xi_\infty$ is in the trivial $K$-type of the $\Sp_4(\R)$ representation, i.e. it is invariant under the maximal compact subgroup of $\Sp_4(\R)$.

In Section \ref{arch-comp-sec}, we compute the local archimedean integral to obtain the following result (see Theorem \ref{main-norm-thm}).

\begin{theorem}\label{norm-thm-intro}
Let $L$ be an even unimodular lattice of dimension $N$. Let $f \in S(\SL_2(\Z);-\frac{r^2 + 1}{4})$ be a Hecke Maass eigenform with respect to $\SL_2(\Z)$ and let $\pi$ be the irreducible cuspidal automorphic representation of $\GL_2(\A)$ corresponding to $f$.  Let $F_f$ be the lift of $f$ to a cusp form on $\OO(1, N+1)$ with respect to the arithmetic subgroup $\Gamma$ attached to $L$. Then the Petersson norm of $F_f$ is given by the formula
$$||F_f||^2 = \frac{L(\frac N2, \pi, {\rm Ad})}{\zeta(\frac N2 + 1)\zeta(N)} \Big(2^{1-\frac{N}{2}}\pi^2 \frac{\Gamma(\frac N4+\frac{\sqrt{-1}r}2) \Gamma(\frac N4-\frac{\sqrt{-1}r}2)}{\Gamma(\frac N4 + \frac 12)^2}\Big) ||f||^2.$$
Here, $L(s, \pi, {\rm Ad})$ is the finite part of the degree $3$ adjoint $L$-function of $\pi$. 
\end{theorem}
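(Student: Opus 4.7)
The plan is to follow the Rallis doubling method combined with the (regularized) Siegel--Weil formula, as outlined in the introduction. First, I would adelize the Borcherds lift: following Kudla's signature $(p,2)$ construction adapted to our signature $(1, 8n+1)$ setting, attach to $F_f$ a cusp form $\Phi$ on the adelic orthogonal group so that $||F_f||^2$ and $||\Phi||^2$ agree up to an explicit volume factor. The doubling identity then expresses
\[
||\Phi||^2 = \int_{\SL_2(\Q)\backslash\SL_2(\A)} \Big( \int_{\SL_2(\Q)\backslash\SL_2(\A)} f_0(g_1)\, E((g_1^*, g_2), s_0; \Xi_0)\, dg_1 \Big) \overline{f_0(g_2)}\, dg_2,
\]
where $E$ is the Siegel Eisenstein series on $\Sp_4$ produced by the regularized Siegel--Weil formula of Kudla--Rallis and Gan--Qiu--Takeda, applied to $\Xi_0$ built from our Weil-representation Schwartz function.

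The next step is to recognize the inner integral as a standard Piatetski-Shapiro--Rallis zeta integral, hence Eulerian. Factoring $\Xi_0 = \otimes_v \Xi_v$, the global integral becomes a product of local zeta integrals $Z_v(s_0, \Xi_v, W_v)$ with $W_v$ the local data attached to $\pi_v$. At every finite place $p$ the input is unramified --- $L$ is even unimodular, $f$ has level one, $\pi_p$ is spherical and $\Xi_p$ is the spherical section --- so the local calculation is exactly the one carried out in PSS20. Setting $s_0 = N/2$ and multiplying over primes produces the finite Euler product
\[
\frac{L(\tfrac{N}{2}, \pi, \Ad)}{\zeta(\tfrac{N}{2}+1)\,\zeta(N)}
\]
that appears in the theorem.

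The main obstacle is the archimedean integral, and this is where I expect the real work. The crucial structural input is Proposition \ref{arch-K-inv-prop}, which asserts that the archimedean section $\Xi_\infty$ lies in the trivial $K$-type of the relevant induced representation of $\Sp_4(\R)$. Since $f$ has level one, its archimedean component $\pi_\infty$ is also spherical, so the zeta integral reduces to a pairing of two spherical vectors, and via the Iwasawa decomposition on $\Sp_4(\R)$ can be collapsed to a one-variable Mellin-type integral against the spherical function of $\pi_\infty$ with spectral parameter $r$. Concretely, $\Xi_\infty$ arises from a Gaussian Schwartz function, so after unfolding it becomes a classical Barnes / beta-type integral that is evaluated using standard $\Gamma$-function identities. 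Keeping careful track of the shifts produced by the Weil representation and the induction parameter $s_0 = N/2$ should yield the archimedean factor
\[
2^{1-\frac{N}{2}}\pi^2 \, \frac{\Gamma\bigl(\tfrac{N}{4} + \tfrac{\sqrt{-1}r}{2}\bigr)\Gamma\bigl(\tfrac{N}{4} - \tfrac{\sqrt{-1}r}{2}\bigr)}{\Gamma\bigl(\tfrac{N}{4} + \tfrac{1}{2}\bigr)^2}.
\]
Assembling the archimedean factor with the unramified Euler product, and absorbing the outer pairing against $\overline{f_0(g_2)}$ into $||f||^2$, delivers the claimed formula. The bookkeeping of measures between the classical Petersson norm and its adelic counterpart, and the precise normalization of $\Xi_\infty$ coming from the Kudla-adelization, are the points where numerical errors are easiest to make and deserve the most care.
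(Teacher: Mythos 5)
Your proposal follows essentially the same route as the paper: adelization of the Borcherds lift \`a la Kudla, the Rallis doubling identity, the Siegel--Weil formula producing the Siegel Eisenstein series on $\Sp_4$, the Eulerian factorization and unramified local computation from \cite{PSS20}, and the archimedean evaluation hinging on the $\tilde K_\infty$-invariance of $\Xi_\infty$ (Proposition \ref{arch-K-inv-prop}) followed by beta-integral identities. Two small bookkeeping corrections: the paper is in the convergent range and invokes Weil's convergent Siegel--Weil formula rather than the regularized version, and the induction parameter is $s_0 = (N-1)/2$ (the adjoint $L$-function is then evaluated at $s_0 + \tfrac12 = \tfrac N2$), not $s_0 = N/2$.
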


Let us remark here that, in the case of signature $(p,2)$, the Petersson inner product of the Borcherds lift (or the Kudla-Millson lift) has been computed in Theorem 4.9 of \cite{Br-Fu}. In that case, the authors start with a holomorphic modular form of full level and the archimedean integral then corresponds to a computation involving the holomorphic discrete series of $\GL_2$. 

In \cite{Mu-N-P} and \cite{NPW}, we constructed the Borcherds lifting from Maass forms with square free level $m$ to cusp forms on $\OO(1,5)$ with respect to arithmetic subgroups corresponding to maximal orders in definite quaternion algebras of discriminant $m$. The adelization of the lift and the Rallis inner product method can be applied to this case as well. The local archimedean integral can be computed similar to the computation in this paper. The main difference is the computation of the local ramified integral in the case $p | m$ which involves vector valued modular forms. We will be working on this case in the future.

\subsection*{Injectivity of the lifting map}
As is the case with most theta lift constructions, proving the automorphy of the lift is straightforward, whereas showing non-vanishing is often highly non-trivial. In \cite{LNP}, using the Fourier expansion of $F_f$ at the cusp at infinity, we were able to obtain the injectivity of the map $f \to F_f$ 
in the case that the norm map on the lattice $L$ is surjective. 

%by considering its Fourier expansion at a $\Gamma$-cusp corresponding to an even unimodular lattice whose image of the norm map exhausts all positive integers~(cf.~\cite[Section 3.3]{LNP}). As such a lattice we can choose a direct sum of the $E_8$-lattice with an arbitrary even umimodular lattice. The explicit formula for the Fourier coefficients of $F_f$ thus allows us to reduce the question of non-vanishing to that of Fourier coefficients of $f$.\\
%% in the case where the image of the norm map on the lattice $L$ is all of the natural numbers. Even if we restrict ourselves to Hecke eigenforms $f$ (guaranteeing that the first Fourier coefficient of $f$ is non-zero) we still have to restrict ourselves to lattices $L$ that have vectors of norm $1$. 
%\fbox{Should we change this discussion in light of the observation about getting norm one vectors by changing lattice?}\\
%\fbox{$\Rightarrow$H.N.: modified as above, taking into account the bijection between the equiv. classes of even unimodular lattices}\\
%\fbox{and $\Gamma$-cusps.}

For general even unimodular lattices, an immediate corollary to Theorem \ref{norm-thm-intro} is the injectivity of the map $f \to F_f$, when restricted to Hecke eigenforms. We can extend this injectivity to non-Hecke eigenforms by using the Linear Algebra trick used in \cite{NPW} (see Theorem 7.1 of \cite{NPW}). Note that the latter requires information on Hecke eigenvalues of $F_f$ when $f$ is a Hecke eigenform, and these were computed in \cite{LNP}. See Corollary \ref{inj-cor} for details. Note that one could possibly extend the injectivity of the Kudla-Millson lift obtained in \cite{Br-Fu} to non-Hecke eigenforms if one had further information on the  lift in the case of Hecke eigenforms.

Let us remark here that it is well known that all even unimodular lattices of dimension $N$ form a single genus, and are in one-to-one correspondence with the $\Gamma$-cusps (see Section \ref{Four-exp-sec}). If we considered the Fourier expansion of $F_f$ at a $\Gamma$-cusp corresponding to an even unimodular lattice whose image of the norm map exhausts all positive integer, then it is possible to follow the method in \cite{LNP} to get injectivity of the map $f \to F_f$ in general.

\subsection*{Bounds on sup-norm of the lift $F_f$}

Another main result of this paper is to prove bounds on the sup-norm of $F_f$.  The problem of estimating the sup-norm of Laplace eigenfunctions on a compact Riemannian manifold is a fundamental one in harmonic analysis and mathematical physics.  The most basic result on this problem is due to Avacumovi\'{c} \cite{Av} and Levitan \cite{Lev}.  If $X$ is a compact Riemannian manifold without boundary and $\phi$ is an eigenfunction of the Laplacian on $X$ with eigenvalue $-\Lambda \le 0$, they show that 
\begin{equation}\label{SS-eqn}
\frac{||\phi||_\infty}{||\phi||
_2} \ll \Lambda^{\frac {{\rm dim} X-1}4}.
\end{equation}
In general, this is the best possible estimate and is indeed achieved when $X$ is a sphere.  On the other hand, one expects stronger bounds to hold on a generic manifold.  For instance, when $X$ is negatively curved, B\'{e}rard \cite{Be} showed that the bound (\ref{SS-eqn}) can be improved by a factor of $\sqrt{\log \Lambda}$.  Moreover, it is often possible to improve (\ref{SS-eqn}) by a power of $\Lambda$ under arithmetic assumptions on $X$ and $\phi$.  The first such improvement is due to Iwaniec and Sarnak \cite{Iw-Sa-2}. They considered $X$ a congruence arithmetic hyperbolic surface arising from a quaternion algebra over $\Q$ (possibly split), and $\phi$ a Hecke--Maass form.  In this case, they improved on (\ref{SS-eqn}) to obtain 
\begin{equation}\label{Iw-Sa-bound}
\frac{||\phi||_\infty}{||\phi||
_2} \ll_\epsilon \Lambda^{\frac 5{24}+\epsilon}.
\end{equation}
They also obtained a lower bound $\sqrt{ \log \log \Lambda}$ for an infinite sequence of $\phi$, which was later improved to $\exp( (1 + o(1)) \sqrt{ \log \Lambda / \log \log \Lambda})$ by Mili\'{c}evi\'{c} \cite{Mil2}. Let us also mention several other works on the sup-norm problem \cite{Bl-Ho}, \cite{Bl-P}, \cite{Bru-Ma}, \cite{Bru-Te}, \cite{Sa17}, \cite{Sa172}. Among other things we also cite \cite{Ra-Wa}, which includes an excellent review of the sup-norm problems in terms of elliptic differential operators.

We note that in these arithmetic settings, where $X$ is taken to be a locally symmetric space, one generally assumes that $\phi$ is an eigenfunction not just of the Laplacian, but of the full ring of invariant differential operators.  Under this assumption on $\phi$, it was shown by Sarnak \cite{Sa} that the bound (\ref{SS-eqn}) can be strengthened to
\begin{equation}\label{compact-bound}
\frac{||\phi||_\infty}{||\phi||
_2} \ll \Lambda^{\frac{{\rm dim} X - {\rm Rank} X}4}.
\end{equation}
This is sharp on spaces of compact type, and is the natural analog of (\ref{SS-eqn}) for these eigenfunctions.
\medskip

It is also interesting to investigate the sup-norm of eigenfunctions in the case when $X$ is not compact.  In this case, the bound (\ref{SS-eqn}) (and (\ref{compact-bound}), under the appropriate assumptions) continues to hold on fixed compact sets, and it is natural to ask whether it in fact holds globally (i.e. over the entire manifold).  For this to happen one certainly needs $\| \phi \|_\infty$ to be finite, and this is not generally true, even when $X$ is a finite volume locally symmetric space.  However, it is true if one assumes that $X$ is a finite volume locally symmetric space and $\phi$ is cuspidal~(cf.~\cite{HC}), and so we make these assumptions from now on.

In some cases, it is known that the bounds (\ref{SS-eqn}) and (\ref{compact-bound}) hold globally.  See for instance \cite{BHMM} for the case of Hecke--Maass forms on ${\rm GL}_2$ over a number field, and \cite{BHM} for Hecke--Maass forms on the space ${\rm PGL}_3(\Z) \backslash {\rm PGL}_3(\R) / {\rm PO(3)}$.  (Note that \cite{BHM} establishes a bound stronger than (\ref{SS-eqn}), but not as strong as (\ref{compact-bound}).)  On the other hand, it was shown in \cite{Bru-Te} that (\ref{SS-eqn}) fails on ${\rm PGL}_n(\Z) \backslash {\rm PGL}_n(\R) / {\rm PO(n)}$ for $n \ge 6$.  The reason for this failure is the large peaks of the ${\rm GL}_n$ Whittaker function in higher rank, which lead to large values of cusp forms via the Fourier expansion.  Moreover, it is generally expected that the large values produced in this way occur high in the cusp, at height roughly $\sqrt{\Lambda}$; \cite{Bru-Te} establishes the weaker result that the suprema of these cusp forms occur at points tending to infinity.  This phenomenon of cusp forms having a peak high in the cusp is a general one, which is already present for ${\rm GL}_2$ where it is caused by the transition range of the Bessel function $K_{\ii r}(y)$ at $y \sim r$ \cite{Sa}.  However, in this case the peak produced is only of size $\Lambda^{1/12}$, which is smaller than (\ref{SS-eqn}).  These results lead one to ask the general question of whether a sequence of cusp forms on $X$ realize their suprema in a fixed compact set, or at a sequence of points tending to infinity.

We remark that if, instead of Maass forms, one considers holomorphic modular forms of growing weight for the group ${\rm SL}_2(\Z)$, then the growth rate of the sup norm was determined by Xia \cite{Xia} using the Fourier expansion.  Similar results in the case of Siegel modular forms were obtained by Blomer \cite{Bl}.
\medskip

We prove the following result on the sup norm of the forms $F_f$.

\begin{theorem}\label{supnorm-thm-intro}
Let $f \in S(\SL_2(\Z);-\frac{r^2 + 1}{4})$ be a non-zero Hecke Maass eigenform with Fourier coefficients $\{c(m) : m \in \Z\}$ satisfying $c(m) = \pm c(-m)$ for all $m \in \Z$. Suppose $L$ is an even unimodular lattice of dimension $N$. Let $F_f$ be the lift of $f$ to a cusp form on $\OO(1, N+1)$ with respect to the arithmetic subgroup $\Gamma$ attached to $L$. Let $-\Lambda$ be the eigenvalue of $F_f$ with respect to the Casimir operator.  Then for any $\epsilon > 0$ and any $r$, we have
$$\frac{||F_f||_\infty}{||F_f||_2} \ll_{N, \epsilon, \Gamma} \Lambda^{\frac N4+\frac{N(1+2\theta)}{8(N+1+2\theta)} + \epsilon} \leq   \Lambda^{\frac{N}4 + \frac{\theta}4+ \frac 18+ \epsilon},$$
where $\theta = 7/64$ is the current best estimate towards the Ramanujan conjecture for Maass forms. We also have the lower bound 
\begin{equation}\label{supnorm-thm-lower}
\Lambda^{\frac{N}8 + \frac{1}{12} - \epsilon} \ll_{N, \epsilon, \Gamma} \frac{||F_f||_\infty}{||F_f||_2}.
\end{equation}
\end{theorem}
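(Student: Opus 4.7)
The plan is to bound $F_f$ by splitting $X = \Gamma \backslash \OO(1,N+1)/\OO(N+1)$ into a compact ``body'' $X_{\leq Y}$ and cuspidal regions $X_{> Y}$ (one for each $\Gamma$-cusp, equivalently each isomorphism class of even unimodular lattice of rank $N$), and to optimize over the truncation height $Y$. Since $F_f$ comes from an automorphic representation it is an eigenfunction of the full ring of invariant differential operators on the symmetric space, so Sarnak's refinement (\ref{compact-bound}) of the local Weyl law applies; with $\dim X = N+1$ and $\mathrm{rank}\, X = 1$ this yields
$$\sup_{X_{\leq Y}} |F_f|\,/\,\|F_f\|_2 \ll_{Y,\Gamma} \Lambda^{N/4}.$$
Throughout I use that $\Lambda = N^2/4 + r^2 \asymp r^2$ for large $r$.

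In each cusp I would use the explicit Fourier expansion of $F_f$ from \cite{LNP}, which in horospherical coordinates $(x,y) \in \R^N \times \R_{>0}$ has the shape
$$F_f(x,y) = y^{N/2} \sum_{\beta \in L \setminus \{0\}} a_\beta\, K_{ir}(2\pi |\beta| y)\, e(\beta \cdot x),$$
with $a_\beta$ equal to an elementary factor times $c(|\beta|^2/2)$. Combining the Kim--Sarnak bound $|c(m)| \ll_\epsilon m^{\theta+\epsilon}$ with the standard uniform bounds for $K_{ir}(t)$ (rapid decay for $t \gg r$, Airy value $\ll r^{-1/3}$ near $t \sim r$, and $\ll r^{-1/2}$ in the oscillatory range), and truncating the Fourier sum at $|\beta| \leq C r/y$, a routine estimate produces a pointwise cuspidal bound of the form $\ll_\epsilon \Lambda^\epsilon Y^{-N/2 - 2\theta} r^{N + 2\theta}$ for $y \geq Y$. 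Theorem \ref{main-norm-thm}, via Stirling on the ratio of Gamma factors (the exponential $e^{-\pi r/2}$ there cancelling the one implicit in $\|f\|_2^2$), pins down the $r$-dependence of $\|F_f\|_2$. Balancing the compact and cuspidal bounds at $Y \asymp \Lambda^{1/(2(N+1+2\theta))}$ then yields the announced exponent $N/4 + N(1+2\theta)/(8(N+1+2\theta)) + \epsilon$, and dropping the $N$ in the denominator gives the weaker but cleaner $N/4 + \theta/4 + 1/8 + \epsilon$.

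For the lower bound I would evaluate $F_f$ at a specific high point. Fix $\beta_0 \in L$ of minimal nonzero norm and choose $y_0 \asymp r$ so that $2\pi |\beta_0| y_0$ lies in the Airy transition range of $K_{ir}$, where $|K_{ir}| \gg r^{-1/3}$; the single $\beta_0$-term then contributes $\gg y_0^{N/2} |a_{\beta_0}| r^{-1/3}$ once the phase of $e(\beta_0 \cdot x_0)$ is aligned by the choice of $x_0$. To rule out cancellation, I would use Rankin--Selberg on $\GL_2$ to control the second moment $\sum_{|\beta| \leq R} |a_\beta|^2$, together with the decay of $K_{ir}$ outside the Airy window and a Cauchy--Schwarz argument after a short $x_0$-average, to show that the remaining Fourier terms do not cancel the isolated Airy contribution. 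Non-vanishing of $a_{\beta_0}$ is guaranteed by the sign hypothesis $c(m) = \pm c(-m)$ via the explicit formula in \cite{LNP}. Normalizing by $\|F_f\|_2$ through Theorem \ref{main-norm-thm} yields $\|F_f\|_\infty / \|F_f\|_2 \gg \Lambda^{N/8 + 1/12 - \epsilon}$, where the $N/8$ comes from the archimedean Gamma ratio and $1/12$ is the familiar Sarnak cusp spike.

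The main obstacle will be the bookkeeping in the cuspidal region: the horospherical lattice has full rank $N$, so the number of Fourier modes active at height $y$ is of order $(r/y)^N$, and even the modest Kim--Sarnak subconvexity exponent $\theta = 7/64$ is essential to control the pointwise sum and to recover the precise exponent $N(1+2\theta)/(8(N+1+2\theta))$ after the optimization in $Y$. A second delicate point is ruling out destructive cancellation with the main term in the lower bound, where the second-moment input from Rankin--Selberg is deployed.
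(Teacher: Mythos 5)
There is a genuine gap at the heart of your upper-bound argument: you apply Sarnak's bound (\ref{compact-bound}) on the truncated region $X_{\le Y}$ with an implied constant $\ll_{Y,\Gamma}$, and then optimize over a truncation height $Y$ that grows with $\Lambda$. That is circular unless the $Y$-dependence is made explicit, and making it explicit is precisely the content of the paper's Theorem \ref{trace-thm}: the pre-trace inequality, after counting the $\Gamma_\infty$-translates of a point at height $y$ (whose number grows like $y^N$ within the support of the test function), gives $|F_f(n(x)a_y)|/\|F_f\|_2 \ll \Lambda^{N/4} + y^{N/2}\Lambda^{N/8}$, \emph{not} a height-uniform $\Lambda^{N/4}$. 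If your version were correct one could push $Y$ up to $r/2\pi$ and beat the exponent stated in the theorem, which should be a warning sign. It is the term $y^{N/2}\Lambda^{N/8}$ that must be balanced against the Fourier-expansion decay, and this is what produces the exponent $N/4 + N(1+2\theta)/(8(N+1+2\theta))$. Relatedly, your quoted cuspidal bound $Y^{-N/2-2\theta}r^{N+2\theta}$ and your balancing point $Y\asymp\Lambda^{1/(2(N+1+2\theta))}$ are not consistent with each other or with the claimed exponent; the correct bound (the paper's Theorem \ref{Upper-bd-thm}) is $y^{-N/2-1-2\theta}r^{3N/4+1+2\theta+\epsilon}$ for $1\ll y\le r^{11/12}$, where the prefactor $r^{-N/4+1/2}$ coming from the ratio $\|f\|_2/\|F_f\|_2$ (Proposition \ref{Petersson-norm-estimate}) and a careful treatment of the Bessel sum near the transition range (the $r^{-1/3}$ versus $r^{3/2}y^{-2}$ dichotomy at $y=r^{11/12}$) both matter; the balancing point is $y_0=r^{(N/2+1+2\theta)/(N+1+2\theta)}$.

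The lower bound has a second gap. You take $\beta_0\in L$ of minimal nonzero norm and assert that $a_{\beta_0}\neq 0$ follows from $c(m)=\pm c(-m)$. But for a general even unimodular $L$ the minimal value of $q_S$ may exceed $1$, in which case $a_{\beta_0}=\pm|\beta_0|c(1)\mu(q_S(\beta_0))$ with $\mu$ a Hecke eigenvalue that can vanish. The paper circumvents this by moving to the $\Gamma_S$-cusp whose associated lattice $L_h$ is a sum of $E_8$'s (all even unimodular lattices of rank $N$ form one genus and the cusps exhaust the genus), so that there is a vector of norm one and $A_h(\lambda_0)=\pm c(1)\neq 0$. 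Your proposed machinery for ruling out cancellation (Rankin--Selberg second moments plus a short average in $x_0$) is also unnecessary: integrating $F_f(c_\infty^{-1}n(x)a_y)e(-{}^t\lambda_0 S(\gamma^{-1}x))$ over the compact torus $\R^N/L$ isolates the single mode exactly by orthogonality, and then the Airy-range lower bound $|K_{\ii r}(4\pi y)|\gg e^{-\pi r/2}r^{-1/3}$ at $4\pi y=r+O(r^{1/3})$, together with (\ref{c(1)-estimates}) and Proposition \ref{Petersson-norm-estimate}(i), gives $\Lambda^{N/8+1/12-\epsilon}$ directly. Your identification of the sources of the exponents ($N/8$ from the archimedean normalization, $1/12$ from the Bessel transition) is correct.
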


See Theorems \ref{Upper-bd-thm} and \ref{trace-thm} for more details and more general results.  We note that the bound (\ref{SS-eqn}) in this case reads $||F_f||_\infty / ||F_f||_2 \ll \Lambda^{N/4}$, so that we are not quite able to obtain this globally.  Note that we are free to assume that $r \gg 1$.

\begin{remark}

In contrast to the lower bounds we prove here, there are several papers that establish power growth of eigenfunctions on hyperbolic manifolds, and more generally on locally symmetric spaces of noncompact type, in {\it fixed} compact sets \cite{Bru-Ma, BrumleyMar, Do, LapidOffen, Milicevic, Ru-Sa}.  We mention in particular \cite{BrumleyMar, Do}, which apply to the higher-dimensional hyperbolic setting considered here.  (The results of \cite{Bru-Ma} also include this, but the growth exponents produced are ineffective.)  In \cite{Do}, Donnelly constructs compact hyperbolic $(N+1)$-manifolds for $N \ge 4$, and sequences of Laplace eigenfunctions $\{ \phi_i \}_i$ that satisfy $\| \phi_i \|_\infty / \| \phi_i \|_2 \gg \Lambda_i^{(N-3)/4}$.  In \cite{BrumleyMar}, this lower bound was improved to $\Lambda_i^{(N-1)/4 - \epsilon}$ for $N$ even, and it is expected that this parity condition can be removed.  Moreover, similarly to $F_f$, the forms constructed in these papers are theta lifts from ${\rm SL}_2$.  These results lead one to hope that $F_f$ might satisfy the same lower bound, which should be realized in a fixed compact subset of the manifold.  As this is larger than the lower bound of $\Lambda^{\frac{N}8 + \frac{1}{12} - \epsilon}$ obtained from the peak of the Whittaker function, one might therefore expect $F_f$ to realize their sup norms in the bulk, rather than the cusp.

\end{remark}

We now give an outline of the proof of Theorem \ref{supnorm-thm-intro}.  The proof works by combining two ingredients.  The first is a standard upper bound coming from the pre-trace formula, which is valid for any square-integrable Laplace eigenfunction on any finite-volume hyperbolic orbifold.  This is Theorem \ref{trace-thm}, which gives
\begin{equation}\label{PTF-bd-intro}
\frac{|F_f(n(x)a_y)|}{||F_f||_2} \ll \Lambda^{\frac N4} + y^{\frac N2} \Lambda ^{\frac N8}.
\end{equation}
Here, $n(x)a_y$ are Iwasawa coordinates~(cf.~(\ref{Iwasawa-decomp})) adapted to one of the cusps.

The second ingredient is upper and lower bounds proved using the Fourier expansion of $F_f$. These bounds rely on the fact that these forms are theta lifts in two ways; first, by providing an explicit formula for the Fourier coefficients of $F_f$ in terms of those of $f$, and secondly through the formula for $\| F_f \|_2$ from Theorem \ref{norm-thm-intro}.  The lower bound comes from the first nonzero Fourier coefficients of $F_f$, combined with the transition behaviour of the Bessel function; see the end of Section \ref{Four-exp-sec}.

The upper bound is stated in Theorem \ref{Upper-bd-thm}, and gives
\begin{equation}\label{FE-bd-intro}
\frac{1}{ \| F_f \|_2 } | F_f( n(x) a_y) | \ll_{\epsilon, N, L} 
\begin{cases}
y^{-N/2 -1 - 2\theta} r^{3N/4 + 1 +2\theta + \epsilon} & 1 \ll y < r^{11/12}; \\
y^{-N/2 +1 - 2\theta} r^{3N/4 - 5/6 + 2\theta + \epsilon} & r^{11/12} < y \le r/2\pi; \\
e^{-Cy} & r / 2\pi < y. 
\end{cases}
\end{equation}
Here, $\theta = 7/64$ is the current best estimate towards the Ramanujan conjecture for Maass forms, and we note that $\Lambda \sim r^2$.  We combine (\ref{PTF-bd-intro}) and (\ref{FE-bd-intro}) by noting that the first bound is strong low in the cusp, while the second is strong high in the cusp.  In fact, the first bound gives $\frac{||F_f||_\infty}{||F_f||_2} \ll \Lambda^{N/4}$ when $y < \Lambda^{1/4}$, while for $y > \Lambda^{1/4}$ the second gives $\frac{||F_f||_\infty}{||F_f||_2} \ll \Lambda^{N/4 + \frac14 + \frac \theta{2}}$.  Finding the point at which they are equally strong gives the upper bound of Theorem \ref{supnorm-thm-intro}.

\subsection*{Bounds on Fourier coefficients of the lift $F_f$}

As discussed above, Theorems \ref{supnorm-thm-intro} and \ref{Upper-bd-thm} on sup-norm bounds rely on an estimate for the Fourier coefficients of $F_f$. By using standard techniques, one can obtain the Hecke bound for the Fourier coefficients $\{A(\lambda) : \lambda \in L \backslash \{0\}\}$  of $F_f$ given by $|A(\lambda)| \ll |\lambda|^{\frac N2}$. Here, the implied constant depends on the sup-norm of $F_f$, the quantity we wish to bound. Hence, the Hecke bound is not adequate for obtaining the sup-norm bounds on $F_f$. Using the explicit formula for $A(\lambda)$ in terms of the Fourier coefficients of the Maass form $f$, we are able to obtain in Proposition \ref{Petersson-norm-estimate} the improved bounds of the form $|A(\lambda)| \ll |\lambda|^{\frac N2-1+\epsilon}$. In the special case of primitive $\lambda$, i.e., satisfying $\frac 1n \lambda \notin L$ for all  integers $n > 1$, we get an even better bound $|A(\lambda)| \ll |\lambda|^{2 \theta+1+\epsilon}$, where $\theta = 7/64$ is the current best estimate towards the Ramanujan conjecture for Maass forms. Here the implied constants depend on $\epsilon, r$ and $||f||_2$.

These bounds on the size of the Fourier coefficients of the lift are analogous to the results obtained by Ikeda and Katsurada \cite{IK} in the context of Siegel cusp forms. The Hecke bound for a Siegel cusp form of genus $n$ and weight $k$ is given by $|A(T)| \ll (\det(2T))^{\frac k2}$, where  $\{A(T)\}$ are the  Fourier coefficients with $T$ running through  $n \times n$ half integral, positive definite, symmetric matrices. In \cite{IK},  for the subset of Ikeda lifts, the authors prove better bounds given by $|A(T)| \ll (\det(2T))^{\frac k2-\frac 12}$. Furthermore, they show that if $T$ is primitive, then we have $|A(T)| \ll (\det(2T))^{\frac k2 - \frac 1{12} - \frac n4}$.

\subsection*{Outline of the paper}

The paper is outlined as follows. In Section \ref{Autforms-sec}, after reviewing basic notion on orthogonal groups, their algebraic subgroups and real hyperbolic spaces, we introduce automorphic forms that are the focus of this paper. We recall the construction of the theta lifting map as worked out in \cite{LNP} and state the main theorem from that work regarding these lifts. In Section \ref{adelic-Borcherdslift}, we carry out the adelization of the theta lifts. In Section \ref{PeterssonNormsec}, we set up the global integrals for the Petersson norm of the lifting using the method of Rallis. The bulk of the computation is done in Section \ref{arch-comp-sec}, where the archimedean integral is calculated. We end Section \ref{arch-comp-sec} with the explicit formula for the Petersson norm of the lift and the injectivity of the lifting map. In Section \ref{Four-exp-sec}, we compute upper and lower bounds on the sup-norm of the lift using its Fourier expansion. In Section \ref{Tr-formula-sec}, we apply the pre-trace formula method to obtain upper bounds for any $L^2$ eigenfunction of the Laplacian on an $(N+1)$-dimensional hyperbolic orbifold. We put the results from these methods together to obtain the proof of Theorem \ref{supnorm-thm-intro} in Section \ref{thm-pf-sec}.

\subsection*{Acknowledgement}

The authors would like to thank Jens Funke for discussions regarding the local computations of the Petersson norm. We would like to thank Abhishek Saha for  giving expert advice on the analytic number theoretic aspect of the paper.  The first author would like to thank Victoria University of Wellington for their hospitality while this paper was being completed.  The first author was supported by NSF grant DMS-1902173.

\section{Classical automorphic forms}\label{Autforms-sec}
\subsection{Algebraic groups}\label{gps-hypsp}
For $N \in \mathbb{N}$, let $S\in M_N(\Q)$ be a positive definite symmetric matrix and put $Q:=
\begin{bmatrix}
& & 1\\
& -S & \\
1 & &
\end{bmatrix}$. 
We then define a $\Q$-algebraic group $\cG$ by the group 
\[
\cG(\Q):=\{g\in M_{N+2}(\Q)\mid {}^tgQg=Q\}
\]
of $\Q$-rational points. We introduce another $\Q$-algebraic group $\cH$ by the group 
\[
\cH(\Q):=\{h\in M_N(\Q)\mid {}^thSh=S\}
\]
of $\Q$-rational points.
Let $q_S$, resp.\ $q_Q$, denote the quadratic form on $\Q^N$, resp.\ $\Q^{N+2}$, associated to $S$, resp.\ $Q$, i.e.
$$
q_S(v) = \frac{1}{2}{}^tv S v,~
q_Q(w) = \frac{1}{2}{}^tw Q w
$$
for $v \in \Q^N$ and $w \in \Q^{N+2}$.
Then $\cH$, resp.\ $\cG$, is the orthogonal group associated to this quadratic form $q_S$, resp. $q_Q$.
For every place $v\leq\infty$ of $\Q$ we put $G_v:=\cG(\Q_v)$ and  $H_v:=\cH(\Q_v)$.

In addition, we introduce the standard proper $\Q$-parabolic subgroup $\cP$ of $\cG$ with the Levi decomposition $\cP=\cN\cL$, 
where the $\Q$-subgroups $\cN$ and $\cL$ are defined by
\begin{align*}
\cN(\Q)&:=\left\{\left.n(x)=
\begin{bmatrix}
1 & {}^txS & \frac{1}{2}{}^txSx\\
  & 1_N & x\\
  &      & 1
\end{bmatrix}~\right|~x\in\Q^N\right\}, \\
\cL(\Q)&:=\left\{\left.
\begin{bmatrix}
\alpha & & \\
  & \delta & \\
  &          & \alpha^{-1}
\end{bmatrix}~\right|~\alpha\in\Q^{\times},~\delta\in\cH(\Q)\right\}.  
\end{align*}

Let us regard $J = \Z^2$ as a lattice of the real hyperbolic plane, and let $L$ be a maximal lattice with respect to $S$. We then put
\[
L_0:=\left\{\left.\begin{bmatrix}
x\\
y\\
z
\end{bmatrix}\in\Q^{N+2}\right|~x,z\in\Z,~y\in L\right\} = L \oplus J,
\]
which is a maximal lattice with respect to $Q$. Here, see \cite[Chapter II,~Section 6.1]{Shi} for the definition of maximal lattices.
Through the bilinear form induced by the quadratic form $q_S$, the dual lattice $L^\sharp := \Hom_\Z(L, \Z)$ is identified with a sublattice of $\Q^N$ containing $L$.
\vskip 0.2in

{\bf Assumption: We will assume that $L$ is an unimodular even lattice. This implies that $8 | N$ and $L^\sharp = L$.}

\vskip 0.2in

For each finite prime $p<\infty$ we introduce $L_{0,p}:=L_0\otimes_{\Z}\Z_p$ and put
\[
K_p:=\{g\in G_p\mid gL_{0,p}=L_{0,p}\},
\]
which forms a maximal open compact subgroup of $G_p$. 
On the other hand, let $R:=
\begin{bmatrix}
1 & & \\
& S & \\
& & 1
\end{bmatrix}$ and put
\[
K_{\infty}:=\{g\in G_{\infty}\mid {}^tgRg=R\}, 
\]
which is a maximal compact subgroup of $G_{\infty}$. 
Let $K_f:=\prod_{p<\infty}K_p$ and $K:=K_f\times K_{\infty}$. The groups $K_f$ and $K$ form maximal compact subgroups of $\cG(\A_f)$ and $\cG(\A)$ respectively.
We furthermore put $U:=U_f\times H_{\infty}$ with $U_f:=\prod_{p<\infty}U_p$, where
\[
U_p:=\{h\in H_p\mid hL_p=L_p\}
\]
with $L_p:=L\otimes_{\Z}\Z_p$. We now set
\begin{equation}
  \label{eq:GammaS}
\Gamma_S:=\cG(\Q)\cap K_f G_{\infty} = \{\gamma\in\cG(\Q)\mid \gamma L_0=L_0\}. 
\end{equation}
We have the following result \cite[Lemma 2.1]{LNP}, 
\begin{lemma}\label{Classnum-Cusps}
\begin{enumerate}
\item (Strong approximation theorem for $\cG$)~The class number of $\cG=\OO(Q)$ with respect to $G_{\infty}K_f$ is one. Namely $\cG(\A)=\cG(\Q)G_{\infty}K_f$.
\item The class number of $\cH=\OO(S)$ with respect to $U$ coincides  with the number of $\Gamma_S$-cusps. 
\end{enumerate}
%The assumption that $L$ is unimodular implies that there is one cusp(This does not hold in general).
\end{lemma}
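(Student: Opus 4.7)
For part (i), my plan is to reduce the adelic class-number statement to the classical classification of even unimodular indefinite lattices. A standard orbit-stabilizer argument gives a bijection
\[
\cG(\Q)\backslash \cG(\A)/G_\infty K_f \;\cong\; \cG(\Q)\backslash \cG(\A_f)/K_f,
\]
and the right-hand side is in natural bijection with the set of isometry classes of $\Z$-lattices in the genus of $L_0 = L\oplus J$. Now $L_0$ has signature $(1, 8n+1)$ and is even unimodular, and by the Eichler--Milnor classification of indefinite even unimodular lattices (with both signatures positive and rank $\geq 3$), such a lattice is unique up to $\Z$-isometry. Hence the genus of $L_0$ contains only one class, giving $\cG(\A) = \cG(\Q)\, G_\infty K_f$.

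For part (ii), I would use the standard identification of $\Gamma_S$-cusps with $\Gamma_S$-orbits of rational isotropic lines in $\Q^{N+2}$. Since $\cP$ is the stabilizer of the isotropic line $\ell_0 = \Q\cdot{}^t(0,\ldots,0,1)$ and Witt's theorem implies $\cG(\Q)$ acts transitively on rational isotropic lines, the set of cusps is $\Gamma_S\backslash\cG(\Q)/\cP(\Q)$. Applying part (i), this equals $\cP(\Q)\backslash\cG(\A)/G_\infty K_f$.

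The next step is to invoke the Iwasawa decomposition $\cG(\Q_p) = \cP(\Q_p)K_p$ at every finite $p$---which holds because $Q$ is hyperbolic at each prime by unimodularity of $L_0$---yielding $\cG(\A_f) = \cP(\A_f)K_f$. Combining this with strong approximation for the unipotent radical $\cN\simeq\mathbb{G}_a^N$ (a consequence of $\Q^N + L\otimes\hat{\Z} = \A_f^N$) and the Levi decomposition $\cP = \cN \rtimes \cL$ with $\cL \simeq \GL_1\times\cH$, the double-coset space collapses to
\[
\cL(\Q)\backslash \cL(\A_f)/(\cL(\A_f)\cap K_f) \;\cong\; \bigl(\Q^\times\backslash\A_f^\times/\hat{\Z}^\times\bigr)\times\bigl(\cH(\Q)\backslash\cH(\A_f)/U_f\bigr).
\]
The $\GL_1$-factor is trivial by class-number-one for $\Q$, and compactness of $H_\infty$ allows me to reintroduce it for free, rewriting the $\cH$-factor as $\cH(\Q)\backslash\cH(\A)/U$. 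This is exactly the class number of $\cH$ with respect to $U$.

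The main delicate point I anticipate is correctly matching the various local maximal compacts across the isomorphism $\cL\simeq\GL_1\times\cH$: specifically, that $\cL(\A_f)\cap K_f$ projects cleanly onto $U_f$ on the $\cH$-factor and onto $\hat{\Z}^\times$ on the $\GL_1$-factor. These identifications rely on the orthogonal direct sum structure $L_0 = L\oplus\Z^2$ and the explicit shapes of $Q$ and $S$, and should be a routine matrix check.
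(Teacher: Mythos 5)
Your argument is essentially correct, and I note that the paper itself offers no proof of this lemma --- it is quoted verbatim from \cite[Lemma 2.1]{LNP} --- so there is no internal proof to compare against. For part (i), your route through the Eichler--Milnor classification of indefinite even unimodular lattices (one class in the genus of $L_0$, hence $\cG(\A)=\cG(\Q)G_\infty K_f$) is a legitimate packaging of the same underlying input that the label ``strong approximation'' suggests, namely strong approximation for ${\rm Spin}(Q)$ together with the spinor-norm analysis that makes genus equal class for unimodular indefinite lattices of rank $\ge 3$; either formulation is acceptable. For part (ii), the chain cusps $\leftrightarrow \Gamma_S\backslash\{\text{rational isotropic lines}\} \leftrightarrow \Gamma_S\backslash\cG(\Q)/\cP(\Q) \leftrightarrow \cP(\Q)\backslash\cP(\A_f)/(\cP(\A_f)\cap K_f) \leftrightarrow \cH(\Q)\backslash\cH(\A)/U$ is the standard argument and all the steps you flag as delicate do check out: the Iwasawa decomposition $G_p=\cP(\Q_p)K_p$ holds because $K_p$ is hyperspecial for the self-dual lattice $L_{0,p}$, and a direct computation shows $\cP(\Q_p)\cap K_p=\{n(x)\,{\rm diag}(\alpha,\delta,\alpha^{-1}): x\in L_p,\ \alpha\in\Z_p^\times,\ \delta\in U_p\}$, so the Levi factor of the compact is exactly $\hat\Z^\times\times U_f$. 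One small correction: with the conventions of the paper, $\cP$ stabilizes the isotropic line $\Q\cdot{}^t(1,0,\dots,0)$ (the first coordinate vector, which is the limit of $\nu(x,y)$ as $y\to\infty$), not $\Q\cdot{}^t(0,\dots,0,1)$; since the two stabilizers are conjugate parabolics this does not affect the count, but the line you named is fixed by the opposite parabolic.
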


The real Lie group $G_{\infty}$ admits an Iwasawa decomposition 
\[
G_{\infty}=N_{\infty}A_{\infty}K_{\infty},
\]
where
\begin{equation}\label{Iwasawa-decomp}
N_{\infty}:=\left\{n({ x})\mid { x}\in\R^N\right\}, \qquad 
A_{\infty}:=\left\{\left.
a_y=
\begin{bmatrix}
y &  & \\
 & 1_N & \\
 & & y^{-1}
\end{bmatrix}~\right|~y\in\R^{\times}_+\right\}.
\end{equation}
From the Iwasawa decomposition we can identify the homogeneous space $G_{\infty}/K_{\infty}$ with the $(N+1)$-dimensional real hyperbolic space $H_N:=\{(x,y)\mid x\in\R^N,~y\in\R_{>0}\}$ by the natural map
\[
n(x)a_y\mapsto (x,y).
\]
The cusp forms we are going to study are regarded as cusp forms on the real hyperbolic space $H_N$.

\subsection{Automorphic forms and lifting theorem}\label{Autom-form}
For $\lambda\in\C$ and a congruence subgroup $\Gamma\subset \SL_2(\R)$ we denote by $S(\Gamma,\lambda)$ the space of Maass cusp forms of weight $0$ on the complex upper half plane ${\mathfrak h}:=\{u+\sqrt{-1}v\in\C\mid v>0\}$ whose eigenvalue with respect to the hyperbolic Laplacian is $-\lambda$.

 For $r\in\C$ we denote by $\cM(\Gamma_S,r)$ the space of smooth functions $F$ on $G_{\infty}$ satisfying the following conditions: 
\begin{enumerate}
\item $\Omega\cdot F=\displaystyle\frac{1}{2N}\left(r^2-\displaystyle\frac{N^2}{4}\right)F$, where $\Omega$ is the Casimir operator defined in \cite[(2.3)]{LNP},
\item for any $(\gamma,g,k)\in\Gamma_S\times G_{\infty}\times K_{\infty}$, we have $F(\gamma gk)=F(g)$,
\item $F$ is of moderate growth.
\end{enumerate}
Let $r \in \R$. We say that $F \in \cM(\Gamma_S, \sqrt{-1}r)$ is a cusp form if it vanishes at the cusps of $\Gamma_S$. By \cite[Section 2.3]{LNP}, any cusp form $F \in \cM(\Gamma_S, \sqrt{-1}r)$ has a Fourier expansion of the form
\begin{equation}\label{F-Fourier-exp}
F(n(x)a_y)=\sum_{\lambda\in L\setminus\{0\}}A(\lambda)y^{\frac N2}K_{\sqrt{-1}r}(4\pi|\lambda|_Sy)\exp(2\pi\sqrt{-1}{}^t\lambda Sx).
\end{equation}
Here, $|\lambda|_S:=\sqrt{q_S(\lambda)}$ and $K_{\sqrt{-1}r}$ is the $K$-Bessel function.

Let 
\begin{equation}\label{f-Four}
f(\tau)=\sum_{n\not=0}c(n)W_{0,\frac{\sqrt{-1}r}{2}}(4\pi|n|v)\exp(2\pi\sqrt{-1}nu)\in S(\SL_2(\Z);-\frac{r^2 + 1}{4})
\end{equation}
 be a Maass cusp form on ${\mathfrak h}$, where we use the Whittaker function $W_{0,\frac{\sqrt{-1}r}{2}}$ to describe the Fourier expansion of $f$. Recall that we have supposed that $L$ is an even unimodular lattice of rank $N$ divisible by $8$. For $0 \neq \lambda \in L$, define
\begin{equation}\label{Alambda-defn}
A(\lambda):=|\lambda|_S\sum_{d|d_{\lambda}}c\left(-\frac{|\lambda|_S^2}{d^2}\right)d^{\frac N2-2},
\end{equation}
where $d_{\lambda}$ denotes the greatest integer such that $\frac 1{d_\lambda} \lambda \in L$. By Theorems 3.1, 3.3, 4.11 and 5.6 from \cite{LNP} we have
\begin{theorem}\label{LNP-theorem}
Let $L$ be an even unimodular lattice. Let $f \in  S(\SL_2(\Z);-\frac{r^2 + 1}{4})$ with Fourier expansion (\ref{f-Four}). Let $F_f : H_N \to \C$ be given by the Fourier expansion (\ref{F-Fourier-exp}) with Fourier coefficients $A(\lambda)$ defined in (\ref{Alambda-defn}). Then 
\begin{enumerate}
\item The map $f \to F_f$ is a map from $S(\SL_2(\Z);-\frac{r^2 + 1}{4})$ to $ \cM(\Gamma_S,\sqrt{-1}r)$ preserving cuspidality.
\item If $f$ is a Hecke eigenform, then so is $F_f$.
\item Suppose $f$ is a Hecke eigenform. Let $\pi_{F_f}$ be the cuspidal automorphic representation of $\cG(\A)$ generated by $F_f$. Then $\pi_{F_f}$ is irreducible, and thus has the decomposition into the restricted tensor product $\otimes'_{v\le\infty}\pi_{F_f, v}$ of irreducible admissible representations $\pi_{F_f, v}$ of $G_v$. For $v=p<\infty$, the representation $\pi_{F_f, p}$ is the spherical constituent of an unramified principal series representation of $G_p$.
\item The finite part of the degree $N+2$ standard $L$-function of $\pi_{F_f}$ is given by
$$L(s, \pi_{F_f}) = L(s, \pi_f, {\rm Sym}^2) \prod\limits_{i=-(\frac N2-1)}^{\frac N2-1} \zeta(s-i),$$
where $\pi_f$ is the cuspidal, automorphic representation of $\GL_2(\A)$ generated by $f$ and $ L(s, \pi_f, {\rm Sym}^2)$ is the degree $3$ symmetric square $L$-function of $\pi_f$.
\item For every finite prime $p<\infty$, $\pi_{F_f, p}$ is non-tempered while $\pi_{F_f, \infty}$ is tempered.
\end{enumerate}
\end{theorem}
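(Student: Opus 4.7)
The plan is to construct $F_f$ as a theta lift in the spirit of Borcherds attached to the lattice $L_0 = L \oplus J$ and the dual pair $(\SL_2, \cG)$, then deduce all five parts from the Fourier expansion at infinity together with the local unramified theta correspondence. First I would assemble a scalar theta kernel $\Theta(\tau, g)$ on $\mathfrak{h} \times G_\infty$: at the archimedean place pick the standard Gaussian Schwartz function, twisted so that $\Theta$ is $K_\infty$-invariant in $g$ and of weight $0$ in $\tau$, and at the finite places take the characteristic function of $\widehat{L}_0$. The assumptions that $L$ is even unimodular and $8 \mid N$ are precisely what is needed to trivialize the Weil representation on the relevant metaplectic cover, so that $\Theta$ is genuinely $\SL_2(\Z) \times \Gamma_S$-invariant. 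Setting
$$F_f(g) := \int_{\SL_2(\Z) \backslash \mathfrak{h}} f(\tau) \, \overline{\Theta(\tau, g)} \, v^{-2} \, du \, dv,$$
the $\Gamma_S$- and $K_\infty$-invariance are immediate, and the Weil representation intertwining $\Omega_{\cG}$ with the hyperbolic Laplacian on $\mathfrak{h}$ forces $F_f$ to be a Casimir eigenfunction with the required eigenvalue.

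For part (i), I would compute the Fourier expansion of $F_f$ along $N_\infty$. Decomposing $\Theta$ by the Levi of $\cP$, the lift breaks into a sum indexed by $\lambda \in L$: the $\lambda = 0$ contribution at any $\Gamma_S$-cusp is a period of $f$ against a one-variable theta series attached to the corresponding lattice in the genus of $L$ (cusps being enumerated by Lemma \ref{Classnum-Cusps}), and this vanishes by cuspidality of $f$. The $\lambda \neq 0$ contributions, after unfolding against the Whittaker expansion (\ref{f-Four}) of $f$, collapse to an archimedean integral producing the $K$-Bessel factor $K_{\sqrt{-1}r}(4\pi |\lambda|_S y)$ of (\ref{F-Fourier-exp}); organizing the resulting sum by the primitivity index $d_\lambda$ and using unimodularity (so that $L = L^\sharp$ and there are no non-trivial Weil twists to absorb) yields the divisor sum (\ref{Alambda-defn}) with exponent $\frac{N}{2} - 2$. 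This simultaneously establishes (i).

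For (ii)--(v), assume $f$ is a Hecke eigenform with local Satake parameters $\{\alpha_p, \alpha_p^{-1}\}$. The local unramified theta correspondence at $p$ is an isomorphism of spherical Hecke modules, and the induced Satake transfer can be read off by matching the Weyl-invariant polynomial rings of the dual pair: the spherical constituent $\pi_{F_f, p}$ has Satake parameters
$$\{\alpha_p^2, 1, \alpha_p^{-2}\} \sqcup \{p^{-(N/2-1)}, p^{-(N/2-2)}, \ldots, p^{N/2-2}, p^{N/2-1}\}.$$
This gives (ii) and (iii) since each local component is determined and spherical, hence irreducible. It gives (iv) once one recognizes the first triple as encoding $\Sym^2 \pi_{f,p}$ and multiplies the remaining $N-1$ shifted $\zeta$-factors. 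Finally (v) is immediate: $|p^{N/2-1}| > 1$ makes $\pi_{F_f, p}$ non-tempered (since $N \geq 8$), while $\pi_{F_f, \infty}$ has real spectral parameter $r$ and is therefore tempered. The main obstacle throughout is part (i): verifying the precise divisor sum (\ref{Alambda-defn}) requires care in tracking Weil-representation phases, in handling the orbit decomposition of $L$ by primitivity depth, and in matching normalizations between the $\SL_2$-side Whittaker function and the $\cG$-side $K$-Bessel function; once this explicit coefficient formula is in hand, everything else follows formally.
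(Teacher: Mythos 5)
The paper does not actually prove this statement: Theorem \ref{LNP-theorem} is imported wholesale from Theorems 3.1, 3.3, 4.11 and 5.6 of \cite{LNP}, so the only ``proof'' here is the citation, and your sketch is really a reconstruction of that earlier paper. At the level of strategy it is the right one for part (i): realize $F_f$ as a theta lift against a kernel built from the characteristic function of the completed lattice at the finite places, and obtain the Fourier expansion by unfolding. But one point is glossed over in a way that would sink the construction if taken literally: the archimedean Schwartz function cannot be ``the standard Gaussian, twisted.'' The Gaussian alone gives a kernel of weight $(1-(N+1))/2=-N/2$ in $\tau$, so the integrand $f(\tau)\overline{\Theta(\tau,g)}$ is not $\SL_2(\Z)$-invariant and the lift integral is not even well defined against a weight-$0$ Maass form. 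One must multiply by the specific polynomial $P(\lambda,\nu)=\exp(-\Delta/8\pi)\bigl(2^{-N/4-3}x_1^{N/2}\bigr)$ of (\ref{arch-phi}); this degree-$N/2$ factor in the positive-definite variable is what produces weight $0$, the Bessel factor $K_{\sqrt{-1}r}$, and ultimately the exponent $\tfrac N2-2$ in (\ref{Alambda-defn}). You flag the coefficient computation as the main obstacle, which is fair, but the choice of polynomial is part of the construction, not of the verification.

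Your route to (ii)--(v) is genuinely different from \cite{LNP}'s and has a circularity problem in the present context. Reading the Satake parameters of $\pi_{F_f,p}$ off the unramified local theta correspondence presupposes both that $F_f\neq 0$ and that the local components of the representation generated by $F_f$ are the local theta lifts of the $\pi_{f,p}$, i.e.\ local--global compatibility. Neither is free: nonvanishing for a general even unimodular $L$ is one of the main results of \emph{this} paper (Corollary \ref{inj-cor}), obtained from the Rallis inner product formula, and \cite{LNP} only had injectivity when the norm map on $L$ is surjective. \cite{LNP} sidesteps this by computing the Hecke eigenvalues of $F_f$ directly from the explicit formula (\ref{Alambda-defn}) for $A(\lambda)$ --- and it is precisely that direct eigenvalue computation which the present paper reuses to extend injectivity to non-eigenforms. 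Also, ``each local component is determined and spherical, hence irreducible'' inverts the logic: one must first establish irreducibility of the global representation $\pi_{F_f}$ before it factors as $\otimes'_v\pi_{F_f,v}$. That said, your Satake parameters $\{\alpha_p^2,1,\alpha_p^{-2}\}\sqcup\{p^i:\ |i|\le N/2-1\}$ are consistent with part (iv), and the tempered/non-tempered conclusions in (v) follow correctly from them.
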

Let us remark here that, in \cite{LNP}, the injectivity of the map $f \to F_f$ was proven only for those lattices $L$, which have the property that it contains vectors of length $M$ for {\it all} $M \in \Z_{>0}$. In Corollary \ref{inj-cor} below, we prove the injectivity of the map $f \to F_f$ in full generality.

\section{Adelization of the Borcherds lift}\label{adelic-Borcherdslift}
In \cite{LNP}, the automorphy of the lift $F_f$ is proved by showing that it is a Borcherds theta lift. In \cite[Section 3]{LNP} the classical construction of the theta functions and Borcherds lifts has been explained in details. The first main result of this paper is the Petersson norm for $F_f$. We wish to use the Siegel-Weil formula and the Rallis inner product formula for this. For this we first need to adelize the Borcherds lift.  The main reference for this is \cite{Ku03}, where the adelic Borcherds lift has been worked out for signature $(p, 2)$. We will do this for signature $(1, N+1)$. Recall that we have assumed that   $L$ is an even unimodular lattice. From now on, let $V_N$ be the quadratic  space of dimension $N+2$ defined over $\Q$ equipped with the quadratic form  $q_Q$ defined in Section \ref{gps-hypsp}.  Let $B_Q$ be the bilinear form corresponding to the quadratic form  $q_Q$. By $V_N(\R)$ and $V_N(\A)$ we denote the real quadaratic space and  the adelic quadratic space attached to $V_N$ respectively. The former is often denoted simply by $\R^{1,N+1}$. By $V_N(\A_f)$ we denote the space formed by the finite adeles in $V_N(\A)$. 

Let $\mathcal D$ be the Grassmanian  of positive oriented lines in the real quadratic space $V_N(\R)$.  For $(x,y) \in H_N$, set $\nu(x, y) := \frac 1{\sqrt{2}} {}^t(y + y^{-1}q_S(x), -y^{-1}x, y^{-1}) \in V_N(\R)$ satisfying $B_Q(\nu(x, y), \nu(x, y)) = 1$. Then, we can identify $H_N$ with one of the two connected components $\mathcal D^+$ of $\mathcal D$ via
\begin{equation}\label{HN=D}
H_N \ni (x, y) \rightarrow \R \cdot \nu(x, y) \in \mathcal D^+.
\end{equation}

 Denote by $X = \cG(\Q) \backslash (\mathcal D \times \cG(\A_f))/K_f$. By Lemma \ref{Classnum-Cusps},  we have $\cG(\A) = \cG(\Q) \cG(\R) K_f$. We have $X \simeq \Gamma_S \backslash \mathcal D^+$, where $\Gamma_S = \cG(\Q) \cap \cG(\R)K_f$.

%\subsection{The adelic theta series}
Let $S(V_N(\A)), S(V_N(\A_f))$ and $S(V_N(\R))$ be the space of Schwartz Bruhat  functions of $V_N(\A), V_N(\A_f)$ and $V_N(\R)$ respectively. For $\nu \in \mathcal D$, we have the map 
\begin{align*}
\iota_{\nu}:{V_N(\R)} & \to {\R \cdot \nu \oplus(\nu^{\perp},{q_S|_{{\nu}^{\perp}}})\simeq \R^{1,N+1}}\\
\lambda&\mapsto (\iota^+_\nu(\lambda), \iota^-_\nu(\lambda)).
\end{align*}
Here, $\iota^+_\nu$ and $\iota^-_{\nu}$ are the projections from $V_N(\R)$ to $\R \nu$ and $\nu^{\perp},{q_S|_{{\nu}^{\perp}}}$ respectively. For $\lambda \in V_N(\R)$, set $R(\lambda, \nu) := -2q_Q(\iota^+_\nu(\lambda))$ and $(\lambda, \lambda)_\nu := 2q_Q(\lambda) + 2R(\lambda, \nu)$. We can see that
\begin{align*}
(\lambda, \lambda)_\nu &= 2q_Q(\lambda) + 2R(\lambda, \nu) = 2q_Q(\iota^+_\nu(\lambda)) +
 2q_Q(\iota^-_\nu(\lambda)) - 4q_Q (\iota^+_\nu(\lambda)) \\
 &= 2q_Q(\iota^-_\nu(\lambda)) - 2q_Q(\iota^+_\nu(\lambda)) = 2q_Q(\lambda_{\nu^-}) - 2q_Q(\lambda_{\nu^+}).
 \end{align*}
Here we are denoting $\iota^+_\nu(\lambda) = \lambda_{\nu^+}$ and  $\iota^-_\nu(\lambda) = \lambda_{\nu^-}$. 
Let $A^\circ(\mathcal D)$ be the space of smooth functions on $\mathcal D$. 
We then introduce the Gaussian 
\begin{equation}\label{arch-gaussian-defn}
\tilde\phi_\infty(\lambda,\nu) := {\rm exp}(\pi (\lambda, \lambda)_\nu) = {\rm exp}\big(2 \pi q_Q(\lambda_{\nu^-}) - 2 \pi q_Q(\lambda_{\nu^+})\big).
\end{equation}
We can regard this as a $A^\circ(\mathcal D)$-valued Schwartz function on $V_{N}(\R)$ by
\[
V_{N}(\R)\ni\lambda\rightarrow ({\mathcal D}\ni\nu\mapsto\tilde\phi_\infty(\lambda, \nu)).
\]

Note that, in \cite{Ku03}, there is a minus sign in the exponent instead of plus sign. The reason for the difference is that we are considering signature $(1, N+1)$ instead of $(p, 2)$. Consider the polynomial $P(\lambda, \nu)$ obtained by applying the operator ${\rm exp}(-\Delta/(8 \pi))$ to the polynomial $2^{-\frac N4-3} x_1^{\frac N2}$
%\fbox{ \begin{minipage}{15cm}Can you please double check the polynomial above? We earlier had $(-1)^{n+1}2^{n-4}x_1^{n}$. In Section 4.1, we are using $-2^{\frac N2-4}x_1^{\frac N2}$. Also, the formula I have written above seems to agree with what we had in our paper with Yingkun. So, I am confused. \\
%$\Rightarrow$We can keep the constant factor just before this minipage. This would be better since we compare the adelic theta lift with the non-adelic one in the paper j.w. Yingkun. I guess that we could find some hint of a reason of the constant factor within this minipage, in our paper j.w. Siddhesh~(or influenced by the problem of the signature?), but I am not sure.\end{minipage}}
defined in \cite[Section 3.2]{LNP}, where $\Delta$ denotes the Laplacian for the $(N+2)$-dimensional Euclidean space with the coordinate $(x_1,\cdots,x_{N+2})$. For $ \lambda \in V_N(\R)$ and $\nu \in \mathcal D$, define
\begin{equation}\label{arch-phi}
\phi_\infty(\lambda, \nu) := P(\lambda, \nu) \tilde\phi_\infty(\lambda, \nu).
\end{equation}
\begin{lemma}\label{phi-inv-lem}
For all $h \in G_\infty$, we have 
$$\phi_\infty(h \lambda, h \nu) = \phi_\infty(\lambda, \nu).$$
\end{lemma}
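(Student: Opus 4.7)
The strategy is to verify invariance of each of the two factors $\tilde\phi_\infty$ and $P$ separately, using that $h\in G_\infty$ preserves $q_Q$ and the decomposition $V_N(\R)=\R\nu\oplus\nu^\perp$ equivariantly in $\nu$. For the Gaussian, I observe that $h$ sends the orthogonal decomposition $V_N(\R)=\R\nu\oplus\nu^\perp$ to $V_N(\R)=\R(h\nu)\oplus(h\nu)^\perp$; concretely, $h$ intertwines with the projections in the sense $(h\lambda)_{(h\nu)^\pm}=h(\lambda_{\nu^\pm})$, and since $q_Q$ is $h$-invariant we obtain $q_Q((h\lambda)_{(h\nu)^\pm})=q_Q(\lambda_{\nu^\pm})$. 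Hence $(h\lambda,h\lambda)_{h\nu}=(\lambda,\lambda)_\nu$ and $\tilde\phi_\infty(h\lambda,h\nu)=\tilde\phi_\infty(\lambda,\nu)$.

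For the polynomial factor $P(\lambda,\nu)$, I would first make its $\nu$-dependence precise, writing $\Delta_\nu$ and $x_1(\cdot;\nu)$ to emphasize it. The first coordinate $x_1(\cdot;\nu)$ is the projection onto $\R\nu$, which can be written as $x_1(\lambda;\nu)=B_Q(\lambda,\nu)$ since $B_Q(\nu,\nu)=1$; bilinearity of $B_Q$ and the $B_Q$-invariance of $h$ then give $x_1(h\lambda;h\nu)=x_1(\lambda;\nu)$. The Laplacian $\Delta_\nu$ is intrinsic to the positive definite majorant $-(\cdot,\cdot)_\nu$, and the previous equivariance of the decomposition shows that this majorant transforms as $-(h\lambda,h\lambda)_{h\nu}=-(\lambda,\lambda)_\nu$, so an orthonormal basis $\{e_i(\nu)\}$ for the majorant at $\nu$ is carried by $h$ to an orthonormal basis $\{h\, e_i(\nu)\}$ at $h\nu$. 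A direct chain-rule computation then yields the intertwining relation $\Delta_{h\nu}\circ L_h=L_h\circ\Delta_\nu$, where $(L_hF)(\mu):=F(h^{-1}\mu)$, and the same relation persists after applying the heat operator $\exp(-\,\cdot\,/(8\pi))$.

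Combining these pieces, set $F_\nu(\lambda):=2^{-N/4-3}\,x_1(\lambda;\nu)^{N/2}$, so that the identity $x_1(h\lambda;h\nu)=x_1(\lambda;\nu)$ translates into $F_{h\nu}=L_hF_\nu$. Applying the intertwining of Laplacians gives $\exp(-\Delta_{h\nu}/(8\pi))F_{h\nu}=L_h\exp(-\Delta_\nu/(8\pi))F_\nu$, and evaluating at $h\lambda$ yields $P(h\lambda,h\nu)=P(\lambda,\nu)$. Multiplying by the corresponding identity for $\tilde\phi_\infty$ proves the lemma. I expect the equivariance of the Laplacian under the change of base point $\nu$ to require the most care, since $\Delta$ depends implicitly on $\nu$ through the majorant; once this is set up cleanly the argument reduces to formal manipulation with pullback operators.
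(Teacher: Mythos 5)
Your proof is correct and rests on the same mechanism as the paper's: the paper disposes of the lemma in one line by noting that both $P$ and $\tilde\phi_\infty$ depend only on $q_Q(\lambda)$ and $B_Q(\lambda,\nu)$, which $G_\infty$ preserves, and your argument simply makes this explicit via the equivariance of the projections $\lambda\mapsto\lambda_{\nu^\pm}$ and of the majorant Laplacian. The extra care you take with the $\nu$-dependence of $\Delta$ is a legitimate filling-in of detail rather than a different route.
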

\begin{proof}
The lemma follows from the observation that both $P$ and $\tilde\phi_\infty$ depend only on $q_Q(\lambda)$ and $B_Q(\lambda, \nu)$, and by definition $G_\infty$ preserves this.
\end{proof}
The group $\SL_2(\A)$ acts on $S(V_N(\A))$ via the Weil representation $\omega$ determined by the standard additive character $\psi$ on $\A/\Q$ such that $\psi_\infty(x) = {\rm exp}(2 \pi \sqrt{-1} x)$. If $\Phi \in S(V_N(\A))$, then the Weil representation is given by
\begin{equation}\label{Explicit-WeilAct}
\omega(\mat{a}{}{}{a^{-1}})\Phi(x) = |a|^{\frac N2+1}\Phi(ax), \quad \omega(\mat{1}{t}{}{1})\Phi(x) = \psi(tq_Q(x))\Phi(x), \quad \omega(\mat{}{1}{-1}{})\Phi(x) = \hat{\Phi}(x),
\end{equation}
where  $\hat{\Phi}$ denotes the Fourier transform of $\Phi$ with respect to a self-dual measure of $V_N(\A)$ with respect to $V_N(\A)^2\ni (x,y)\mapsto \psi(B_Q(x,y))\in\C^{(1)}$. In the first equation, we need to add a factor $\chi_{q_Q}(a):=\langle (-1)^{\frac N2 + 1}\det(Q),a\rangle$ on the right hand side. But, note that $\det(Q) = -1$ and $\frac N2+1$ is odd. Hence,  $\chi_{q_Q}(a) = 1$ for all $a$.

By $\omega_v$ we denote the $v$-component of $\omega$ at a place $v$. Let  $\hat{\Phi}$ denote the Fourier transform of $\Phi\in S(V_N(\Q_v))$ with respect to a self-dual measure of $V_N(\Q_v)$ with respect to $V_N(\Q_v)^2\ni(x,y)\mapsto \psi_v(B_Q(x,y))\in\C^{(1)}$. Over the Schwartz Bruhat space $S(V_N(\Q_v))$ at $v$ we also provide a description of $\omega_v$ as follows:
\begin{equation}\label{Explicit-WeilAct-local}
 \omega_v(\mat{a}{}{}{a^{-1}})\Phi(x) = |a|_v^{\frac N2+1}\Phi(ax), \quad \omega(\mat{1}{t}{}{1})\Phi(x) = \psi_v(tq_Q(x))\Phi(x), \quad \omega(\mat{}{1}{-1}{})\Phi(x) = \hat{\Phi}(x).
\end{equation}
In general, we have to add a factor $\gamma_{q_Q,v}^{-1}(1)$, which denotes the local constant called the Weil constant, to the right hand side of the last equation above. But in our case, since $L_p$ is self-dual for all $p<\infty$ and dimenstion of $V_N$ is $N+2$, we see that $\gamma_{q_Q,v}(1) = 1$ for all places $v$ (See \cite[Chapitre II]{We} for details).

Suppose $\tau = u+\sqrt{-1}v$ lies in the complex upper half plane, and let $g_\tau = \mat{1}{u}{}{1} \mat{v^{1/2}}{}{}{v^{-1/2}} \in \SL_2(\R)$. Then 
\begin{align}\label{omega-action}
\big(\omega(g_\tau) \tilde\phi_\infty\big)(\lambda, \nu) &= v^{\frac N4+\frac 12}{\rm exp}(2 \pi \sqrt{-1} u q_Q(\lambda)) \tilde\phi_\infty(\sqrt{v}\lambda, \nu) \nonumber\\
&= v^{\frac N4+\frac 12}{\rm exp}(2 \pi \sqrt{-1} u \big(q_Q(\lambda_{\nu^+})+q_Q(\lambda_{\nu^-})\big))   {\rm exp}\big(2 \pi v q_Q(\lambda_{\nu^-}) - 2 \pi v q_Q(\lambda_{\nu^+})\big) \nonumber\\
&= v^{\frac N4+\frac 12}{\rm exp}(2 \pi \sqrt{-1} \big(\tau q_Q(\lambda_{\nu^+}) + \bar\tau q_Q(\lambda_{\nu^-})\big)).
\end{align}

The $\SL_2(\A)$ action commutes with the action of $\cG(\A)$, which we denote by $\omega(h) \Phi(x) := \Phi(h^{-1}x)$. For $\nu \in \mathcal D, h \in \cG(\A_f)$ and $g \in \SL_2(\A)$, let $\theta(g, \nu, h)$ be the linear functional on $S(V_N(\A_f))$ defined by
\begin{equation}\label{theta-functional-defn}
S(V_N(\A_f)) \ni \phi \mapsto \theta(g, \nu, h; \phi) := \sum\limits_{\lambda \in V_N(\Q)} \omega(g)\Big(\phi_\infty(\cdot, \nu) \otimes \omega(h)\phi\Big)(\lambda).
\end{equation}

%The $v$ inside the summation depends on $g$ as follows. Using $\SL_2(\A) = \SL_2(\Q) \SL_2(\R) \prod_{\ell < \infty}$, write $g = g_0 g_\infty k$. Then write $g_\infty \in \SL_2(\R)$ as $g_\infty = \mat{1}{u}{}{1} \mat{v^{1/2}}{}{}{v^{-1/2}} k'$, with $u \in \R, v \in \R_{>0}$ and $k' \in \SO(2, \R)$. 

\begin{lemma}
Let $h_0 \in \cG(\Q)$ and $g_0 \in \SL_2(\Q)$. We have 
\begin{equation}\label{theta-left-inv}
\theta(g, h_0 \nu, h_0 h; \phi) = \theta(g, \nu, h; \phi), \qquad \theta(g_0g, \nu, h; \phi) = \theta(g, \nu, h; \phi).
\end{equation}
\end{lemma}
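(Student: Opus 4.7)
The plan is to verify the two equations separately, with the first being a direct consequence of Lemma \ref{phi-inv-lem} and the commutativity of the dual pair actions, and the second being the classical left $\SL_2(\Q)$-invariance of a theta kernel attached to a Schwartz function on $V_N(\A)$.

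For the first equation, I would package the Schwartz function $\Phi := \phi_\infty(\cdot,\nu) \otimes \omega(h)\phi \in S(V_N(\A))$ and show that replacing $(\nu,h)$ by $(h_0\nu, h_0 h)$ has the effect of replacing $\Phi$ by its $\cG(\A)$-translate $\omega(h_0)\Phi$, where $\omega(h_0)\Psi(x) := \Psi(h_0^{-1}x)$. Indeed, $h_0 \in \cG(\Q) \subset G_\infty$, so Lemma \ref{phi-inv-lem} gives $\phi_\infty(\lambda, h_0\nu) = \phi_\infty(h_0^{-1}\lambda, \nu)$, while on the finite part $\omega(h_0 h)\phi = \omega(h_0)\omega(h)\phi$ by definition. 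Since the $\cG(\A)$-action on $S(V_N(\A))$ commutes with the Weil representation of $\SL_2(\A)$, we get $\omega(g)\bigl(\omega(h_0)\Phi\bigr) = \omega(h_0)\bigl(\omega(g)\Phi\bigr)$, and summing over $\lambda \in V_N(\Q)$ and applying the change of variables $\lambda \mapsto h_0\lambda$ (which preserves $V_N(\Q)$ since $h_0 \in \cG(\Q)$) recovers $\theta(g,\nu,h;\phi)$.

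For the second equation, the point is that for a fixed $\Phi \in S(V_N(\A))$, the theta series $g \mapsto \sum_{\lambda \in V_N(\Q)} \bigl[\omega(g)\Phi\bigr](\lambda)$ is left $\SL_2(\Q)$-invariant. This is the standard statement due to Weil, proved by checking invariance on the generators of $\SL_2(\Q)$ (upper unipotents, diagonals, and the Weyl element): the upper-unipotent case uses that $q_Q$ takes values in $\Z$ on $V_N(\Q)$ together with $\psi|_{\Q} = 1$, the diagonal case uses the triviality of the character $\chi_{q_Q}$ (explicitly noted in the text since $\det(Q) = -1$ and $\frac{N}{2}+1$ is odd), and the Weyl element case is Poisson summation, which works without a Weil constant here because $\gamma_{q_Q,v}(1) = 1$ at every place (again noted in the text, using that $L_p$ is self-dual for all $p<\infty$).

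There is no real obstacle: the first equation is a formal manipulation once Lemma \ref{phi-inv-lem} is in hand, and the second is a classical consequence of Weil's construction, with the preceding paragraphs having already verified that all the Weil constants and characters that could have obstructed $\SL_2(\Q)$-invariance are trivial in our setup. The only thing to be careful about is bookkeeping the distinction between the archimedean part (where $\phi_\infty$ depends on $\nu$) and the non-archimedean part (where $\omega(h)$ acts on $\phi$), which is handled uniformly by viewing everything as a single Schwartz function on $V_N(\A)$.
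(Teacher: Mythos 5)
Your proposal is correct and follows essentially the same route as the paper: the first identity via Lemma \ref{phi-inv-lem} together with the change of variables $\lambda \mapsto h_0\lambda$, and the second by checking the three generators of $\SL_2(\Q)$ (unipotent via $\psi|_{\Q}\equiv 1$, diagonal via a change of variable, Weyl element via Poisson summation). The extra bookkeeping you supply about the triviality of $\chi_{q_Q}$ and the Weil constants is consistent with what the paper establishes in the surrounding discussion.
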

\begin{proof}
The statement for $h_0$ follows from Lemma \ref{phi-inv-lem}, and a change of variable. For the statement of $g_0$, we can look at the three cases. For $g_0 = \mat{1}{t}{}{1}$, the result follows from $\psi|_\Q \equiv 1$. For $g_0 = \mat{a}{}{}{a^{-1}}$, we get the result from a change of variable $\lambda \mapsto a \lambda$. For $g_0 = \mat{}{1}{-1}{}$, the result is obtained by Poisson summation.
\end{proof}
If $g_1 \in \SL_2(\A_f)$ and $h_1 \in \cG(\A_f)$, then we have
\begin{equation}\label{theta-right-inv}
\theta(gg_1, \nu, hh_1; \phi) = \theta(g, \nu, h; \omega(g_1, h_1)\phi),
\end{equation}
with $\omega(g_1,h_1)$ meaning $\omega(g_1)\omega(h_1)$. 
Hence, if $\phi \in S(V_N(\A_f))^{K_f}$, then the map 
$$(\nu, h) \mapsto \theta(g, \nu, h; \phi)$$
on $\mathcal D \times \cG(\A_f)$ descends to a function on $X = \Gamma_S \backslash \mathcal D^+$. We may view it as a linear functional on $S(V_N(\A_f))^{K_f}$, and obtain
$$\theta : \SL_2(\Q) \backslash \SL_2(\A) \times X \rightarrow \Big(S(V_N(\A_f))^{K_f}\Big)^\vee \qquad \qquad (g, \nu, h) \mapsto \theta(g, \nu, h; \cdot).$$
Since we have assumed that $L$ is even unimodular, it is self dual, and in this case $S(V_N(\A_f))^{K_f}$ is a one dimensional space spanned by the characteristic function $\phi_0$ of $\otimes_{p<\infty}L_{0,p}$.

% Let $\mu \in D$ and let $\phi_\mu$ correspond to the characteristic function of $\mu +L$. Take $\tau$ in the complex upper half plane, and $g_\tau \in \SL_2(\R)$ as above. Using (\ref{omega-action}), we see that 
%$$\theta(g_\tau, \nu, 1; \phi_\mu) = \theta_\mu(\tau, \nu, p),$$
%with $\theta_\mu(\tau, \nu, p)$ defined in (\ref{classical-theta-defn}). 

%\subsection{The adelic pairing}
Let $f \in  S(\SL_2(\Z);-\frac{r^2 + 1}{4})$ be as in Theorem \ref{LNP-theorem}. Write $g \in \SL_2(\A)$ as $g = \gamma g_\infty k$ with $\gamma \in \SL_2(\Q), g_\infty \in \SL_2(\R)$ and $k\in \prod_\ell \SL_2(\Z_\ell)$. Define $f_0 : \SL_2(\A) \to \C$ be $f_0(g) := f(g_\infty \langle \sqrt{-1} \rangle)$.

%
%Let $f \in S(\Gamma_0(p), \chi_D, r)$. In Section \ref{scalar-to-vector-sec}, we constructed the $\C(D)$-valued function $\mathcal L_D(f)$ on $\HH$ which is a modular form for the Weil representation $\rho_D$ of $\SL_2(\Z)$. 
%%In Proposition \ref{LD(f)-exp}, we obtain the formula $\mathcal L_D(f) = \sum_{\mu \in D} f_\mu e_\mu$. 
%This allows us to obtain a map from $\HH$ to  $S(V_5(\A_f))^{K_f}$, and let us again call it $\mathcal L_D(f)$. Define $F : \SL_2(\Q) \backslash \SL_2(\A) \rightarrow  S(V_5(\A_f))^{K_f}$ as follows. Write $g \in \SL_2(\A)$ as $g = \gamma g_\infty k$ with $\gamma \in \SL_2(\Q), g_\infty \in \SL_2(\R)$ and $k\in \prod_\ell \SL_2(\Z_\ell)$. Now define
%$$F(g) := ({\rm Im}(g_\infty \langle i \rangle))^{\frac{1+2n}{2}} \omega(k)^{-1} \mathcal L_D(f) (g_\infty \langle i \rangle).$$
%To check that this is well-defined, notice that the Weil representation $\rho_D$ of $\SL_2(\Z)$ agrees with the Weil representation $\omega$ on $\prod_\ell \SL_2(\Z_\ell)$.

As a function of $g  \in \SL_2(\A)$, the function $g \to f_0(g) \bar\theta(g, \nu, h; \phi_0)$
%$\C$-bilinear pairing 
%\begin{equation}\label{adelic-pairing}
%\langle F(g), \overline{\theta}(g, \nu, h)\rangle  := \overline{\theta}(g, \nu, h; F(g))
%\end{equation}
is left-$\SL_2(\Q)$ invariant and right-$\SO(2, \R) \prod_\ell \SL_2(\Z_\ell)$ invariant using (\ref{theta-left-inv}), (\ref{theta-right-inv}) and $\hat{\phi_0} = \phi_0$ since $L$ is self-dual. This allows us to define the adelic Borcherds lift on $X$ by
\begin{equation}\label{adelic-Borcherds-lift}
\Phi(\nu, h, f_0) := \int\limits_{\SL_2(\Q) \backslash \SL_2(\A)} f_0(g) \overline{\theta}(g, \nu, h; \phi_0)  dg.
\end{equation}

\begin{proposition}\label{class-adelic-prop}
For $(x, y) \in H_N$, let $\nu(x,y) \in \mathcal D^+$ by (\ref{HN=D}).  Then we have
$$\Phi(\nu(x,y), 1, f_0) = F_f(n(x)a_y). 
%\text{ and } ||F_f||_2 = ||\Phi(*,*,f_0)||.
$$
\end{proposition}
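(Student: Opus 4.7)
The plan is to compare the two expressions by reducing the adelic integral on the left to the classical integral representation of the Borcherds theta lift used in \cite{LNP}, and recognising that the integrands match.

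First, I would apply strong approximation for $\SL_2$: $\SL_2(\A) = \SL_2(\Q) \cdot (\SL_2(\R) \times K_f^{\SL_2})$ with $K_f^{\SL_2} := \prod_\ell \SL_2(\Z_\ell)$, and $\SL_2(\Q) \cap (\SL_2(\R) \times K_f^{\SL_2}) = \SL_2(\Z)$. Using the right-$K_f^{\SL_2}$-invariance of $\bar\theta(g,\nu,1;\phi_0)$ (which follows from (\ref{theta-right-inv}) together with $\omega(k)\phi_0 = \phi_0$ for $k \in K_f^{\SL_2}$, since $L_0$ is self-dual so $\hat\phi_0 = \phi_0$) and the right-$\SO(2,\R)$-invariance at the archimedean place, I would unfold (\ref{adelic-Borcherds-lift}) to
\[
\Phi(\nu,1,f_0) \;=\; \int_{\SL_2(\Z) \backslash \mathfrak h} f(\tau)\, \overline{\theta(g_\tau, \nu, 1; \phi_0)}\, \frac{du\,dv}{v^2},
\]
with $g_\tau = n(u) a_{v^{1/2}}$ and $\tau = u + \sqrt{-1}v$, after absorbing the volume normalisation and using $f_0(g) = f(g_\infty\langle \sqrt{-1}\rangle)$.

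Next, I would compute the theta kernel explicitly. At the finite places $\omega(1)\phi_0 = \phi_0$ is the characteristic function of $\widehat{L_0}$, so only $\lambda \in V_N(\Q) \cap L_0 \otimes \widehat{\Z} = L_0$ survive the sum (\ref{theta-functional-defn}). At the archimedean place I would apply (\ref{omega-action}) to obtain
\[
\omega(g_\tau)\tilde\phi_\infty(\lambda,\nu) = v^{\frac N4 + \frac 12} \exp\!\bigl(2\pi \sqrt{-1}\,\tau\, q_Q(\lambda_{\nu^+}) + 2\pi\sqrt{-1}\,\bar\tau\, q_Q(\lambda_{\nu^-})\bigr),
\]
and couple this with the polynomial $P(\lambda,\nu)$ of (\ref{arch-phi}). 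Since $P$ is the image of $2^{-N/4-3} x_1^{N/2}$ under $\exp(-\Delta/8\pi)$, a short computation shows $P$ is a (homogeneous) polynomial in $\lambda_{\nu^+}$ (suitably normalized), which is exactly the polynomial appearing in the classical archimedean factor of the theta function used in \cite[\S 3]{LNP}. Assembling these ingredients, the integrand above becomes the classical Borcherds theta kernel (on $L_0 = L \oplus \Z^2$) paired with $f(\tau)$.

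At this point the right-hand side of the unfolded integral coincides with the classical Borcherds lift formula of \cite{LNP} evaluated at the point in $\mathcal D^+$ corresponding to $(x,y)$ under (\ref{HN=D}); by definition of $F_f$ in Theorem \ref{LNP-theorem}, this equals $F_f(n(x) a_y)$. I expect the main obstacle to be purely bookkeeping: matching the normalisations of the archimedean Gaussian and the polynomial $P(\lambda,\nu)$ (including the factor $v^{N/4+1/2}$ from (\ref{omega-action}), the measure $v^{-2}du\,dv$, and the $(-\Delta/8\pi)$ normalisation) against the precise classical theta kernel of \cite[\S 3]{LNP}, and verifying that the sign conventions of $\iota_\nu^\pm$ (due to the switch from signature $(p,2)$ in \cite{Ku03} to signature $(1,N+1)$ here) produce the correct $\bar\tau$-holomorphic factor after complex conjugation of $\theta$.
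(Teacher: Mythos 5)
Your proposal is correct and follows essentially the same route as the paper: the paper's proof is simply the citation ``This follows from Section 3 of \cite{LNP}'', and your unfolding via strong approximation, the self-duality of $L_0$ (so that only $\lambda \in L_0$ survive and $\phi_0$ is $K_f$-fixed), and the identification of the archimedean kernel with the classical theta kernel of \cite[\S 3]{LNP} is exactly the content that citation is invoking. The only caveat is one of phrasing: the identification of the resulting classical theta integral with the Fourier expansion defining $F_f$ is itself a theorem of \cite{LNP}, not a definition, but that is precisely what the cited section establishes.
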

\begin{proof}
This follows from Section 3 of \cite{LNP}.
\end{proof}

%\fbox{Should we give definition of the classical and adelic Petersson norm before this proposition?}\\
%\fbox{$\Rightarrow$Both of them are written down around the beginning of the next section, how about this?}

%\fbox{\begin{minipage}{.9 \linewidth}
%Maybe a proposition here saying that the adelic Borcherds lift is the adelization of the classical Borcherds lift. And also mentioning that the Petersson norm of $F_f$ is the same as the norm of $\Phi$ \end{minipage}}

\section{Petersson norm of the theta lifting}\label{PeterssonNormsec}
We study the Petersson norm 
\[
||\Phi(*,*,f_0)||^2:=\displaystyle\int_{\cG(\Q)\backslash \cG(\A)}\Phi(\nu,h,f_0)\overline{\Phi(\nu,h,f_0)}d\nu dh
\]
of $\Phi(\nu,h, f_0)$ by the well known approach originally due to S. Rallis~(cf.~\cite{Ra}). In view of Lemma \ref{Classnum-Cusps} and Proposition \ref{class-adelic-prop} 
this coincides with 
\[
||F_f||_2^2:=\displaystyle\int_{\Gamma_S\backslash G_{\infty}}F_f(h_{\infty})\overline{F_f(h_{\infty})}dh_{\infty},
\]
which is nothing but the Petersson norm of our lift in the classical setting. This is convergent since $\Phi(\nu, h, f_0)$ is cuspidal. \\
%\fbox{``Or it is justified by Weil's criterion on the convergent range of the theta integral'' can be ``comment out''.} \\
We have that  
\begin{align}
||\Phi(*,*,f_0)||^2 &=\displaystyle\int_{\cG(\Q)\backslash \cG(\A)}\displaystyle\int_{(\SL_2(\Q)\backslash \SL_2(\A))^2}f_0(g_1) \overline{\theta}(g_1, \nu, h; \phi_0)\overline{f_0(g_2) \overline{\theta}(g_2, \nu, h;\phi_0)}dg_1dg_2d\nu dh \nonumber\\
&= \displaystyle\int_{(\SL_2(\Q)\backslash \SL_2(\A))^2} f_0(g_1) \overline{f_0(g_2)} I(g_1, g_2, \phi_0) dg_1 dg_2, \label{Petersson-norm-eqn}
\end{align}
where
$$I(g_1, g_2, \phi_0) := \displaystyle\int_{\cG(\Q)\backslash \cG(\A)}  \overline{\theta}(g_1, \nu, h; \phi_0)  \theta(g_2, \nu, h; \phi_0) d\nu dh.$$

Since we are in the convergent range, the change in order of integration is justified. In view of the doubling variables of the Weil representation~(cf.~\cite[Section 11]{Hw}) we have
\begin{equation}\label{double-theta}
\bar{\theta}(g_1,\nu,h,\phi_{0})\theta(g_2,\nu,h,\phi_{0})=\theta((g_1, g_2),\delta(\nu),\delta(h),\phi_{0}\otimes\phi_{0}).
\end{equation}
%\fbox{What is $\delta(\nu), \delta(h)$?$\Rightarrow$See below}

Here, for $g_1,~g_2\in \SL_2$ we regard $(g_1,g_2)\in \SL_2\times \SL_2$ as its image of the diagonal embedding $\SL_2\times \SL_2\hookrightarrow \Sp_4$ given by 
$$\SL_2 \times \SL_2 \ni (\mat{a}{b}{c}{d}, \mat{a'}{b'}{c'}{d'}) \mapsto \begin{bmatrix}a&&-b&\\&a'&&b'\\-c&&d&\\&c'&&d'\end{bmatrix} \in \Sp_4,$$
%with $g^*:=
%\begin{bmatrix}
%a & -b\\
%-c & d
%\end{bmatrix}$ for $g=
%\begin{bmatrix}
%a & b\\
%c & d
%\end{bmatrix}$. 
and the map $\delta$ denotes the canonical diagonal embedding of $\cG$ into the orthogonal group defined by the quadratic space $(\Q^{N+2}\oplus\Q^{N+2}, q_Q\oplus q_Q)$, for which note that $\Sp_4\times\delta(\cG)$ forms a dual pair. 
 
On the right hand side of (\ref{double-theta}), the theta series is a linear functional on $S(V_N(\A)) \oplus S(V_N(\A))$ with the choice of Schwartz function being
  $\Phi_{0}:=(\phi_{\infty}\oplus\phi_{\infty})\otimes(\phi_{0}\oplus\phi_{0})$.
%   \fbox{where $\Phi_{\infty}$ is defined as}
%\begin{equation}\label{Schwarts-fct}
%\underline{\Phi_{\infty}(x):=\exp(-\frac{\Delta}{8\pi}((-1)^{n+1}2^{n-4}x_1^n))\exp(-2\pi Q_S(x))\quad(x=(x_1,\cdots,x_{2+2n})\in\R^{2+2n})}
%\end{equation}
%with the quadratic form $Q_S$ attached to the majorant $S$ of $A$. We also provide another chioce of the archimedean test function $\Phi_{\infty}$ as follows:
%\begin{equation}\label{Schwarts-fct-2}
%\underline{\Phi_{\infty}(x):=P_k(x)\exp(-2\pi Q_S(x))\quad(x=(x_1,\cdots,x_{2+2n})\in\R^{2+2n})},
%\end{equation}
%where $P_k(x)$ denotes a harmonic polynomial of degree $k$ given in \cite[(2.1)]{Od}~(see \cite[(2.14)]{Od} for a more specified choice).

By the convergent Siegel-Weil formulas \cite[Th{\'e}or{\`e}me 5]{We2}, the integral $I(g_1, g_2, \phi_0)$ can be written as a special value of a certain Siegel Eisenstein series. Let us describe this next. Let $P(\A)=N(\A)M(\A)$ be the adelized Siegel parabolic subgroup of $\Sp_4(\A)$ with the Levi part $M(\A)\simeq \GL_{2}(\A)$ and the unipotent radical $N(\A)\simeq {\rm Sym}_{2}(\A)$, and let $\tilde{K}$ be the standard maximal compact sugroup $\prod_{p<\infty}\Sp_{4}(\Z_p)\times (\OO(4)(\R)\cap \Sp_{4}(\R))$.  The Weil representation $\omega^D$ of $\Sp_4(\A)$ on $S(V_N(\A)^2)$, obtained by the doubling of $\omega$, is realized as follows:
\begin{align}\label{Weil-Doubling}
\omega^D(
\mat
{A}{0_2}{0_2}{{}^tA^{-1}}
)\Phi(X)&=
%\chi_{q_Q}(\det(A))
|\det(A)|^{\frac N2+1}\Phi(XA)~(A\in \GL_2(\A)), \nonumber\\
\quad\omega^D(
\mat{1_2}{Y}{0_2}{1_2})\Phi(X)&=\psi(\frac{1}{2}\trace((X,X)\cdot Y))\Phi(X)~(Y\in{\rm Sym_2(\A)}),\nonumber\\
\omega^D(
\mat{0_2}{1_2}{-1_2}{0_2})\Phi(X)&=\hat{\Phi}(X),
\end{align} 
where $\Phi$ denotes a Schwarts Bruhat function on $V_N(\A) \oplus V_N(\A)$, and $(X, X):= (\frac 12 {}^tx_iQx_j) \in {\rm Sym}_2(\A)$ for $X = (x_1, x_2) \in V_N(\A)^2$.  
For the first formula we note that there is a factor $\chi_{q_Q}(\det(A))$, which is proved to be trivial.

%\fbox{Have to figure out the unipotent action accurately. Also, is the character $\chi$ trivial?$\Rightarrow$Yes, it's trivial.}

We now also provide local representations $\omega^D_v$ of $\omega^D$ at each $v\le\infty$. With the same notation for $\omega_v$
\begin{align}\label{Weil-Doubling-local}
\omega^D_v(
\mat
{A}{0_2}{0_2}{{}^tA^{-1}}
)\Phi(X)&=
%\chi_{q_{Q,v}}(\det(A))
|\det(A)|^{\frac N2+1}\Phi(XA)~(A\in \GL_2(\Q_v)),\nonumber\\
\quad\omega^D_v(
\mat{1_2}{Y}{0_2}{1_2})\Phi(X)&=\psi(\frac{1}{2}\trace((X, X) \cdot Y))\Phi(X)~(Y\in{\rm Sym_2(\Q_v)}),\nonumber\\
\omega^D_v(
\mat{0_2}{1_2}{-1_2}{0_2})\Phi(X)&=
%\gamma_{q_Q,v}(1)^{-2}
\hat{\Phi}(X),
\end{align} 
where $\Phi$ denotes a Schwarts Bruhat function on $V_N(\Q_v) \oplus V_N(\Q_v)$, and $(X, X):= (\frac 12 {}^tx_iQx_j) \in {\rm Sym}_2(\Q_v)$ for $X = (x_1, x_2) \in V_N(\Q_v)^2$. For the formula above we remark that there is the factor $\chi_{q_{Q,v}}(\det(A))$~(resp.\\
$\gamma_{q_Q,v}(1)^{-2}$) in the first formula~(resp.~third formula), which turn out to be trivial.

%\fbox{Are $\chi_{q_Q}$ and $\gamma_{q_Q,v}$ equal to $1$? Should we mention that here?$\Rightarrow$How about the modification above?}
% \fbox{Is the exponent of the Weil constant $-4$?$\Rightarrow$It is $-2$. In fact, $\omega^D_v(\mat{0_2}{1_2}{-1_2}{0_2})=\omega(\mat{}{1}{-1}{})\otimes\omega(\mat{}{1}{-1}{})$}
% 

We have the Iwasawa decomposition $\Sp_{4}(\A)=P(\A) \tilde{K}$. Write any $g \in \Sp_4(\A)$ as $g = n m(a) k$ with some $n=
\begin{pmatrix}
1_2 & X\\
0_2 & 1_2
\end{pmatrix}~(X\in\Sym_2(\A)), m(a)=
\begin{pmatrix}
a & 0_2\\
0_2 & {}^ta^{-1}
\end{pmatrix}~(a\in \GL_2(\A))$ and $k\in \tilde{K}$. We then set $|a(g)| := |\det(a)|_\A$. This is a well-defined function on $\Sp_4(\A)$ that is left $N(\A)M(\Q)$-invariant and right $\tilde{K}$-invariant. For $s \in \C$ and $\Phi \in S(V_N(\A)^2)$, define $\Xi(g, s) := \omega^D(g)\Phi(0) \cdot |a(g)|^{s-s_0}$, where $s_0 = (N-1)/2$. From (\ref{Weil-Doubling}), we see that $\omega^D(nm(a)g)\Phi(0) = |\det(a)|^{\frac N2+1}\omega^D(g) \Phi(0)$. Recall that the modular character on the Siegel parabolic is $\delta_P(p) = |a(p)|^{3}$. Hence, we get 
$$\Xi(pg, s) = |a(p)|^s\delta_P(p)^{\frac 12} \Xi(g,s) \text{ for } p \in P(\A), g \in \Sp_4(\A).$$
Hence we can conclude that $\Xi \in {\rm Ind}_{P(\A)}^{\Sp_4(\A)}(\delta_P^{\frac s3})$, which is the normalized parabolic induction. For any $F \in {\rm Ind}_{P(\A)}^{\Sp_4(\A)}(\delta_P^{\frac s3})$, the Siegel Eisenstein series is defined by 
$$E(g, s; F) := \sum\limits_{\gamma \in P(\Q) \backslash \Sp_4(\Q)} F(\gamma g, s),$$
which converges absolutely for ${\rm Re}(s) > 3/2$~(cf.~\cite[Th{\'e}or{\`e}me 1]{We2}). 

%\fbox{Need specific reference for the proposition below~(and above)$\Rightarrow$Refer to Weil's reference}\\
We now state the Siegel Weil formula for our case as follows~(cf.~\cite[Th{\'e}or{\`e}me 5]{We2}):
\begin{proposition}
Let $\Phi_0 \in S(V_N(\A)) \oplus S(V_N(\A))$ be defined by $(\phi_\infty \oplus \phi_\infty) \otimes (\phi_0 \oplus \phi_0)$ as above. For $s \in \C, g \in \Sp_4(\A)$, let $\Xi_0(g, s) := \omega^D(g)\Phi_0(0) |a(g)|^{s-s_0}$, where $s_0 = (N-1)/2$. Then, for $g_1, g_2 \in \SL_2(\A)$, we have
\begin{equation}\label{SW-conv}
I(g_1, g_2, \phi_0) = E((g_1, g_2), s_0; \Xi_0).
\end{equation}
\end{proposition}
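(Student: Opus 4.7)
The plan is to apply Weil's convergent Siegel--Weil formula \cite[Th\'eor\`eme 5]{We2} after recognizing, via the doubling identity, that $I(g_1,g_2,\phi_0)$ is precisely a theta integral for the dual pair $(\Sp_4,\cG)$ associated to the doubled Schwartz function $\Phi_0$. The formula then identifies this with the desired Siegel Eisenstein series, provided we check that we are in the absolutely convergent range and that the induced section matches $\Xi_0$.

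The first step is to insert the doubling identity (\ref{double-theta}) into the definition of $I(g_1,g_2,\phi_0)$, giving
\[
I(g_1,g_2,\phi_0)=\int_{\cG(\Q)\backslash\cG(\A)}\theta\bigl((g_1,g_2),\delta(\nu),\delta(h);\Phi_0\bigr)\,d\nu\,dh.
\]
Since $\cG(\A)=\cG(\Q)G_\infty K_f$ and $\Phi_0$ is $\delta(K_f)$-invariant, the integrand descends to a function on $\delta(\cG(\Q))\backslash\delta(\cG(\A))$ and the integral is just the standard regularization-free orthogonal theta integral for $\Phi_0$ with $(\Sp_4,\cG)$ viewed as a dual pair in the doubled symplectic space.

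Next I would verify that Weil's convergence hypotheses are satisfied. The quadratic space $V_N$ has dimension $N+2$ with $N\ge 8$, and since $L$ is definite and $J$ contributes a single hyperbolic plane, the Witt index of $V_N$ equals $1$. Consequently $\dim V_N - 2\cdot(\text{Witt index})=N\ge 8>r+1=3$, which is Weil's condition for the convergence of both the theta integral and the Siegel Eisenstein series on $\Sp_{2r}=\Sp_4$ at the relevant point. The evaluation point $s_0=(N-1)/2$ is the standard Siegel--Weil point $(m-r-1)/2$ for $m=N+2$, $r=2$, and lies strictly to the right of the abscissa of absolute convergence $3/2$ for the Eisenstein series.

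With these checks in place, Weil's theorem yields
\[
\int_{\cG(\Q)\backslash\cG(\A)}\theta\bigl((g_1,g_2),\delta(\nu),\delta(h);\Phi_0\bigr)\,d\nu\,dh=E\bigl((g_1,g_2),s_0;\Xi'\bigr),
\]
where $\Xi'(g,s)=\omega^D(g)\Phi_0(0)\,|a(g)|^{s-s_0}$ is the standard section of $\mathrm{Ind}_{P(\A)}^{\Sp_4(\A)}(\delta_P^{s/3})$ attached to $\Phi_0$. Comparing with the definition of $\Xi_0$ in the statement shows $\Xi'=\Xi_0$, completing the identification.

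The main delicate point is making sure the normalization constant is exactly $1$: Weil's formula carries an overall factor that depends on the choice of Haar measures and on whether one works with $\OO$ or the spin/special orthogonal cover, and the factor $\gamma_{q_Q,v}(1)$ arising in the Weil representation must be trivialized. Here the latter is automatic because $L_p$ is self-dual for every finite $p$ (so $\gamma_{q_Q,p}(1)=1$) and $\dim V_N=N+2$ with $\det Q=-1$ makes $\chi_{q_Q}$ trivial, as already observed after (\ref{Explicit-WeilAct}); the Haar measures on $\cG(\A)$ and on the orthogonal complement used to define $E$ are the Tamagawa-compatible ones implicit in (\ref{adelic-Borcherds-lift}). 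With those conventions the Weil constant is $1$, so the displayed equality (\ref{SW-conv}) holds as stated.
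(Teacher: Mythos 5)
Your proposal is correct and follows essentially the same route as the paper, which likewise deduces the proposition directly from the doubling identity (\ref{double-theta}) and Weil's convergent Siegel--Weil formula \cite[Th\'eor\`eme 5]{We2}, after noting that the section attached to $\Phi_0$ is exactly $\Xi_0$ and that $s_0=(N-1)/2$ lies in the region of absolute convergence. The only quibble is that Weil's convergence criterion is usually stated as $\dim V_N - (\text{Witt index}) > n+1$ rather than with the factor $2$ you wrote, but since the Witt index here is $1$ and $N\ge 8$, both inequalities hold and your conclusion is unaffected.
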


Returning to the Petersson norm calculation (\ref{Petersson-norm-eqn}), we now have
\begin{equation}\label{Pet-norm-eqn1}
||\Phi(*,*,f_0)||^2 = \int\limits_{\SL_2(\Q) \backslash \SL_2(\A)} \Big(\int\limits_{\SL_2(\Q) \backslash \SL_2(\A)} f_0(g_1) E((g_1, g_2), s_0; \Xi_0) dg_1 \Big) \overline{f_0(g_2)} dg_2.
\end{equation}
Call the inner integral $Z(s_0, f_0, \Xi_0)(g_2)$. By Theorem 3.6 of \cite{PSS20}, we have
$$Z(s_0, f_0, \Xi_0)(g_2) = \prod\limits_{v \leq \infty} Z_v(s_0, f_v, \Xi_v)(g_v),$$
where
\begin{equation}\label{local-zeta-int}
Z_v(s_0, f_v, \Xi_v)(g_v) = \int\limits_{\SL_2(\Q_v)} \Xi_v(Q_2(h,1), s_0) f_v(g_vh) dh
\end{equation}
with local components $\{f_v\}$ of $f_0$. 
Here 
$$Q_2 := \begin{bmatrix}1\\&&&1\\&&1&1\\1&-1\end{bmatrix}.$$ 
For $v < \infty$, all the data is unramified, and hence, by Proposition 4.1 of \cite{PSS20}, we get
\begin{equation}\label{unrami-comp}
Z_v(s_0, f_v, \Xi_v)(g_v) = \frac{L_v(s_0+\frac 12, \pi_v, {\rm Ad})}{\zeta_v(s_0+\frac 32)\zeta_v(2s_0+1)} f_v(g_v).
\end{equation}
Recall that $\pi_f = \otimes_v'\pi_v$ is the automorphic cuspidal representation of $\GL_2(\A)$ generated by $f_0$. Here, $L(s, \pi_v, {\rm Ad})$ is the degree $3$ adjoint $L$-function of $\pi_v$.

\subsection{The archimedean computation}\label{arch-comp-sec}
In this section we will compute the archimedean integral $Z_\infty(s_0, f_\infty, \Xi_\infty)$. We begin with the following proposition.
% \fbox{Please double check both proposition and lemma below$\rightarrow$H. N.: checked once}
\begin{proposition}\label{arch-K-inv-prop}
Let $\tilde K_{\infty}$ be the maximal compact subgroup of $\Sp_4(\R)$ given by $\tilde K_{\infty}:=\Sp_4(\R)\cap \OO_4(\R)$, which is isomorphic to the unitary group $U(2)$ of degree two. 
For the Schwartz functions $\phi_{\infty}$ in (\ref{arch-phi}), set $\Phi_{\infty}:=\phi_{\infty}\oplus\phi_{\infty}$. Then $\Xi_\infty(g, s_0) := \omega^D(g) \Phi_\infty(0)$  is right $\tilde K_{\infty}$-invariant
%or right equivariant by the $k$-th power of the determinant character of $K$ respectively~
(with respect to the Weil representation).
\end{proposition}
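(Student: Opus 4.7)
The plan is to reduce $\tilde K_\infty$-invariance of $\Xi_\infty$ to a Lie algebra check on a minimal set of generators of $\mathfrak k := \mathrm{Lie}(\tilde K_\infty) \cong \mathfrak u(2)$. Concretely, I would decompose $\mathfrak u(2) = \mathfrak{so}(2)_1 \oplus \mathfrak{so}(2)_2 \oplus \mathfrak p$, where $\mathfrak{so}(2)_i$ is the maximal compact subalgebra of the $i$-th copy of $\mathfrak{sl}_2(\R)$ under the doubling embedding $\SL_2 \times \SL_2 \hookrightarrow \Sp_4$ displayed right after (\ref{double-theta}), and $\mathfrak p$ is the two-dimensional complement consisting of off-diagonal elements that mix the two copies of $V_N$.

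For the diagonal part $\mathfrak{so}(2)_1 \oplus \mathfrak{so}(2)_2$, invariance of $\Phi_\infty = \phi_\infty \otimes \phi_\infty$ reduces to showing that $\phi_\infty \in S(V_N(\R))$ is $\SO(2)$-invariant for the single-variable Weil representation $\omega$. This is a weight computation. From (\ref{omega-action}) one sees that $\tilde\phi_\infty(\cdot,\nu)$ is a pure $\SO(2)$-weight vector of weight $(1-(N+1))/2 = -N/2$, reflecting the signature $(1,N+1)$ of $V_N(\R)$. By construction $P(\lambda,\nu) = e^{-\Delta/(8\pi)}(2^{-N/4-3} x_1^{N/2})$ is harmonic (for the Euclidean Laplacian on $V_N(\R)\simeq\R^{N+2}$) of pure degree $N/2$ concentrated in the positive-definite direction selected by $\nu$. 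Standard Weil representation theory then identifies $P$ as an $\SO(2)$-weight vector of weight $+N/2$, so the two weights cancel and $\phi_\infty = P \cdot \tilde\phi_\infty$ is $\SO(2)$-fixed as required.

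For the complement $\mathfrak p$, I would take explicit generators as elementary matrices in $\mathfrak{sp}_4(\R)$ that permute the first and third (respectively second and fourth) symplectic coordinates, and compute their action on $\Phi_\infty$ by breaking them into products of Heisenberg-type, diagonal, and Fourier-transform-type elements via (\ref{Weil-Doubling}). The crucial input is the symmetry $\Phi_\infty(x_1,x_2) = \Phi_\infty(x_2,x_1)$, combined with the $\OO(V_N)$-invariance of $\phi_\infty$ from Lemma \ref{phi-inv-lem} and the self-duality of $\phi_\infty$ under the $q_Q$-Fourier transform, the last of which is forced by the vanishing weight already established. The main obstacle is exactly this computation: the Fourier transform in (\ref{Weil-Doubling-local}) is taken with respect to the indefinite pairing $q_Q \oplus q_Q$, and the sign conventions built into the doubling embedding must be tracked carefully. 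A cleaner alternative that I would run in parallel is a uniqueness argument: the $\tilde K_\infty$-spherical vector in the $\Sp_4(\R)$-cyclic span of $\Phi_\infty$ inside $\omega^D$ is unique up to scalar (this span being the theta lift of the trivial representation of the diagonal $\OO(V_N(\R))$ inside the dual pair $(\Sp_4(\R), \OO(V_N\oplus V_N))$), so once $\Phi_\infty$ has been shown to be nonzero and $\SO(2) \times \SO(2)$-invariant, Frobenius reciprocity forces full $\tilde K_\infty$-invariance without any further explicit computation.
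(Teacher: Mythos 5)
Your first step (the $\SO(2)\times\SO(2)$-invariance coming from $\phi_\infty$ being an $\SO(2)$-weight-zero vector) reaches the correct conclusion by essentially the mechanism underlying the paper's Lemma~\ref{arch-lem1}(i), but your justification is inaccurate: $x_1^{N/2}$ is \emph{not} harmonic on $\R^{N+2}$ (its Laplacian is $\tfrac{N}{2}(\tfrac{N}{2}-1)x_1^{N/2-2}\neq 0$ for $N\ge 8$), and $P=\exp(-\Delta/8\pi)\bigl(x_1^{N/2}\bigr)$ is neither harmonic nor homogeneous. The correct statement, which is exactly what the operator $\exp(-\Delta/8\pi)$ is designed to produce and what the paper verifies by direct computation using Borcherds' Fourier transform formula \cite[Corollary 3.5]{Bo}, is that $P\cdot\tilde\phi_\infty$ is a pure weight vector of weight $(\tfrac12+\tfrac N2)-\tfrac{N+1}{2}=0$; so this part of your argument survives after repair.

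The genuine gap is in how you pass from torus-invariance to full $\tilde K_\infty$-invariance. Your ``cleaner alternative'' does not work: a vector fixed by the diagonal torus $\SO(2)\times\SO(2)\subset U(2)$ need not be $U(2)$-fixed even when the $U(2)$-spherical vector in the relevant subrepresentation is unique, because every self-dual type $\tau_{(\lambda,-\lambda)}$ contains a one-dimensional space of torus-weight-$(0,0)$ vectors, so a torus-fixed vector may have components in arbitrarily many nontrivial $K$-types; Frobenius reciprocity gives nothing here. This is precisely the difficulty the paper's proof is built around: Lemma~\ref{arch-lem1}(ii) shows that annihilation by $H_1,H_2$ only places $\Xi_\infty$ in the weight-$(0,0)$ line of some $\tau_{(\lambda,-\lambda)}$, and one additional invariance is needed to force $\lambda=0$. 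Your primary route to that additional invariance (an explicit computation on a complement $\mathfrak{p}$ of $\mathfrak{so}(2)\oplus\mathfrak{so}(2)$) is left as an acknowledged obstacle rather than carried out, and the generators you propose --- matrices permuting the first/third and second/fourth symplectic coordinates --- are the Weyl elements of the two $\SL_2$ factors and hence lie in $\mathfrak{so}(2)_1\oplus\mathfrak{so}(2)_2$, not in $\mathfrak{p}$. The simplification you miss is that one of the two actual generators of $\mathfrak{p}$, namely $X+\bar X$ in the paper's notation, exponentiates to $R(\theta)=\mat{r(\theta)}{}{}{{}^tr(\theta)^{-1}}$ in the Levi of the Siegel parabolic, where $\omega^D$ acts by $\Phi\mapsto\Phi(\cdot\,r(\theta))$ with no Fourier transform at all; combined with the torus computation and the $U(2)$ representation theory this closes the paper's argument. (Even that step deserves care: the rotation invariance of $\Phi_\infty(Y)=\phi_\infty(y_1)\phi_\infty(y_2)$ is immediate for the Gaussian factor $e^{-2\pi(q_R(y_1)+q_R(y_2))}$ but not for the polynomial factor $P(y_1,\nu)P(y_2,\nu)$, so strictly one should verify the identity after applying the $O(V_N)(\R)$-invariant map $\Phi\mapsto\omega^D(\cdot)\Phi(0)$; in any case this is exactly the step your proposal does not supply.)
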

\begin{proof}
For the proof we need the following lemma.
\begin{lemma}\label{arch-lem1}
\begin{enumerate}
\item The Schwartz function $\phi_{\infty}$ in (\ref{arch-phi}) is right $\SO(2)$-invariant 
%or right equivariant by the $k$-th power of the determinant character of $SO(2)$ respectively
(with respect to the Weil representation).
\item Let $\tau_{\Lambda}$ be an irreducible representation of $\tilde K_{\infty}\simeq U(2)$ with dominant weight $\Lambda=(\lambda_1,\lambda_2)\in\Z^2$ with $\lambda_1\ge\lambda_2$. The infinitesimal action of $\tau_{\Lambda}$, also denoted by $\tau_{\Lambda}$, is described by the following explicit formula:
\begin{align*}
\tau_{\Lambda}(H_1)v_k&=(\lambda_2+k)v_k,~\tau_{\Lambda}(H_2)v_k=(\lambda_1-k)v_k,\\
\tau_{\Lambda}(X)v_k&=(k+1)v_{k+1},~\tau_{\Lambda}(\bar{X})v_k=(d-(k-1))v_{k-1},
\end{align*}
\end{enumerate}
where
\begin{itemize}
\item $\{v_k\}_{0\le k\le d:=\lambda_1-\lambda_2}$ denotes a set of weight vectors which forms a basis of the representation space for $\tau_{\Lambda}$.
\item \begin{align*}
&H_1:=
\begin{bmatrix}
0 & 0 & -\sqrt{-1} & 0\\
0 & 0 & 0 & 0\\
\sqrt{-1} & 0 & 0 & 0\\
0 & 0 & 0 & 0
\end{bmatrix},~H_2:=\begin{bmatrix}
0 & 0 & 0 & 0\\
0 & 0 & 0 & -\sqrt{-1}\\
0 & 0 & 0 & 0\\
0 & \sqrt{-1} & 0 & 0
\end{bmatrix}, \\
&X:=\frac{1}{2}
\begin{bmatrix}
0 & 1 & 0 &  -\sqrt{-1}\\
-1 & 0 & -\sqrt{-1} & 0\\
0 & \sqrt{-1} & 0 & 1\\
\sqrt{-1} & 0 & -1 & 0
\end{bmatrix},
\end{align*}
and $\bar{X}$ denotes the complex conjugate of $X$.
\end{itemize}
\end{lemma}
\begin{proof}
%\fbox{\begin{minipage}{15cm} Can we write the formula for $\phi_\infty$ here, instead of just saying ``as in (\ref{arch-phi})"? The reason is that, in (\ref{arch-phi}), the formula for $\phi_\infty$ depends on $\nu \in \mathcal D$. Whereas, in the computations below we don't have any such dependence. I think that I am confused how we got the formula for $\phi_\infty$ in the proof here. \\
%$\Rightarrow$I modified the proof by specifying $\nu=\nu_0$.\end{minipage}}
(i)~Putting $\nu_0=\nu(0,1)$ we see
\[
\phi_{\infty}(\lambda,\nu_0)=P(\lambda,\nu_0)\exp(-2\pi q_R(\lambda)),
\]
where $q_R$ denotes the quadratic form defined by the majorant $R:=
\begin{pmatrix}
1 & & \\
 & S & \\
 &    & 1
\end{pmatrix}$. 
It suffices to calculate the action of 
$\begin{bmatrix}
\cos\theta & \sin\theta\\
-\sin\theta & \cos\theta
\end{bmatrix}$ on $\phi_{\infty}(\lambda,\nu_0)$ in (\ref{arch-phi}) via the Weil representation.  In fact, we see that the calculation does not depend on the choice of $\nu\in \mathcal D$. This is due to the well known commutativity of the actions of the dual pair by the Weil representation, which is also ensured by the argement of Section \ref{adelic-Borcherdslift}. The main difficulty is the calculation of the action of 
$\begin{bmatrix}
0 & 1\\
-1 & 0
\end{bmatrix}$ on $\phi_{\infty}$. 
We remark that such a calculation is well known for the Gaussian multiplied by a harmonic polynopmial. For instance we can find it in the proof of the automorphy for holomorphic theta series with harmonic polynomials. 
%We give the calculation only for the former.

When the matrix 
$\begin{bmatrix}
\cos\theta & \sin\theta\\
-\sin\theta & \cos\theta
\end{bmatrix}$ is diagonal i.e. $\sin\theta=0$ the calculation is settled in a straight forward manner. 
Let us thus assume $\sin\theta\not=0$. We then have that
\[
\begin{bmatrix}
\cos\theta & \sin\theta\\
-\sin\theta & \cos\theta
\end{bmatrix}=
\begin{bmatrix}
1 & -\frac{1}{\tan\theta}\\
0 & 1
\end{bmatrix}
\begin{bmatrix}
0 & 1\\
-1 & 0
\end{bmatrix}
\begin{bmatrix}
\sin\theta & 0\\
0 & \frac{1}{\sin\theta}
\end{bmatrix}
\begin{bmatrix}
1 & -\frac{1}{\tan\theta}\\
0 & 1
\end{bmatrix}.
\]
The action by $\begin{bmatrix}
\sin\theta & 0\\
0 & \frac{1}{\sin\theta}
\end{bmatrix}
\begin{bmatrix}
1 & -\frac{1}{\tan\theta}\\
0 & 1
\end{bmatrix}$ is calculated to be
$$
|\sin\theta|^{\frac N2+1}\exp(2\pi\sqrt{-1}(-\frac{1}{\tan\theta}q_Q(\sin\theta\cdot \lambda))(\exp(-\frac{\Delta}{8\pi})(-2^{\frac N2-4}(\sin\theta\cdot x_1)^{\frac N2}))\exp(-2\pi q_R(\sin\theta\cdot \lambda)),$$
where $x_1$ denotes the first entry of $\lambda\in \R^{N+2}$, more precisely the coordinate of $\R\cdot\nu_0=\{\frac{1}{\sqrt{2}}(x_1,0,x_1)\mid x_1\in\R\}$.  
%&(\text{$O(1,5$-case)}~|\sin\theta|^3\exp(2\pi\sqrt{-1}(-\frac{1}{\tan\theta}q_Q(\sin\theta\cdot x))(\exp(-\frac{\Delta}{8\pi})(-2^{-2}(\sin\theta\cdot x_1)^2))\exp(-2\pi Q_S(\sin\theta\cdot x)).
%\end{align*}
This is deduced immediately from the definition of the Weil representation~(cf.~(\ref{Explicit-WeilAct-local})). 

As the next step we consider the action by $\begin{bmatrix}
0 & 1\\
-1 & 0
\end{bmatrix}$, which is given by the Fourier transform of the above expression multiplied by the Weil constant. For this purpose we introduce
$$\phi_{\tau,\infty}(x):=(\exp(-\frac{\Delta}{{8\pi\rm Im}(\tau)})(2^{\frac N2-3}x_1^{\frac N2}))\exp(2\pi\sqrt{-1}({\rm Re}(\tau)q_Q(\lambda)+{\rm Im}(\tau)\sqrt{-1}q_R(\lambda)))~(\tau\in{\frak h}).$$
%&(\text{$O(1,5)$-case})~\Phi_{\tau,\infty}(x):=(\exp(-\frac{\Delta}{{8\pi\rm Im}(\tau)})(-2^{-2}x_1^2))\exp(2\pi\sqrt{-1}({\rm Re}(\tau)q_Q(x)+{\rm Im}(\tau)\sqrt{-1}Q_S(x)))~(\tau\in{\frak h}).
%\end{align*}
The result of the action by $\begin{bmatrix}
\sin\theta & 0\\
0 & \frac{1}{\sin\theta}
\end{bmatrix}
\begin{bmatrix}
1 & -\frac{1}{\tan\theta}\\
0 & 1
\end{bmatrix}$ is rewritten as
$$|\sin\theta|^{N+1}\phi_{-\cos\theta\sin\theta+\sqrt{-1}\sin^2\theta,\infty}(x).$$
%\fbox{\begin{minipage}{15cm}I only see $\exp(-\frac{\Delta}{8\pi})$ and not $\exp(-\frac{\Delta}{8\pi{\rm Im}(\tau)}) = \exp(-\frac{\Delta}{8\pi \sin^2(\theta)})$. Am I missing something here?\\
%$\Rightarrow$see the above fbox \end{minipage}}
%&(\text{$O(1,5)$-case})~|\sin\theta|^5\Phi_{-\cos\theta\sin\theta+i\sin^2\theta,\infty}(x).
%\end{align*}
From \cite[Corollary 3.5]{Bo}  the Fourier transform of $\phi_{\tau, \infty}$ is as follows:
$$\widehat{\phi_{\tau,\infty}}(x)=
(\tau/\sqrt{-1})^{-\frac{1}{2}}(\sqrt{-1}\bar{\tau})^{-\frac{N+1}{2}}(-\tau)^{-\frac N2}\phi_{-1/\tau,\infty}(x).$$
%&\widehat{\Phi_{\tau,\infty}}(x)=
%(\tau/i)^{-\frac{1}{2}}(i\bar{\tau})^{-\frac{5}{2}}(-\tau)^{-2}\Phi_{-1/\tau,\infty}(x).
%\end{align*}
Putting $\tau=-\cos\theta\sin\theta+\sqrt{-1}\sin^2\theta$ the result of the action by $\mat{}{1}{-1}{}$  is calculated to be 
\[
\phi_{-1/\tau,\infty}(x),
\]
taking into account the Weil constant for $q_Q$ and the constant factor of the self-dual measure, both of which are trivial for an even unimodular lattice. 
As the last step,  the action of 
$\begin{bmatrix}
1 & -\frac{1}{\tan\theta}\\
0 & 1
\end{bmatrix}$ is given by
\[
\phi_{-1/\tau-\frac{1}{\tan\theta},\infty}(x)=\phi_{\sqrt{-1},\infty}=\phi_{\infty}.
\]

%\fbox{For part (ii), is it not standard knowledge and we could refer to some paper or textbook?}\\
%\fbox{$\Rightarrow$In my opinion this is what one can reproduce by oneself rather than a knowledge.}

(ii)~Let ${\frak k}$ and ${\frak u}(2)$ be the Lie algebra of $K$ and $U(2)$ respectively, and note the following isomorphism
\[
{\frak k}\ni
\begin{bmatrix}
A & B\\
-B & A
\end{bmatrix}\mapsto A+\sqrt{-1}B\in {\frak u}(2)
\]
as Lie algebras. This isomorphism is naturally extended to the isomorphism between their complexifications: ${\frak k}_{\C}\simeq{\frak u}(2)_{\C}$ , the latter of which coincides with the complex matrix algebra $M_2(\C)$ of degree two. 
Via this isomorphism, $H_1,~H_2,~X,$ and $\bar{X}$ correspond to $
\begin{bmatrix}
1 & 0\\
0 & 0
\end{bmatrix},~
\begin{bmatrix}
0 & 0\\
0 & 1
\end{bmatrix},~
\begin{bmatrix}
0 & 1\\
0 & 0
\end{bmatrix}$, and $
\begin{bmatrix}
0 & 0\\
1 & 0
\end{bmatrix}$ respectively. Recall that the irreducible representation of $U(2)$ with highest weight $\Lambda=(\lambda_1,\lambda_2)$ is realizeed by $\det^{\lambda_2}{\rm sym}^{\lambda_1-\lambda_2}(\R^2)$, where ${\rm sym}^{\lambda_1-\lambda_2}(\R^2)$ denotes the $(\lambda_1-\lambda_2)$-th symmetric tensor representation of the standard representation of $U(2)$. The formula is obtained by considering the pullback of the infinitesimal action of the irreducible representation of $U(2)$ with highest weight $\Lambda$.
\end{proof}
For the proof of Proposition \ref{arch-K-inv-prop}, we note that $\Xi_\infty(g, s_0)$ is right $\tilde K_\infty$-invariant if and only if $\Xi_\infty(g, s_0)$ belongs to the one-dimensional $\tilde K_\infty$-type $\tau_{\Lambda}$ with dominant weight $\Lambda = (0,0)$. By definition of the Lie algebra action, we have, for $i=1,2$
$$\tau_\Lambda(\sqrt{-1}H_i)\Xi_\infty(g, s_0) := \frac d{d\theta}\Big|_{\theta=0} \Xi_\infty(g \exp(\theta\sqrt{-1}H_i), s_0).$$
Since 
$$\exp(\theta\sqrt{-1}H_1) = \begin{bmatrix}\cos(\theta)&0&\sin(\theta)&0\\0&1&0&0\\-\sin(\theta)&0&\cos(\theta)&0\\0&0&0&1\end{bmatrix}, \qquad \exp(\theta\sqrt{-1}H_2) = \begin{bmatrix}1&0&0&0\\0&\cos(\theta)&0&\sin(\theta)\\0&0&1&0\\0&-\sin(\theta)&0&\cos(\theta)\end{bmatrix},$$
part (i) of  Lemma \ref{arch-lem1} implies that $\tau_\Lambda(\sqrt{-1}H_i)\Xi_\infty(g, s_0) = 0$ for $i=1,2$. Hence, by part (ii) of  Lemma \ref{arch-lem1}, we see that $\Xi_\infty(g, s_0)$ is the weight $(0,0)$ vector in $\tau_{(\lambda, -\lambda)}$ for $\lambda \geq 0$. In the terminology of  part (ii) of  Lemma \ref{arch-lem1}, $\Xi_\infty(g, s_0)$ is the vector $v_{\lambda}$ in the basis of $\tau_{(\lambda, -\lambda)}$.

For $\theta \in \R$, set $r(\theta) := \mat{\cos(\theta)}{\sin(\theta)}{-\sin(\theta)}{\cos(\theta)}$ and $R(\theta) = \mat{r(\theta)}{}{}{{}^tr(\theta)^{-1}}$. By definition of the local Weil representation (\ref{Weil-Doubling-local}), we see that $(\omega^D(R(\theta))\Phi_\infty)(Y) = \Phi_\infty(Yr(\theta))$. Note that, if $Y = (y_1, y_2) \in V_N(\R)^2$, then $Yr(\theta) = (\cos(\theta)y_1-\sin(\theta)y_2, \sin(\theta)y_1+\cos(\theta)y_2)$. Using the definition of $\phi_\infty$ and the fact that $$q_Q\big(\cos(\theta)y_1-\sin(\theta)y_2\big) + q_Q\big(\sin(\theta)y_1+\cos(\theta)y_2\big) = q_Q(y_1) + q_Q(y_2),$$
we see that $\Phi_\infty(Yr(\theta)) = \Phi_\infty(Y)$. Hence we see that $\Xi_\infty(gR(\theta), s_0) = \Xi_\infty(g, s_0)$ for all $g$ and $\theta$. Since $\exp(\sqrt{-1}\theta(X + \bar{X})) = R(\theta)$, this implies that $\tau_\Lambda(\sqrt{-1}(X + \bar{X})) \Xi_\infty = 0$. By part (ii) of Lemma \ref{arch-lem1}, we can conclude that $\lambda = 0$, as required. This completes the proof of the proposition. 
\end{proof}

We will now compute the archimedean integral (\ref{local-zeta-int})
$$Z_\infty(s_0, f_\infty, \Xi_\infty)(g_2) = \int\limits_{\SL_2(\R)} \Xi_\infty(Q_2(h, 1), s_0) f_\infty(g_2h) dh.$$

 Note that $\pi_\infty$ is the irreducible principle series of $\GL_2^+(\R)$ with all even $\SO(2)$-types, and the Maass cusp form $f$ is viewed as the weight $0$ vector $f_{\infty}$ in $\pi_\infty$. Hence, in the induced model for $\pi_\infty$, we have
\begin{equation}\label{phi-formula}
f_\infty(\mat{a}{\ast}{}{a^{-1}} k) = a^{\sqrt{-1}r + 1}, \quad a \in \R^+, k \in \SO(2,\R).
\end{equation}
Here $r$ depends on the Laplace eigenvalue of the Maass form $f$. By Proposition \ref{arch-K-inv-prop}, the local section $\Xi_\infty$ is right $\tilde K_{\infty}$-invariant.  The local integral $Z_\infty$ is an element of $\pi_\infty$ and a simple change of variable shows that it is also right $\SO(2)$-invariant. Since, $f_\infty$ is the unique (up to scalars) element of $\pi_\infty$ which is right $\SO(2)$-invariant, we see that
$$Z_\infty(s_0, f_\infty, \Xi_\infty)(g_2) = B(s_0) f_\infty(g_2).$$
The final task is to compute $B(s_0)$ which can be achieved by 
$$B(s_0) = Z_\infty(s_0, f_\infty, \Xi_\infty)(1) = \int\limits_{\SL_2(\R)} \Xi_\infty(Q_2(h,1), s_0) f_\infty(h) dh.$$
Using the Iwasawa decomposition, we see that, if $u$ is a right $\SO(2)$-invariant function on $\SL_2(\R)$, then
$$\int\limits_{\SL_2(\R)} u(g) dg = 2 \pi\int\limits_0^\infty \int\limits_{-\infty}^\infty u(\mat{a}{}{}{a^{-1}} \mat{1}{x}{}{1}) a^{-1}dx da.$$
Hence, we get
\begin{align*}
B(s_0) &= 2 \pi\int\limits_0^\infty \int\limits_{-\infty}^\infty \Xi_\infty(Q_2(\mat{a}{}{}{a^{-1}} \mat{1}{x}{}{1},1), s_0) f_\infty(\mat{a}{}{}{a^{-1}} \mat{1}{x}{}{1})a^{-1}dx da\\
&= 2 \pi\int\limits_0^\infty \int\limits_{-\infty}^\infty \Xi_\infty(Q_2(\mat{a}{}{}{a^{-1}} \mat{1}{x}{}{1},1), s_0) a^{\sqrt{-1}r} dx da.
\end{align*}
We need to first simplify the integrand. For this, note that
$$Q_2 = \begin{bmatrix}1\\&1\\&1&1\\1&&&1\end{bmatrix} \begin{bmatrix}1\\&&&1\\&&1\\&-1\end{bmatrix}.$$
Hence, we have
\begin{align*}
&\Xi_\infty(Q_2(\mat{a}{}{}{a^{-1}} \mat{1}{x}{}{1},1), s_0) \\
=& \Xi_\infty(\begin{bmatrix}1\\&1\\&1&1\\1&&&1\end{bmatrix} \begin{bmatrix}1\\&&&1\\&&1\\&-1\end{bmatrix} \begin{bmatrix}a&&-ax\\&1\\&&a^{-1}\\&&&1\end{bmatrix}, s_0) \\
=& \Xi_\infty(\begin{bmatrix}a\\&1\\&&a^{-1}\\&&&1\end{bmatrix} \begin{bmatrix}1\\&1\\&a&1\\a&&&1\end{bmatrix} \begin{bmatrix}1\\&&&1\\&&1\\&-1\end{bmatrix} \begin{bmatrix}1&&-x\\&1\\&&1\\&&&1\end{bmatrix}, s_0) \\
=& a^{s_0+3/2} \Xi_\infty(\begin{bmatrix}1\\&1\\&a&1\\a&&&1\end{bmatrix} \begin{bmatrix}1\\&&&1\\&&1\\&-1\end{bmatrix} \begin{bmatrix}1&&-x\\&1\\&&1\\&&&1\end{bmatrix}, s_0) \\
=& a^{s_0+3/2} \Xi_\infty(\begin{bmatrix}1\\&1\\&a&1\\a&&&1\end{bmatrix}  \begin{bmatrix}1&&-x\\&1\\&&1\\&&&1\end{bmatrix} \begin{bmatrix}1\\&&&1\\&&1\\&-1\end{bmatrix}, s_0) \\
=& a^{s_0+3/2} \Xi_\infty(\begin{bmatrix}1\\&1\\&a&1\\a&&&1\end{bmatrix}  \begin{bmatrix}1&&-x\\&1\\&&1\\&&&1\end{bmatrix}, s_0).
\end{align*}
Here, we have used that $\Xi_\infty$ lies in the induced representation and is right $\tilde{K}_{\infty}$-invariant. Now, write
$$\mat{1}{}{a}{1} = \mat{1}{x'}{}{1} \mat{y^{1/2}}{}{}{y^{-1/2}} r(\theta) \text{ with } x' = \frac a{1+a^2}, y = \frac 1{1+a^2}, e^{\sqrt{-1}\theta} = \frac{1-\sqrt{-1}a}{\sqrt{1+a^2}}, r(\theta) = \mat{c(\theta)}{s(\theta)}{-s(\theta)}{c(\theta)}.$$
Here, $c(\theta) = \cos(\theta)$ and $s(\theta) = \sin(\theta)$. Hence,
\begin{align*}
&\Xi_\infty(Q_2(\mat{a}{}{}{a^{-1}} \mat{1}{x}{}{1},1), s_0) \\
=& a^{s_0+3/2} \Xi_\infty(\begin{bmatrix}1&&&x'\\&1&x'\\&&1\\&&&1\end{bmatrix} \begin{bmatrix}y^{1/2}\\&y^{1/2}\\&&y^{-1/2}\\&&&y^{-1/2}\end{bmatrix} \begin{bmatrix}c(\theta)&&&s(\theta)\\&c(\theta)&s(\theta)\\&-s(\theta)&c(\theta)\\-s(\theta)&&&c(\theta)\end{bmatrix} \\ 
& \qquad \qquad   \qquad \qquad \qquad \qquad\begin{bmatrix}1&&-x\\&1\\&&1\\&&&1\end{bmatrix}, s_0) \\
=& \frac{a^{s_0+3/2}}{(1+a^2)^{s_0+3/2}} \Xi_\infty(\begin{bmatrix}c(\theta)&&&s(\theta)\\&c(\theta)&s(\theta)\\&-s(\theta)&c(\theta)\\-s(\theta)&&&c(\theta)\end{bmatrix} \begin{bmatrix}1&&-x\\&1\\&&1\\&&&1\end{bmatrix}, s) \\
=&\big(\frac a{1+a^2}\big)^{s_0+3/2} \Xi_\infty(\begin{bmatrix}1&-s(\theta)c(\theta)x&-c(\theta)^2x\\&1\\&&1\\&s(\theta)^2x&s(\theta)c(\theta)x&1\end{bmatrix} \begin{bmatrix}c(\theta)&&&s(\theta)\\&c(\theta)&s(\theta)\\&-s(\theta)&c(\theta)\\-s(\theta)&&&c(\theta)\end{bmatrix}, s)\\
=&\big(\frac a{1+a^2}\big)^{s_0+3/2} \Xi_\infty(\begin{bmatrix}1&-s(\theta)c(\theta)x&-c(\theta)^2x\\&1\\&&1\\&s(\theta)^2x&s(\theta)c(\theta)x&1\end{bmatrix}, s)\\
=&\big(\frac a{1+a^2}\big)^{s_0+3/2} \Xi_\infty(\begin{bmatrix}1&-s(\theta)c(\theta)x&-c(\theta)^2x\\&1\\&&1\\&&s(\theta)c(\theta)x&1\end{bmatrix} \begin{bmatrix}1\\&1\\&&1\\&s(\theta)^2x&&1\end{bmatrix}, s)\\
=&\big(\frac a{1+a^2}\big)^{s_0+3/2} \Xi_\infty(\begin{bmatrix}1\\&1\\&&1\\&s(\theta)^2x&&1\end{bmatrix}, s).
\end{align*}
Note that $s(\theta)^2 = a^2/(1+a^2)$. Substituting this in the integral for $B(s_0)$, we get
$$B(s_0) = 2 \pi\int\limits_0^\infty \int\limits_{-\infty}^\infty \big(\frac a{1+a^2}\big)^{s_0+3/2} \Xi_\infty(\begin{bmatrix}1\\&1\\&&1\\&s(\theta)^2x&&1\end{bmatrix}, s_0) a^{\sqrt{-1}r} dx da. $$
A change of variable $x \to s(\theta)^{-2}x$ gives us
\begin{align*}
B(s_0) &= 2 \pi\int\limits_0^\infty \int\limits_{-\infty}^\infty \big(\frac a{1+a^2}\big)^{s_0+3/2} \frac{1+a^2}{a^2}\Xi_\infty(\begin{bmatrix}1\\&1\\&&1\\&x&&1\end{bmatrix}, s_0) a^{\sqrt{-1}r} dx da \\
&= 2 \pi\int\limits_0^\infty \frac{a^{s_0-1/2+\sqrt{-1}r}}{(1+a^2)^{s_0+1/2}} da \int\limits_{-\infty}^\infty \Xi_\infty(\begin{bmatrix}1\\&1\\&&1\\&x&&1\end{bmatrix}, s_0) dx.
\end{align*}
Let us consider the integral in the $a$ variable first. Change of variable $u = a^2$ gives us
$$\int\limits_0^\infty \frac{a^{s_0-1/2+\sqrt{-1}r}}{(1+a^2)^{s_0+1/2}} da = \frac 12 \int\limits_0^\infty \frac{u^{\frac{s_0}2-\frac 34+\frac{\sqrt{-1}r}2}}{(1+u)^{s_0+1/2}} du.$$
Recall the beta function 
$$B(v, w) = \frac{\Gamma(v)\Gamma(w)}{\Gamma(v+w)}.$$
One integral representation for the beta function (see pg 7 of \cite{MOS}) is 
$$B(v,w) = \int\limits_0^\infty \frac{u^{v-1}}{(1+u)^{v+w}} du \text{ for } {\rm Re}(v) > 0, {\rm Re}(w) > 0.$$
We have $v = s_0/2+1/4+\sqrt{-1}r/2$ and $w = s_0/2+1/4-\sqrt{-1}r/2$. Hence, we get
$$\int\limits_0^\infty \frac{a^{3s-1/2+\sqrt{-1}r}}{(1+a^2)^{3s+1/2}} da = \frac 12  \frac{\Gamma(s_0/2+1/4+\sqrt{-1}r/2)\Gamma(s_0/2+1/4-\sqrt{-1}r/2)}{\Gamma(s_0+1/2)}.$$
Now, let us compute the integral in the $x$ variable. We will again use the Iwasawa decomposition 
$$\mat{1}{}{x}{1} = \mat{1}{x'}{}{1} \mat{y^{1/2}}{}{}{y^{-1/2}} r(\theta) \text{ with } x' = \frac x{1+x^2}, y = \frac 1{1+x^2}, e^{\sqrt{-1}\theta} = \frac{1-\sqrt{-1}x}{\sqrt{1+x^2}}.$$
Hence
\begin{align*}
\int\limits_{-\infty}^\infty \Xi_\infty(\begin{bmatrix}1\\&1\\&&1\\&x&&1\end{bmatrix}, s_0) dx &= \int\limits_{-\infty}^\infty \Xi_\infty(\begin{bmatrix}1\\&1&&x'\\&&1\\&&&1\end{bmatrix} \begin{bmatrix}1\\&y^{1/2}\\&&1\\&&&y^{-1/2}\end{bmatrix}, s_0) dx \\
&= \int\limits_{-\infty}^\infty \frac 1{(1+x^2)^{\frac{s_0}2 + \frac 34}} dx = 2  \int\limits_0^\infty \frac 1{(1+x^2)^{\frac{s_0}2 + \frac 34}} dx.
\end{align*}
The change of variable $x = \tan(\theta)$ gives us 
$$\int\limits_{-\infty}^\infty \Xi_\infty(\begin{bmatrix}1\\&1\\&&1\\&x&&1\end{bmatrix}, s_0) dx = 2 \int\limits_0^{\pi/2} \frac 1{(\sec^2(\theta))^{\frac{s_0}2 + \frac 34}} \sec^2(\theta) d\theta = 2 \int\limits_0^{\pi/2} \cos(\theta)^{s_0-\frac 12} d\theta$$
On pg 8 of \cite{MOS}, we have the formula
$$\int\limits_0^{\pi/2} \sin(\theta)^{2x-1} \cos(\theta)^{2y-1} d\theta = \frac 12 B(x, y) \text{ for } {\rm Re}(x) > 0, {\rm Re}(y) > 0.$$
We have $x = 1/2$ and $y = s_0/2+1/4$. 
Hence, we have 
$$\int\limits_{-\infty}^\infty \Xi_\infty(\begin{bmatrix}1\\&1\\&&1\\&x&&1\end{bmatrix}, s_0) dx = B(1/2, s_0/2+1/4) = \frac{\Gamma(1/2)\Gamma(s_0/2+1/4)}{\Gamma(s_0/2+3/4)}.$$
Putting all this together, and using properties of the gamma function, we get
\begin{align*}
B(s_0) &= \pi \frac{\Gamma(s_0/2+1/4+\sqrt{-1}r/2)\Gamma(s_0/2+1/4-\sqrt{-1}r/2)}{\Gamma(s_0+1/2)} \frac{\Gamma(1/2)\Gamma(s_0/2+1/4)}{\Gamma(s_0/2+3/4)} \\
&= 2^{1-\frac{N}{2}} \pi^2 \frac{\Gamma(\frac N4+\frac{\sqrt{-1}r}2) \Gamma(\frac N4-\frac{\sqrt{-1}r}2)}{\Gamma(\frac N4 + \frac 12)^2}.
\end{align*}
%\fbox{Need to add the correct power of $2$ from the doubling formula for $\Gamma(z)$ in the formula above and the theorem below.}\\
%\fbox{$\Rightarrow$Using $\Gamma(s)\Gamma(s+\frac{1}{2})=\sqrt{\pi}2^{1-2s}\Gamma(2s)$ the power of 2 are modified.}

Putting together (\ref{Pet-norm-eqn1}), (\ref{unrami-comp}) and the formula of $B(s_0)$ above, we get the following theorem.

\begin{theorem}\label{main-norm-thm}
The Petersson norm of the Borcherds theta lift is given by
\begin{equation}\label{lift-norm}
||\Phi(*,*,f_0)||^2 = \frac{L(\frac N2, \pi_f, {\rm Ad})}{\zeta(\frac N2 + 1)\zeta(N)} \Big(2^{1-\frac{N}{2}}\pi^2 \frac{\Gamma(\frac N4+\frac{\sqrt{-1}r}2) \Gamma(\frac N4-\frac{\sqrt{-1}r}2)}{\Gamma(\frac N4 + \frac 12)^2}\Big) ||f_0||^2.
\end{equation}
\end{theorem}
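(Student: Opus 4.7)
\textbf{Proof plan for Theorem \ref{main-norm-thm}.} The starting point is the identity (\ref{Pet-norm-eqn1}), which expresses $\|\Phi(*,*,f_0)\|^2$ as the integral against $\overline{f_0(g_2)}$ of the inner zeta integral $Z(s_0,f_0,\Xi_0)(g_2)$. The plan is to (a) factor this inner integral as an Euler product of local integrals $Z_v(s_0,f_v,\Xi_v)(g_v)$, (b) invoke the unramified computation from \cite{PSS20} at every finite place, (c) determine the archimedean integral $Z_\infty(s_0,f_\infty,\Xi_\infty)$ as a scalar multiple of $f_\infty$ by a representation-theoretic argument, (d) compute that scalar explicitly by an Iwasawa-coordinate calculation, and finally (e) integrate against $\overline{f_0}$ to recover $\|f\|^2$ and assemble the formula.

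For step (a)--(b), the factorization is Theorem 3.6 of \cite{PSS20}; since $L$ is even unimodular and $f$ has level one, all finite-place data are unramified and Proposition 4.1 of loc.\ cit.\ yields equation (\ref{unrami-comp}). Taking the product over finite places gives the ratio $L(\tfrac N2,\pi,\mathrm{Ad})/\big(\zeta(\tfrac N2+1)\zeta(N)\big)$ in (\ref{lift-norm}), where the substitution $s_0=(N-1)/2$ and a shift of the local $L$-factors at $v=\infty$ by the usual normalization are used.

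For step (c), I would exploit Proposition \ref{arch-K-inv-prop}, which identifies $\Xi_\infty$ as a $\tilde K_\infty$-invariant section. A standard change of variable $h\mapsto hk$ combined with left-$P(\R)$-equivariance of $\Xi_\infty$ shows that $Z_\infty$ is right-$\mathrm{SO}(2)$-invariant, and since $\pi_\infty$ contains a unique (up to scalar) spherical line spanned by $f_\infty$ (the weight-zero vector with Iwasawa expansion (\ref{phi-formula})), one gets $Z_\infty(s_0,f_\infty,\Xi_\infty)(g_2)=B(s_0)f_\infty(g_2)$ for some scalar $B(s_0)$. Setting $g_2=1$ and using the Iwasawa decomposition for $\mathrm{SL}_2(\R)$ reduces $B(s_0)$ to a double integral over $(a,x)\in\R^+\times\R$.

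Step (d) is the technical core. Using the factorization
\[
Q_2=\begin{bmatrix}1\\&1\\&1&1\\1&&&1\end{bmatrix}\begin{bmatrix}1\\&&&1\\&&1\\&-1\end{bmatrix},
\]
the inductive transformation law for $\Xi_\infty$ under $P(\R)$ and $\tilde K_\infty$, and the Iwasawa decomposition of $\mat{1}{}{a}{1}$, the integrand collapses to a product of two elementary factors: an $a$-integral of the form $\int_0^\infty a^{s_0-1/2+\sqrt{-1}r}(1+a^2)^{-s_0-1/2}\,da$ and an $x$-integral of the form $\int_{-\infty}^\infty(1+x^2)^{-s_0/2-3/4}\,dx$. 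Both are standard beta integrals, yielding gamma-function ratios that, after specialization $s_0=(N-1)/2$ and simplification via the duplication formula for $\Gamma$, combine into the archimedean factor displayed in (\ref{lift-norm}). The main obstacle is carrying out the matrix manipulation cleanly enough that the $x$-integral becomes independent of the $a$-integral after the change of variable $x\mapsto s(\theta)^{-2}x$; once this decoupling is achieved, the rest is bookkeeping. Finally, step (e) is immediate: the outer $g_2$-integral is $\|f_0\|^2$, which equals $\|f\|^2$ under the chosen adelization.
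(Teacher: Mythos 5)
Your plan reproduces the paper's proof essentially step for step: the Euler-product factorization via Theorem 3.6 of \cite{PSS20}, the unramified evaluation (\ref{unrami-comp}) at finite places, the reduction $Z_\infty = B(s_0)f_\infty$ via Proposition \ref{arch-K-inv-prop} and uniqueness of the spherical vector, the $Q_2$ factorization and Iwasawa-coordinate collapse to the two beta integrals, and the final assembly using the duplication formula at $s_0=(N-1)/2$. This is the same argument as in Section \ref{arch-comp-sec}, and it is correct.
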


The above theorem together with Proposition \ref{class-adelic-prop} gives us Theorem \ref{norm-thm-intro} from the introduction. We can now obtain the important corollary regarding injectivity of the Borcherds theta lift.

\begin{corollary}\label{inj-cor}
Let $f \in S(\SL_2(\Z);-\frac{r^2 + 1}{4})$ and $F_f$ be its Borcherds lift defined in Section \ref{Autom-form}. Then the map $f \to F_f$ is injective.
\end{corollary}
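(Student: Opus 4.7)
The plan is to reduce injectivity of the linear map $f\mapsto F_f$ to two assertions about Hecke eigenforms. The finite-dimensional Laplace eigenspace $V:=S(\SL_2(\Z);-\tfrac{r^2+1}{4})$ is stable under the commuting family of self-adjoint Hecke operators, and by strong multiplicity one for $\GL_2(\A)$ applied at level one with trivial central character, its joint Hecke eigenspaces are one-dimensional. Thus $V$ admits a basis $f_1,\ldots,f_n$ of Hecke eigenforms with pairwise distinct Hecke eigensystems $\{\lambda_p(f_i)\}_p$. It suffices to prove (a) $F_{f_i}\ne 0$ for every $i$, and (b) the lifts $F_{f_1},\ldots,F_{f_n}$ are linearly independent in the space of cusp forms on $\cG(\A)$.

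For (a), I would invoke Theorem \ref{main-norm-thm}: every factor in the Petersson norm formula is nonvanishing. Indeed, $L(\tfrac{N}{2},\pi_{f_i},\mathrm{Ad})\ne 0$ because $N/2\ge 4$ lies strictly inside the half-plane of absolute convergence of the Euler product, and the same reasoning yields $\zeta(\tfrac{N}{2}+1)\ne 0$ and $\zeta(N)\ne 0$; the gamma factors $\Gamma(\tfrac{N}{4}\pm\tfrac{\sqrt{-1}r}{2})$ are finite and nonzero since their arguments avoid the poles of $\Gamma$; and $\|f_i\|^2>0$ by definition. Hence $\|F_{f_i}\|^2>0$, so $F_{f_i}\ne 0$.

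For (b), by Theorem \ref{LNP-theorem}(ii) each $F_{f_i}$ is itself a Hecke eigenform, and by Theorem \ref{LNP-theorem}(iv) its standard $L$-function is $L(s,\pi_{f_i},\mathrm{Sym}^2)\prod_j\zeta(s-j)$, so comparing Satake parameters at each unramified prime $p$ recovers $\lambda_p(f_i)^2$ from the Hecke eigenvalues of $F_{f_i}$. To remove the residual sign ambiguity, I would appeal to the explicit Hecke eigenvalue formulas for $F_f$ derived in \cite{LNP} — precisely the input used in Theorem 7.1 of \cite{NPW} — which express the Hecke eigenvalues of $F_{f_i}$ as injective functions of those of $f_i$. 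Consequently distinct eigensystems on the $\GL_2$ side produce distinct eigensystems on $\cG(\A)$, and $F_{f_1},\ldots,F_{f_n}$ are linearly independent.

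Combining (a) and (b) with the linearity of $f\mapsto F_f$ completes the proof: if $f=\sum c_i f_i\in V$ satisfies $F_f=\sum c_i F_{f_i}=0$, then linear independence forces every $c_i=0$, hence $f=0$. The main obstacle lies in step (b), namely ruling out the possibility that two Hecke eigenforms $f_i,f_j$ with $\lambda_p(f_i)=\pm\lambda_p(f_j)$ at every $p$ yield lifts with the same Hecke eigensystem; resolving this is exactly where the explicit Hecke action computation of \cite{LNP} is indispensable, rather than merely the norm formula of Theorem \ref{main-norm-thm}.
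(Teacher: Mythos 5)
Your proposal is correct and follows essentially the same route as the paper: non-vanishing of $F_{f_i}$ for Hecke eigenforms $f_i$ via the Petersson norm formula of Theorem \ref{main-norm-thm} (each factor being visibly nonzero since $N/2$ lies in the region of absolute convergence), combined with the linear-independence argument for the lifts of a Hecke eigenbasis based on the explicit Hecke eigenvalue formulas of \cite{LNP} --- which is exactly the ``Linear Algebra trick'' of Theorem 7.1 of \cite{NPW} that the paper invokes. Your explicit remark that the standard $L$-function alone only determines $\lambda_p(f_i)^2$, so that the explicit Hecke action (rather than Theorem \ref{LNP-theorem}(iv)) is the indispensable input for distinguishing eigensystems, is a correct and worthwhile clarification of what the cited argument actually uses.
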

\begin{proof}
If $f$ is a Hecke eigenform, then by Theorem \ref{LNP-theorem}, so is $F_f$. By Proposition \ref{class-adelic-prop}, the Petersson norm of $F_f$ is given by Theorem \ref{main-norm-thm} above. Hence, $f \to F_f$ is an injective map when restricted to Hecke eigenforms. To conclude the same about non-Hecke eigenforms, we follow the exact argument as in the proof of Theorem 7.1 of \cite{NPW}. Note that, for this we need formulas for Hecke eigenvalues of $F_f$ in terms of those of $f$, and these are obtained in Theorem 4.11 of \cite{LNP}.
\end{proof}

\section{Sup-norm bounds for Maass cusp forms on $O(1,N+1)$}\label{Supnormsec}
In this section, will obtain lower and upper bounds for the sup-norm of $F_f$. For the lower bounds we will use the Fourier expansion of $F_f$. For the upper bound, we will use a combination of Fourier expansion of $F_f$ and the pre trace formula method.

\subsection{Upper and lower bounds using Fourier expansions}\label{Four-exp-sec}
Recall from Section \ref{Autom-form} that we have $f \in S(\SL_2(\Z);-\frac{r^2 + 1}{4})$ given by the Fourier expansion
\begin{align*}
f(u+i v) &= \sum_{n\not=0}c(n)W_{0,\frac{\sqrt{-1}r}{2}}(4\pi|n|v)\exp(2\pi\sqrt{-1}nu) \\
&= \sum_{n\not=0}2 c(n) |n|^{1/2} v^{1/2} K_{\frac{\sqrt{-1}r}{2}}(2\pi|n|v)\exp(2\pi\sqrt{-1}nu).
\end{align*}
We have the Borcherds theta lift $F_f$ obtained in Section \ref{Autom-form} given by the Fourier expansion
\begin{equation}\label{Four-exp-2}
F_f(n(x)a_y)=\sum_{\lambda\in L\setminus\{0\}}A(\lambda)y^{\frac N2}K_{\sqrt{-1}r}(4\pi|\lambda|_Sy)\exp(2\pi\sqrt{-1}{}^t\lambda Sx)
\end{equation}
with 
$$A(\lambda)=|\lambda|_S\sum_{d|d_{\lambda}}c\left(-\frac{|\lambda|_S^2}{d^2}\right)d^{\frac N2-2}.$$

Let us make the assumption that $f$ is a Hecke eigenform with $c(m) = \pm c(-m)$  for all $m \in \Z$. Let us also assume that $r$ is bounded from below, i.e. $r \gg 1$.

\subsection*{Fourier expansion of $F_f$ at the cusps of $\Gamma_S$}
In addition to the Fourier expansion (\ref{Four-exp-2}) of $F_f$ at the cusp at $\infty$ (a neighborhood of $y=\infty$ in $H_N$), we will also need the same at other cusps of $\Gamma_S$. Let us briefly recall this from Section 3.3 of \cite{LNP}. Let the adelization of $F_f$ (denoted by the same symbol) be as defined in equations (3.2)-(3.5) of \cite{LNP}. We know from Lemma \ref{Classnum-Cusps} that there is a bijection between the cusps of $\Gamma_S$ and the double cosets $\cH(\Q) \backslash \cH(\A) / U_fU_\infty$, which has representatives in $\cH(\A_f)$. Given $c \in \cG(\Q)$, a representative of a cusp of $\Gamma_S$, let us write $c = c_fc_\infty$ with $c_f \in \cG(\A_f)$ and $c_\infty \in G_\infty$. Let $h \in \cH(\A_f)$ correspond to the cusp $c$. Then, there exist $\gamma \in \cH(\Q)$ and $k \in U_\infty$ such that $c = \begin{bmatrix}1\\&\gamma h k\\&&1\end{bmatrix}$. We will write $\gamma_f$ for the finite part of $\gamma$. Using the fact that $F_f$ is left invariant  by $\cG(\Q)$, right invariant by $K_\infty$ and $\begin{bmatrix}1\\&h_\infty\\&&1\end{bmatrix} \in K_\infty$ for $h_\infty \in H_\infty$, we get for $x \in \R^N, y \in \R_{>0}$,
\begin{align*}
F_f(n(x)a_y\begin{bmatrix}1\\&h\\&&1\end{bmatrix}) &=  F_f(n(x)a_y\begin{bmatrix}1\\&\gamma_f^{-1}\\&&1\end{bmatrix} c_f) = F_f(n(x)a_y\begin{bmatrix}1\\&\gamma^{-1}\\&&1\end{bmatrix} c_f) \\
&= F_f(\begin{bmatrix}1\\&\gamma^{-1}\\&&1\end{bmatrix} n(\gamma x)a_y c_f) = F_f(c_fn(\gamma x)a_y) \\
&= F_f(cc_\infty^{-1} n(\gamma x)a_y) = F_f(c_\infty^{-1} n(\gamma x)a_y),
\end{align*}
where we regard $\gamma$ as $\gamma_{\infty}$ for $n(\gamma x)$ appearing in the equations of the second and third lines. 
The definition of the adelic $F_f(n(x)a_y\begin{bmatrix}1\\&h\\&&1\end{bmatrix})$ given in (3.2), (3.3) of \cite{LNP} implies
\begin{equation}\label{cusp-Fourier-coeff}
F_f(c_\infty^{-1} n(x)a_y) = \sum\limits_{\lambda \in L_h \backslash \{0\}} A_h(\lambda) y^{\frac N2} K_{\ii r}(4 \pi |\lambda|_Sy) \exp(2 \pi \ii {}^t\lambda S (\gamma^{-1}x)),
\end{equation}
where $L_h = aL$, with $h = au, (a,u) \in \GL_N(\Q) \times \big(\prod_{p<\infty} \SL_N(\Z_p) \times \SL_N(\R)\big)$, and 
$$A_h(\lambda) = |\lambda|_S\sum_{d|d_{\lambda}}c\left(-\frac{|\lambda|_S^2}{d^2}\right)d^{\frac N2-2},$$
with $d_\lambda$ being the smallest positive integer such that $\frac 1{d_\lambda} a^{-1}\lambda \in L$. Note that $L_h$ is also an even unimodular lattice and (\ref{cusp-Fourier-coeff}) gives the Fourier expansion of $F_f$ at the cusp corresponding to $c^{-1}$.

\subsection*{Bounds on $||F_f||_2, |A_h(\lambda)|$ and $K_{\ii r}(y)$}
 Note that $c(1) \neq 0$ and we have the following estimate on the first Fourier coefficient $c(1)$ given by \cite[Theorem 2]{Iw} and \cite[Corollary 0.3]{Ho-Lo}. For any $\epsilon > 0$
\begin{equation}\label{c(1)-estimates}
r^{-\epsilon} \cosh(\pi r/2) \ll_\epsilon  \frac{|c(1)|^2}{||f||_2^2} \ll_\epsilon r^\epsilon \cosh(\pi r/2).
\end{equation}
Here $||f||_2$ is the Petersson norm of $f$, and we are not assuming it to be equal to $1$. 
\begin{proposition}\label{Petersson-norm-estimate}
For any $\epsilon>0$ we have an estimate as follows:
\begin{enumerate}
\item $$\sqrt{\sinh(\pi r/2)}r^{-\frac N4+\frac 12} \ll_N \frac{||f||_2}{||F_f||_2}\ll_{N} \sqrt{\sinh(\pi r/2)} r^{-\frac N4+\frac 12}.$$
\item For all $\lambda \in L_h \backslash \{0\}$ and $\epsilon > 0$, we have 
\begin{align*}
\frac{|A_h(\lambda)|}{||f||_2}  &\ll_\epsilon |\lambda|^{2\theta+1+\epsilon}_S d_\lambda^{\frac N2-2-2\theta} r^{\epsilon}\sqrt{\cosh(\pi r/2)} \\
& \ll_\epsilon |\lambda|^{\frac N2-1+\epsilon}_S  r^{\epsilon}\sqrt{\cosh(\pi r/2)},
\end{align*}
%$$|A(\lambda)| \ll_\epsilon r^{\epsilon}\sqrt{\cosh(\pi r/2)}|\lambda|_S^{4n+2\theta}$$ 
where $\theta = 7/64$ is the current best estimate towards the Ramanujan conjecture for Maass forms.  
\end{enumerate}
\end{proposition}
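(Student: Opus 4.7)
The plan is to treat the two parts of Proposition \ref{Petersson-norm-estimate} separately, using Theorem \ref{main-norm-thm} for part (i) and the explicit divisor-sum formula for $A_h(\lambda)$ together with the Kim--Sarnak bound for part (ii).

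For part (i), I rewrite Theorem \ref{main-norm-thm} as
\[
\frac{\|F_f\|_2^2}{\|f\|_2^2} = \frac{L(N/2,\pi_f,\mathrm{Ad})}{\zeta(N/2+1)\zeta(N)} \cdot 2^{1-N/2}\pi^2 \cdot \frac{|\Gamma(N/4+\sqrt{-1}r/2)|^2}{\Gamma(N/4+1/2)^2},
\]
using $\overline{\Gamma(a+\sqrt{-1}b)}=\Gamma(a-\sqrt{-1}b)$ for real $a,b$, and estimate each factor. The ratio of zeta values depends only on $N$. Since $N\ge 8$, the point $s=N/2$ lies well inside the region of absolute convergence of the Euler product for the adjoint $L$-function, and the Kim--Sarnak bound $\theta=7/64<1/2$ shows that each local Euler factor at $s=N/2$ is pinched between two positive constants, giving $L(N/2,\pi_f,\mathrm{Ad})\asymp_N 1$ uniformly in $r$. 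For the archimedean ratio I invoke Stirling's asymptotic
\[
|\Gamma(a+\sqrt{-1}b)|^2 \sim 2\pi |b|^{2a-1} e^{-\pi|b|}\quad (|b|\to\infty),
\]
with $a=N/4$ and $b=r/2$, valid since $r\gg 1$, which yields $|\Gamma(N/4+\sqrt{-1}r/2)|^2 \asymp_N r^{N/2-1}e^{-\pi r/2}$. Combining with $\sinh(\pi r/2)\asymp e^{\pi r/2}$ for $r\gg 1$, taking square roots and inverting gives the claimed two-sided bound on $\|f\|_2/\|F_f\|_2$.

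For part (ii), I start from
\[
A_h(\lambda) = |\lambda|_S \sum_{d\mid d_\lambda} c(-|\lambda|_S^2/d^2)\, d^{N/2-2}
\]
and combine three standard ingredients. First, since $f$ is a Hecke eigenform with $c(m)=\pm c(-m)$, Hecke multiplicativity together with the Kim--Sarnak bound yields $|c(n)| \ll_\epsilon |c(1)|\,|n|^{\theta+\epsilon}$ for every nonzero integer $n$, with $\theta=7/64$. Second, the upper bound in (\ref{c(1)-estimates}) gives $|c(1)|/\|f\|_2 \ll_\epsilon r^{\epsilon}\sqrt{\cosh(\pi r/2)}$. Third, the divisor bound $\tau(m)\ll_\epsilon m^\epsilon$ absorbs all multiplicity factors. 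Inserting the first estimate into the sum, pulling out $|\lambda|_S^{2\theta}$, and bounding $\sum_{d\mid d_\lambda} d^{N/2-2-2\theta}$ by its largest term $d=d_\lambda$ up to a divisor factor (the exponent is positive since $N\ge 8$) produces
\[
|A_h(\lambda)| \ll_\epsilon |c(1)|\, |\lambda|_S^{1+2\theta+\epsilon}\, d_\lambda^{N/2-2-2\theta+\epsilon},
\]
and combining with the estimate on $|c(1)|/\|f\|_2$ gives the first inequality. For the second inequality I use that $L$ is even positive definite, so $q_S(\mu)\in\mathbb{Z}_{>0}$ for every $0\ne\mu\in L$ and hence $|\mu|_S\ge 1$; writing $\lambda=d_\lambda \lambda_0$ with $\lambda_0\in L$ primitive gives $d_\lambda\le |\lambda|_S$, and replacing $d_\lambda^{N/2-2-2\theta}$ by $|\lambda|_S^{N/2-2-2\theta}$ collapses the two exponents into the claimed $N/2-1+\epsilon$.

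The only mildly delicate step is the uniform-in-$r$ lower bound on $L(N/2,\pi_f,\mathrm{Ad})$ in part (i), but this reduces to a term-by-term estimate of the Euler product using $N/2-2\theta>1$ and requires no deep $L$-function input. Everything else in the argument is Stirling, divisor bounds, and bookkeeping of the divisor sum defining $A_h(\lambda)$.
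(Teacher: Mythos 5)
Your proposal is correct and follows essentially the same route as the paper: part (i) from Theorem \ref{main-norm-thm} by bounding the zeta ratio and the absolutely convergent adjoint $L$-value uniformly in $r$ and extracting the asymptotics of $|\Gamma(N/4+\sqrt{-1}r/2)|^2$ (the paper uses the exact identity $|\Gamma(N/4+\sqrt{-1}r/2)|^2=\frac{\pi r/2}{\sinh(\pi r/2)}\prod_{k=1}^{N/4-1}(k^2+(r/2)^2)$ where you use Stirling, to the same effect), and part (ii) via $c(n)=c(1)\mu(|n|)$, the Kim--Sarnak bound, the divisor bound, and $d_\lambda\le|\lambda|_S$. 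The stray $d_\lambda^{\epsilon}$ in your intermediate bound is harmlessly absorbed into $|\lambda|_S^{\epsilon}$, so there is no gap.
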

\begin{proof}
By Theorem \ref{main-norm-thm}, we have
$$||F_f||_2^2 = \frac{L_(\frac N2, \pi_f, {\rm Ad})}{\zeta(\frac N2 + 1)\zeta(N)} \Big(2^{1-\frac{N}{2}}\pi^2 \frac{\Gamma(\frac N4+\frac{\sqrt{-1}r}2) \Gamma(\frac N4-\frac{\sqrt{-1}r}2)}{\Gamma(\frac N4 + \frac 12)^2}\Big) ||f||_2^2.$$
We have
$$1 \ll_N  \frac{2^{1-\frac{N}{2}}\pi^2L_(\frac N2, \pi_f, {\rm Ad})}{\zeta(\frac N2 + 1)\zeta(N)\Gamma(\frac N4 + \frac 12)^2}  \ll_N 1.$$
Since $N/2 > 1 + 2\theta$, the $L$-function above is given by a convergent Dirichlet series, and hence can be bounded by estimates independent of $f$.

%\fbox{Is this estimate independent of $f$?}\\
%
%Hence, we see that 
%$$\frac{1}{||F_f||_2^2}\ll_{N} \frac 1{L_(\frac N2, \pi_f, {\rm Ad}) \Gamma(\frac N4+\frac{\sqrt{-1}r}2) \Gamma(\frac N4-\frac{\sqrt{-1}r}2)}.$$
%
%
%\noindent *********
%
%Relating the adjoint $L$-function to the Rankin Selberg $L$-function of $\pi_f \times \pi_f$, and then comparing it to the coefficient of its Dirichlet series, we obtain
%\[
%|L(\frac N2,\pi_f,\Ad)| \gg_N |c(1)|^2\gg_{\epsilon, N}r^{-\epsilon}\cosh(\pi r/2).
%\]
%Here, we have used the left hand inequality of  (\ref{c(1)-estimates}). 
%
%\fbox{As discussed in emails, the above argument is incorrect and we actually have $|L(\frac N2,\pi_f,\Ad)| \sim 1$}
%
%\noindent ********
For the terms involving the Gamma function we use the following standard properties 
$$
\Gamma(\frac N4+\frac{\sqrt{-1}r}{2})\Gamma(\frac N4-\frac{\sqrt{-1}r}{2}) = |\Gamma(\frac N4+\frac{\sqrt{-1}r}{2})|^2 = \frac{\pi r/2}{\sinh(\pi r/2)} \prod\limits_{k=1}^{\frac N4-1}(k^2 + (r/2)^2). $$
Hence, we have
$$ \frac{\pi r/2}{\sinh(\pi r/2)} r^{\frac N2-2} \ll_N \Gamma(\frac N4+\frac{\sqrt{-1}r}{2})\Gamma(\frac N4-\frac{\sqrt{-1}r}{2}) \leq \frac{\pi r/2}{\sinh(\pi r/2)} (N^2+r^2)^{\frac N4-1} \ll_N \frac{\pi r/2}{\sinh(\pi r/2)} r^{\frac N2-2}.
$$
%
%With the help of the following well known formula
%\[
%s\Gamma(s)=\Gamma(s+1),\quad\Gamma(s)\Gamma(1-s)=\pi(\sin(\pi s))^{-1}
%\]
%we have 
%\[
%|\displaystyle\frac{1}{\Gamma(2n+\frac{\sqrt{-1}r}{2})\Gamma(2n-\frac{\sqrt{-1}r}{2})}|=|\frac{1}{(1-(2n+\frac{\sqrt{-1}r}{2}))_{4n-1}}\frac{\sinh(\pi r/2)}{\pi\sqrt{-1}}|.
%\]
Here, we have used $r \gg 1$. Hence we obtain the estimate for $||f||_2/||F_f||_2$ in the statement of the proposition.

To obtain the estimate for $|A_h(\lambda)|$, we have
$$|A_h(\lambda)| \leq |\lambda|_S \sum_{d|d_{\lambda}}|c\left(-\frac{|\lambda|_S^2}{d^2}\right)|d^{\frac N2-2} = |\lambda|_S \sum_{d|d_{\lambda}}|c(1)||\mu\left(\frac{|\lambda|_S^2}{d^2}\right)|d^{\frac N2-2},$$
where $\mu(m)$ is the Hecke eigenvalue for $f$ for the Hecke operator $T(m)$. We have, by \cite[Appendix 2, Proposition 2]{Ki}~(see also \cite[Section 8.5]{Iw2}),
% \fbox{$\Leftarrow$Reference?} 
$|\mu(m)| \leq m^\theta \tau(m)$ where $\tau(m)$ is the number of divisors of $m$ and $\theta = 7/64$ is the current best estimate towards the Ramanujan conjecture for Maass forms. 
%\fbox{How about the notation $\tau(m)$ instead of $d(m)$? This follows the notation from \cite[p48]{Iw2}.(H.N.)}\\
%\fbox{In fact, we do not use Ramanujan $\tau$ function.} 
Using $\tau(m) \ll_\epsilon m^{\epsilon}$ for every $m > 0$, the right hand inequality of (\ref{c(1)-estimates}) and $\frac N2-2-2\theta-\epsilon > 0$, we have
\begin{align*}
|A_h(\lambda)| &\leq |\lambda|_S \sum_{d|d_{\lambda}}|c(1)| \tau\left(\frac{|\lambda|_S^2}{d^2}\right) \left(\frac{|\lambda|_S^2}{d^2}\right)^\theta d^{\frac N2-2} \\
& \ll_\epsilon   |\lambda|^{2\theta+1+\epsilon}_S |c(1)| \sum_{d|d_{\lambda}} d^{\frac N2-2-2\theta-\epsilon} \\
& \ll_\epsilon |\lambda|^{2\theta+1+\epsilon}_S |c(1)| d_\lambda^{\frac N2-2-2\theta} \\
& \ll_\epsilon |\lambda|^{2\theta+1+\epsilon}_S d_\lambda^{\frac N2-2-2\theta} ||f||_2r^{\epsilon}\sqrt{\cosh(\pi r/2)} \\
& \ll_\epsilon |\lambda|^{\frac N2-1+\epsilon}_S  ||f||_2 r^{\epsilon}\sqrt{\cosh(\pi r/2)},
%& \leq |\lambda|_S^{\frac N2+2\theta} |c(1)| \ll_\epsilon r^{\epsilon}\sqrt{\cosh(\pi r/2)} |\lambda|_S^{\frac N2+2\theta},
\end{align*}
which completes the proof of the proposition.\\
%\fbox{I find $\tau(c)\ll c^{\epsilon}$ in \cite[p48]{Iw2}. Can we sharpen the above estimate with this?}
\end{proof}

We will require the estimate for the $K$-Bessel function. 

\begin{lemma}
\label{Besselbd}

Let $r \gg 1$ be a real number.

\begin{enumerate}

\item If $1 \ll y < r$, we have
\[
e^{\pi r /2}K_{\ii r}(y) \ll r^{-1/4} (r - y)^{-1/4}.
\]

\item If $y = r + O(r^{1/3})$ and $y \gg 1$, we have
\[
e^{\pi r /2}K_{\ii r}(y) = \pi (2/y)^{1/3} {\rm Ai}(\xi e^{-2\pi i /3}) + O(y^{-2/3}) \ll r^{-1/3},
\]
where $\xi = i(y-r) (-iy/2)^{-1/3}$.

\item If $2r > y > r$, we have
\[
e^{\pi r /2}K_{\ii r}(y) \ll r^{-1/4} (y - r)^{-1/4} \exp( -C r^{-1/2} ( y-r)^{3/2})
\]
for some $C > 0$.

\item If $y \ge 2r$, we have
\[
e^{\pi r /2}K_{\ii r}(y) \ll \exp( -C y)
\]
for some $C > 0$.

\end{enumerate}

\end{lemma}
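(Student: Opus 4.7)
The approach will be to use the Olver-type uniform asymptotic expansion of $K_{\ii r}(y)$ for large order $r$, which expresses the Bessel function in terms of the Airy function and is valid uniformly across the turning point $y = r$. Concretely, setting $z = y/r$ and introducing the Olver variable $\zeta = \zeta(z)$ defined by $\tfrac{2}{3}(-\zeta)^{3/2} = \int_z^1 \sqrt{1-t^2}\, dt/t$ for $0 < z \le 1$, with analytic continuation for $z > 1$, one has
$$K_{\ii r}(rz) = e^{-\pi r/2} \, \pi \left(\frac{4\zeta}{1-z^2}\right)^{1/4} \frac{\mathrm{Ai}(r^{2/3}\zeta e^{-2\pi \ii/3})}{r^{1/3}} \bigl(1 + O(r^{-1})\bigr),$$
uniformly in $z$ in a neighborhood of the positive real axis. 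This identity is the single input from which all four estimates will follow.

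From this master formula, each case of the lemma reduces to the appropriate asymptotic of the Airy function. For part (ii), $y = r + O(r^{1/3})$ places $r^{2/3}\zeta$ in the bounded transition regime where $\mathrm{Ai}(\xi e^{-2\pi \ii/3}) = O(1)$; a Taylor expansion of $\zeta$ at $z = 1$ identifies the stated variable $\xi = \ii(y-r)(-\ii y/2)^{-1/3}$ and simplifies the prefactor to $(2/y)^{1/3}$, giving both the explicit formula and the $r^{-1/3}$ bound. For parts (i) and (iii), $y$ is separated from $r$, so one invokes respectively $\mathrm{Ai}(-x) \ll x^{-1/4}$ and $\mathrm{Ai}(x) \ll x^{-1/4} e^{-\tfrac{2}{3} x^{3/2}}$ for large $x > 0$. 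Since $\zeta \asymp (z-1)$ near the turning point, one has $r^{2/3}|\zeta| \asymp r^{-1/3}|y - r|$; combining the $x^{-1/4}$ factor from the Airy asymptotic with the prefactor $(4|\zeta|/|1-z^2|)^{1/4} \asymp 1$ and tracking the powers of $r$ and $|y-r|$ produces the claimed $r^{-1/4}|y-r|^{-1/4}$ in both cases. The cubic exponential in (iii) comes from $\tfrac{2}{3}(r^{2/3}\zeta)^{3/2} \asymp r^{-1/2}(y-r)^{3/2}$, yielding the constant $C$.

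For part (iv), when $y \ge 2r$, the uniform expansion is overkill, and instead I would invoke the classical large-argument asymptotic $K_{\ii r}(y) = \sqrt{\pi/(2y)}\, e^{-y}(1 + O(1/y))$, valid as $y \to \infty$ uniformly for $r \le y/2$; this gives $e^{\pi r/2} K_{\ii r}(y) \ll e^{-y + \pi y/4} \le \exp(-Cy)$ with $C = 1 - \pi/4 > 0$.

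The only delicate point is the bookkeeping of the change of variables between $z$, $\zeta$, and the actual $y, r$ in each regime, and reconciling the stated powers of $r - y$ and $y - r$ with the Olver prefactor. Since these precise estimates appear in essentially the form stated here in existing work on the sup-norm problem (for instance \cite{Iw-Sa-2} and \cite{BHMM}, and originally in Balogh's work on the uniform asymptotics of Bessel functions), in practice one could simply cite the relevant bounds from there rather than rederive them from scratch.
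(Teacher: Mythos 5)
Your proposal is correct, but it takes a genuinely different route from the paper. The paper treats the four ranges with separate classical asymptotics: part (i) is quoted from Erd\'elyi's oscillatory-region formula 7.13.2 (19) (using $r+y \sim r$), part (ii) from Balogh, and parts (iii)--(iv) are both extracted from Erd\'elyi's monotonic-region formula 7.13.2 (18), whose exponent $\pi r/2 - (y^2-r^2)^{1/2} - r\sin^{-1}(r/y)$ is bounded below by an explicit calculus argument with $f(\rho) = -\pi/2 + \sqrt{\rho^2-1} + \sin^{-1}(\rho^{-1})$, $\rho = y/r$ (giving $f(\rho)\sim(\rho-1)^{3/2}$ for $1<\rho<2$ and $f(\rho)\gg\rho$ for $\rho\ge 2$). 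You instead derive (i)--(iii) from the single uniform Airy-type expansion, which is precisely Balogh's theorem, so your approach is more unified and handles the matching of the ranges automatically; the paper's is more piecemeal but needs only the cruder non-uniform asymptotics plus one elementary computation. Two caveats on your version. First, the power counting as you describe it ($\zeta \asymp z-1$ and prefactor $(4\zeta/(1-z^2))^{1/4} \asymp 1$) is literally valid only near the turning point; for $y$ far from $r$, which occurs in both (i) and (iii), you need the cancellation of $|\zeta|^{1/4}$ from the prefactor against $|\zeta|^{-1/4}$ from the Airy asymptotic, which leaves $r^{-1/2}|1-z^2|^{-1/4} = \bigl(|r-y|(r+y)\bigr)^{-1/4} \asymp r^{-1/4}|r-y|^{-1/4}$ since $r+y \asymp r$; this is exactly the bookkeeping you flag, and it does close. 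Second, the asymptotic $K_{\ii r}(y) = \sqrt{\pi/(2y)}\,e^{-y}(1+O(1/y))$ is \emph{not} uniformly valid for $r \le y/2$ as an asymptotic equality: when $y \asymp r$ the true value is exponentially smaller than $e^{-y}$, because Erd\'elyi's exponent $(y^2-r^2)^{1/2} + r\sin^{-1}(r/y)$ strictly exceeds $y$ there. But you only use the upper-bound direction, and $|K_{\ii r}(y)| \le K_0(y) \ll y^{-1/2}e^{-y}$ follows at once from the integral representation $K_{\ii r}(y) = \int_0^\infty e^{-y\cosh t}\cos(rt)\,dt$, so part (iv) goes through with $C = 1-\pi/4$.
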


\begin{proof}

The bound (i) follows from the asymptotic of Erd\'{e}lyi \cite[7.13.2, (19)]{Erdelyi} (after noting that $r+y \sim r$ in the range under consideration), and formula (ii) is due to Balogh \cite[(8)]{Bal}.  We shall derive (iii) and (iv) from the formula \cite[7.13.2, (18)]{Erdelyi} of Erd\'{e}lyi, which gives
\[
e^{\pi r /2}K_{\ii r}(y) \ll (y^2-r^2)^{-1/4} \exp( \pi r /2 -(y^2 - r^2)^{1/2} - r \sin^{-1}(r/y) ).
\]
To derive (iii) from this, we must show that the argument of the exponential satisfies
\begin{equation}
\label{besselexpt1}
- \pi r/2 + (y^2 - r^2)^{1/2} + r \sin^{-1}(r/y) \gg r^{-1/2} (y-r)^{3/2}
\end{equation}
when $2r > y > r$.  To do this, define $\rho = y/r$.  If we define $f(\rho) = -\pi/2 + (\rho^2-1)^{1/2} + \sin^{-1}(\rho^{-1})$, then we have
\[
- \pi r/2 + (y^2 - r^2)^{1/2} + r \sin^{-1}(r/y) = r f(\rho).
\]
We have $f'(\rho) = \sqrt{\rho^2-1}/\rho \sim \sqrt{\rho - 1}$ when $2 > \rho > 1$, and $f(1) = 0$.  This implies that $f(\rho) \sim (\rho-1)^{3/2}$ in this range, which gives (\ref{besselexpt1}).

To establish (iv), we note that when $\rho \ge 2$ we have $f'(\rho) = \sqrt{1 - \rho^{-2} } \ge 1/2$, which implies that $f(\rho) \ge f(2) + (\rho-2)/2$ in this range.  Because $f(2) > 0$, this implies that $f(\rho) \gg \rho$, and hence that
\[
- \pi r/2 + (y^2 - r^2)^{1/2} + r \sin^{-1}(r/y) = rf(\rho) \gg y.
\]
Hence, we get 
$$e^{\pi r /2}K_{ir}(y) \ll (y^2-r^2)^{-1/4} e^{-Cy} \ll y^{-1/2} e^{-Cy} \ll e^{-Cy},$$
since $y \geq 2r$ and $r \gg 1$.
\end{proof}

%\fbox{Delete the lemma below}
%
%\begin{lemma}\label{K-Bessel-estimate}
%Let $r > 0$ be a real number.
%\begin{enumerate}
%\item For any $\epsilon > 0$, we have the upper bound
%\[
%\cosh(\pi r/2)K_{\sqrt{-1}r}(y)\ll_{\epsilon}
%\begin{cases}
%((1+r)y^{-1})^{\epsilon}&(y\le1+\frac{1}{2}\pi r),\\
%e^{-y+\pi r/2}y^{-1/2}&(y>1+\frac{1}{2}\pi r).
%\end{cases}
%\]
%
%\item We have the lower bound
%$$\int\limits_r^\infty |K_{\sqrt{-1}r}(y)|^2 \frac{dy}y \gg r^{-1}e^{-\pi r}.$$
%\end{enumerate}
%\end{lemma}
%\begin{proof} 
%For the upper bound see \cite[Proposition 9]{Ha-Mi} and \cite[(23)]{Bl-Ho}, and for the lower bound see the estimate from Pg 316 of \cite{Iw-Sa-2}.
%\end{proof}

Finally, we need a lemma to estimate the number of vectors in the lattice $L_h$ with prescribed norms.
\begin{lemma}\label{lambda-estimate-lem}
Let $L_h$ be the even unimodular lattices as above with dimension $N$. Set $k = N/2-1$. Then, for $m \in \Z_{>0}, k' \in \R_{>0}$ and $\epsilon > 0$, we have 
\begin{equation}\label{lambda-esimate-eqn}
\sum_{\substack{\lambda \in L_h \\ |\lambda|^2_S = m}} d_\lambda^{k-k'} \ll_{L, \epsilon} m^{k+\epsilon},
\end{equation}
where we note that the implied constant is independent of $h$.
\end{lemma}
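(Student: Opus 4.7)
The plan is to stratify the sum on the left-hand side by the value of $d_\lambda$, and then reduce the remaining count to bounds on the theta series coefficients of an even unimodular lattice, which are well understood from the theory of modular forms.

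First I would observe that every nonzero $\lambda\in L_h$ can be written uniquely as $\lambda=d\mu$ with $d=d_\lambda\in\Z_{>0}$ and $\mu\in L_h$ primitive (i.e.\ $\mu/n\notin L_h$ for all integers $n>1$). Under this parametrization, the condition $|\lambda|_S^2=m$ becomes $d^2\mid m$ together with $|\mu|_S^2=m/d^2$. Writing $r_{L_h}(n)$ for the total number of $\mu\in L_h$ with $|\mu|_S^2=n$, the sum we wish to estimate is therefore bounded by
\[
\sum_{\substack{\lambda\in L_h\\ |\lambda|_S^2=m}} d_\lambda^{k-k'} \;=\; \sum_{d^2\mid m} d^{k-k'}\cdot\#\{\mu\in L_h \text{ primitive}:\,|\mu|_S^2=m/d^2\} \;\le\; \sum_{d^2\mid m} d^{k-k'}\, r_{L_h}(m/d^2).
\]

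Next I would invoke that, since $L_h$ is an even unimodular lattice of rank $N$, its theta series $\Theta_{L_h}(\tau)=\sum_{\lambda\in L_h} e^{2\pi i|\lambda|_S^2\tau}$ is a holomorphic modular form of weight $N/2$ for $\SL_2(\Z)$. Decomposing $\Theta_{L_h}$ into its Eisenstein component and its cuspidal component, the Eisenstein coefficients are divisor sums of size $\ll_N n^{N/2-1+\epsilon}$ and the cuspidal coefficients are $O(n^{N/4-1/4+\epsilon})$ by the standard Hecke bound, so in particular
\[
r_{L_h}(n) \ll_{L,\epsilon} n^{N/2-1+\epsilon} = n^{k+\epsilon}.
\]
The implied constant may be taken uniform in $h$ because all even unimodular lattices of rank $N$ lie in a single genus (hence are finite in number for given $N$).

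Finally, substituting the bound and factoring out $m^{k+\epsilon}$ gives
\[
\sum_{d^2\mid m} d^{k-k'}\,(m/d^2)^{k+\epsilon} \;=\; m^{k+\epsilon} \sum_{d^2\mid m} d^{-k-k'-2\epsilon} \;\le\; m^{k+\epsilon} \sum_{d=1}^{\infty} d^{-k-k'-2\epsilon}.
\]
Since $k=N/2-1\ge 3$ (recall $8\mid N$) and $k'>0$, the exponent satisfies $k+k'+2\epsilon>1$, so the tail sum converges to a constant depending only on $k'$ and $\epsilon$. This yields the desired bound $\ll_{L,\epsilon} m^{k+\epsilon}$. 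The only mildly delicate point in the argument is the uniformity of the implied constant in $h$, which is handled by the finiteness of the genus of even unimodular lattices of rank $N$; everything else is a routine packaging of the Hecke bound for modular form coefficients with the primitive/imprimitive decomposition of lattice vectors.
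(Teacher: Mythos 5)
Your proof is correct and follows essentially the same route as the paper: stratify by $d_\lambda$, bound the primitive count by the full representation number, and invoke the bound $r_{L_h}(n)\ll_{L,\epsilon}n^{N/2-1+\epsilon}$ coming from the theta series of an even unimodular lattice being a weight-$N/2$ modular form (the paper cites Serre and Bump for the count $\ll_L \tau(M)M^{N/2-1}$, and handles the final $d$-sum via $\tau(m)\ll m^\epsilon$ rather than your convergent series, which is an immaterial difference). Your stated cusp-form exponent $n^{N/4-1/4+\epsilon}$ is not quite the Hecke bound $n^{N/4}$, but since either is dominated by the Eisenstein term $n^{N/2-1}$ for $N\ge 8$, this does not affect the argument.
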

\begin{proof}
We have
\begin{align*}
\sum_{\substack{\lambda \in L_h \\ |\lambda|_S^2 = m}} d_\lambda^{k-k'} &= \sum_{d^2|m} \sum_{\substack{|\lambda|_S^2 = m \\ d_\lambda = d}} d^{k-k'} = \sum_{d^2|m} d^{k-k'} \sum_{\substack{|\lambda|_S^2 = m \\ d_\lambda = d}} 1 \\
&= \sum_{d^2|m} d^{k-k'} \sum_{\substack{|\lambda|_S^2 = m/d^2 \\ d_\lambda = 1}} 1 \leq \sum_{d^2|m} d^{k-k'} \sum_{|\lambda|_S^2 = m/d^2} 1.
\end{align*}
Since $L_h$ is assumed to be an even unimodular lattice of dimension $N$, ~\cite[p.109, Corollary 2]{Se} and \cite[Proposition 1.3.5]{Bu} implies that, for any $M > 0$, we have $\sharp\{\lambda\in L_h \mid |\lambda|_S^2=M\} \ll_L \tau(M) M^{\frac N2-1}$. Note that we have $\tau(M) \ll_\epsilon  M^\epsilon$. Using this we get
\begin{align*}
\sum_{\substack{\lambda \in L_h \\ |\lambda|_S^2 = m}} d_\lambda^{k-k'} &\ll_{L, \epsilon} \sum_{d^2|m} d^{k-k'} \Big( \frac m{d^2} \Big)^{k+\epsilon} = \sum_{d^2|m} \frac{m^{k+\epsilon}}{d^{k+k'-\epsilon}} \\
&\ll_{L, \epsilon} \sum_{d^2|m} m^{k+\epsilon} \leq m^{k+\epsilon} \tau(m) \ll_{L,\epsilon} m^{k+\epsilon},
\end{align*}
as required.
\end{proof}

%\fbox{\begin{minipage}{.9 \linewidth}
%Previously we were getting a worse estimate because we did the following calculation using $d_\lambda \leq |\lambda|_S$. 
%\begin{align*}
%\sum_{\substack{\lambda \in L \\ |\lambda|_S^2 = m}} d_\lambda^{k-k'} &\leq \sum_{\substack{\lambda \in L \\ |\lambda|_S^2 = m}} \sqrt{m}^{k-k'} \leq m^{\frac{k-k'}2} \sum_{\substack{\lambda \in L \\ |\lambda|_S^2 = m}} 1\\
%&\leq C m^{\frac{k-k'}2}  m^{k+\frac 12} = C m^{\frac{3k}2-\frac{k'}2+\frac 12}
%\end{align*}
%We have now done a power saving of $m^{\frac k2}$ i.e. $m^{\frac N4-\frac 12}$. Also, I don't think we can improve on the result of the Lemma, since for $m$ squarefree, we always have $d_\lambda =1$ and so we cannot do better than the estimate we have.
%\end{minipage}}

%Temperarily we let $L_0$ be an even unimodular $\Z$-lattice of rank $8n$ and $\Gamma$ be the discrete subgroup of $G=O(1,8n+1)$ defined by $L=L_0\oplus\Z^2$. 

\subsection*{Upper bound for sup-norm using Fourier expansion of $F_f$}
In this section we will obtain an upper bound for $|F_f(c_\infty^{-1}n(x)a_y)|/||F_f||_2$ using the Fourier expansion (\ref{cusp-Fourier-coeff}) of $F_f$ and the bounds on $||F_f||_2, |A_h(\lambda)|, K_{\sqrt{-1}r}(y)$ and bounds on lattice points obtained in Proposition \ref{Petersson-norm-estimate} and Lemmas \ref{Besselbd} and \ref{lambda-estimate-lem}.
\begin{theorem}\label{Upper-bd-thm}
Let $f \in  S(\SL_2(\Z);-\frac{r^2 + 1}{4})$ be a non-zero Hecke eigenform with Fourier coefficients $c(-m) = \pm c(m)$ for all $m \in \Z$. Let $F_f \in \cM(\Gamma_S, \sqrt{-1}r)$ be the Borcherds theta lift of $f$.  Assume that $r$ is bounded below, i.e., $r \gg 1$. 
%Recall that $\Phi(*,*,f)$ has denoted the theta lift from a Maass cusp form $f$ of level one to a cusp form on $G$. Suppose that $f$ is $L^2$-normalized Hecke eigenform. 
%Let $\lambda_{\Omega}$ be the Laplace eigenvalue of $\Phi(*,*,f)$, i.e. the eigenvalue of the Casimir operator $\Omega$. 
For any $\epsilon > 0, x \in \R^N$, and any cusp $c$ of $\Gamma_S$, the following holds:
\[
\frac{1}{ \| F_f \|_2 } | F_f( c_\infty^{-1} n(x) a_y) | \ll_{\epsilon, N, L} 
\begin{cases}
y^{-N/2 -1 - 2\theta} r^{3N/4 + 1 +2\theta + \epsilon} & 1 \ll y \le r^{11/12}; \\
y^{-N/2 +1 - 2\theta} r^{3N/4 - 5/6 + 2\theta + \epsilon} & r^{11/12} < y \le r/2\pi; \\
e^{-Cy} & r / 2\pi < y. 
\end{cases}
\]
\end{theorem}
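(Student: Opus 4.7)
The plan is to start from the Fourier expansion (\ref{cusp-Fourier-coeff}) at the cusp attached to $c$, apply the triangle inequality, and substitute the two bounds of Proposition \ref{Petersson-norm-estimate}. Combining $\|f\|_2/\|F_f\|_2 \ll \sqrt{\sinh(\pi r/2)}\,r^{-N/4+1/2}$ with $|A_h(\lambda)|/\|f\|_2 \ll_{\epsilon} |\lambda|_S^{2\theta+1+\epsilon} d_\lambda^{N/2-2-2\theta} r^{\epsilon}\sqrt{\cosh(\pi r/2)}$, and using $\sqrt{\sinh(\pi r/2)\cosh(\pi r/2)} \ll e^{\pi r/2}$, I then group the sum by $m=|\lambda|_S^2$ and invoke Lemma \ref{lambda-estimate-lem} with $k=N/2-1$ and $k'=1+2\theta$ to absorb the factor $d_\lambda^{N/2-2-2\theta}$. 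The problem is thereby reduced to estimating, up to the outer prefactor $y^{N/2}r^{-N/4+1/2+\epsilon}$, the single weighted Bessel sum
\[
T(y) := \sum_{m\geq 2} m^{N/2+\theta-1/2+\epsilon}\, e^{\pi r/2}\bigl|K_{\ii r}(4\pi\sqrt{m}\,y)\bigr|.
\]

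When $y>r/(2\pi)$, every term satisfies $4\pi\sqrt{m}\,y>2r$, so Lemma \ref{Besselbd}(iv) gives $e^{\pi r/2}|K_{\ii r}(4\pi\sqrt{m}y)|\ll e^{-C\sqrt{m}y}$; the sum is rapidly convergent and, together with the polynomial prefactor, is absorbed into $e^{-C'y}$ for a slightly smaller constant $C'$. For $1\ll y<r/(2\pi)$, I set $M:=r^2/(16\pi^2 y^2)$, the value of $m$ at which the Bessel argument equals the index, and split $T(y)$ into an oscillatory range $m\leq M(1-Cr^{-2/3})$ on which Lemma \ref{Besselbd}(i) applies, a transition range $|m-M|\lesssim Mr^{-2/3}$ on which (ii) applies, and a super-critical range $m\geq M(1+Cr^{-2/3})$ on which (iii) and (iv) produce Gaussian/exponential decay in $(u-r)$ with $u=4\pi\sqrt{m}\,y$, rendering the last contribution negligible compared with the first two.

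For the oscillatory range I convert the sum to an integral by the change of variable $u=4\pi\sqrt{m}\,y$, $dm=u\,du/(8\pi^2 y^2)$; after this substitution, the integral $\int_{1}^{r-r^{1/3}} u^{2a+1}(r-u)^{-1/4}\,du$ with $a=N/2+\theta-1/2+\epsilon$ is of order $r^{2a+7/4}$, the dominant contribution coming from $u\sim r$. Multiplying out, the oscillatory contribution to $T(y)$ is of order $r^{2a+3/2}y^{-2a-2}$, which combined with the outer prefactor produces the first stated bound $y^{-N/2-1-2\theta}r^{3N/4+1+2\theta+\epsilon}$. For the transition range there are at most $\max(1,Mr^{-2/3})$ integer summands, each of size $M^{a}\sim(r/y)^{2a}$, giving a contribution of order $r^{-1/3}(r/y)^{2a}\max(1,r^{4/3}/y^2)$; translated back this becomes $y^{-N/2-1-2\theta}r^{3N/4+1/2+2\theta+\epsilon}$ when $y\leq r^{2/3}$ (subordinate to the oscillatory bound by a factor $r^{-1/2}$) and the second stated bound $y^{-N/2+1-2\theta}r^{3N/4-5/6+2\theta+\epsilon}$ when $y>r^{2/3}$. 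A direct comparison shows the transition bound overtakes the oscillatory one precisely when $y>r^{11/12}$, which yields the break-point between the first two cases of the theorem.

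The main technical obstacle I anticipate is making the sum-to-integral approximation rigorous near $m=M$. The integer spacing in the variable $u$ is $\sim y^2/\sqrt{m}$, so for $y\ll r^{2/3}$ the transition window $|u-r|\lesssim r^{1/3}$ contains many integers and a Riemann-sum approximation is harmless, while for $y\gtrsim r^{2/3}$ the window contains only $O(1)$ integers, which must be handled individually via (ii). Tracking these two regimes across the junctions $|m-M|\sim Mr^{-2/3}$, and verifying that the Gaussian tail from (iii) cleanly absorbs the post-transition contribution, is the delicate part of the argument; the remaining steps reduce to routine estimation of beta-type integrals.
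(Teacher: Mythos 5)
Your proposal is correct and follows essentially the same route as the paper's proof: reduce via Proposition \ref{Petersson-norm-estimate} and Lemma \ref{lambda-estimate-lem} to the weighted Bessel sum $r^{-N/4+1/2+\epsilon}y^{N/2}\sum_m m^{N/2-1/2+\theta+\epsilon}e^{\pi r/2}K_{\ii r}(4\pi\sqrt{m}y)$, split around the transition point of $K_{\ii r}$, compare the oscillatory part to an integral of $(r-u)^{-1/4}$, and balance the resulting $y^{-2}r^{3/2}$ and $r^{-1/3}$ terms to locate the break at $y=r^{11/12}$. The only differences are organizational (you cut the ranges at $M(1\pm Cr^{-2/3})$ and use the exponential factor of Lemma \ref{Besselbd}(iii) to dispose of the super-critical range, whereas the paper cuts at $r/2$, $r$, $2r$ in the Bessel argument, isolates the single extremal term for part (ii), and reuses the $S_y^{(2)}$ sum-to-integral argument for $r<u\le 2r$), and these do not change any of the exponents.
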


\begin{proof}
Using the Fourier expansion of $F_f$ given by (\ref{cusp-Fourier-coeff}), we see that it suffices to bound
\[
\frac{1}{ \| F_f \|_2 } \sum_{\lambda \in L_h} |A_h(\lambda)| y^{N/2} K_{\ii r}(4 \pi | \lambda|_S y).
\]
Using Proposition \ref{Petersson-norm-estimate} and Lemma \ref{lambda-estimate-lem}, this is bounded by
\begin{equation}
\label{Besselsum}
S_y := r^{-N/4 + 1/2 + \epsilon} y^{N/2} \sum_{m \ge 1} m^{N/2 - 1/2 + \theta + \epsilon} e^{\pi r /2} K_{\ii r}(4 \pi \sqrt{m} y).
\end{equation}
We break the sum in (\ref{Besselsum}) into four ranges and denote $S_y := S_y^{(1)} + S_y^{(2)} + S_y^{(3)} + S_y^{(4)}$, where $S_y^{(1)}$ is obtained by summing over  $4 \pi \sqrt{m} y \le r/2$, i.e. $m \le (r / 8 \pi y)^2$, $S_y^{(2)}$ is obtained by summing over $r/2 < 4 \pi \sqrt{m} y \le r$, i.e. $(r / 8 \pi y)^2 < m \le (r / 4 \pi y)^2$, $S_y^{(3)}$ is obtained by summing over $r < 4 \pi \sqrt{m} y \le 2r$ and $S_y^{(4)}$ is obtained by summing over $2r < 4 \pi \sqrt{m} y$.

{\bf Computing $S_y^{(1)}$}: In this range, the bound (i) from Lemma \ref{Besselbd} becomes $e^{\pi r /2} K_{\ii r}(4 \pi \sqrt{m} y) \ll r^{-1/2}$, so we have 
\begin{align*}
S_y^{(1)} &\ll r^{-N/4 + 1/2 + \epsilon} y^{N/2} r^{-1/2} \sum_{1 \le m \le (r / 8 \pi y)^2} m^{N/2 - 1/2 + \theta + \epsilon} \\
&\ll r^{-N/4 + \epsilon} y^{N/2} (r/y)^{N + 1 + 2\theta + \epsilon} = \frac{r^{\frac{3N}4 + 1 + 2\theta + \epsilon}}{y^{\frac N2 + 1 + 2\theta}}.
\end{align*}

{\bf Computing $S_y^{(2)}$}:  In this range we have $m \sim (r/y)^2$, so that
\begin{equation}
\label{Besselsum2}
S_y^{(2)} \ll 
r^{-N/4 + 1/2 + \epsilon} y^{N/2} (r/y)^{N - 1 + 2\theta + \epsilon} \sum_{(r / 8 \pi y)^2 < m \le (r / 4 \pi y)^2} e^{\pi r /2} K_{\ii r}(4 \pi \sqrt{m} y).
\end{equation}

If the sum on the right hand side of (\ref{Besselsum2}) is nonempty, it's highest term is $m_* = \lfloor (r / 4 \pi y)^2 \rfloor$.  Lemma \ref{Besselbd} implies that $e^{\pi r /2} K_{\ii r}(4 \pi \sqrt{m_*} y) \ll r^{-1/3}$ for all $y$, and so applying this to bound the contribution of $m_*$ to the sum gives
\[
\sum_{(r / 8 \pi y)^2 < m \le (r / 4 \pi y)^2} e^{\pi r /2} K_{\ii r}(4 \pi \sqrt{m} y) \ll r^{-1/3} + \sum_{(r / 8 \pi y)^2 < m \le m_* - 1} e^{\pi r /2} K_{\ii r}(4 \pi \sqrt{m} y).
\]
For the remaining terms in the sum, we apply Lemma \ref{Besselbd} (i), which gives
\[
\sum_{(r / 8 \pi y)^2 < m \le m_* - 1} e^{\pi r /2} K_{\ii r}(4 \pi \sqrt{m} y) \ll r^{-1/4} \sum_{(r / 8 \pi y)^2 < m \le m_* - 1} (r - 4 \pi \sqrt{m} y)^{-1/4}.
\]
Because the function $x \to (r - 4 \pi \sqrt{x} y)^{-1/4}$ is increasing on the interval $((r / 8 \pi y)^2, m_*)$, we may bound the sum by an integral as follows:
\begin{align*}
r^{-1/4} \sum_{(r / 8 \pi y)^2 < m \le m_* - 1} (r - 4 \pi \sqrt{m} y)^{-1/4} & < r^{-1/4} \int_{(r / 8 \pi y)^2}^{m_*} (r - 4 \pi \sqrt{x} y)^{-1/4} dx \\
& \le r^{-1/4} \int_{(r / 8 \pi y)^2}^{(r / 4 \pi y)^2} (r - 4 \pi \sqrt{x} y)^{-1/4} dx.
\end{align*}
We then have
\begin{align*}
r^{-1/4} \int_{(r / 8 \pi y)^2}^{(r / 4 \pi y)^2} (r - 4 \pi \sqrt{x} y)^{-1/4} dx & = r^{-1/4} \int_{ r / 8 \pi y }^{r / 4 \pi y} (r - 4 \pi u y)^{-1/4} 2u du \\
& \le r^{3/4} /2\pi y \int_{ r / 8 \pi y }^{r / 4 \pi y} (r - 4 \pi u y)^{-1/4} du \\
& = r^{3/4} /8 \pi^2 y^2 \int_{r/2}^r (r - u)^{-1/4} du \\
& \ll r^{3/2} y^{-2}.
\end{align*}
Combining these gives
\[
S_y^{(2)} \ll \frac{r^{\frac{3N}4 - \frac 12 + 2\theta + \epsilon}}{y^{\frac N2 - 1 + 2\theta}} ( r^{-1/3} + r^{3/2} y^{-2}).
\]
We note that this term dominates the contribution from $S_y^{(1)}$.

{\bf Computing $S_y^{(3)}$}:  Following the computation of  $S_y^{(2)}$, we can see that we get the same bound for $S_y^{(3)}$ as we got for $S_y^{(2)}$ above. To see this, note that we again have $m \sim (r/y)^2$, so it suffices to bound the sum
\[
\sum_{(r / 4 \pi y)^2 < m \le (r / 2 \pi y)^2} e^{\pi r /2} K_{\ii r}(4 \pi \sqrt{m} y).
\]
As in the case of $S_y^{(2)}$, we may bound the extremal term in this sum by $r^{-1/3}$.  For the other terms, we may use the bound $e^{\pi r /2} K_{\ii r}(4 \pi \sqrt{m} y) \ll r^{-1/4} (y-r)^{-1/4}$ coming from Lemma \ref{Besselbd} (iii), which gives a contribution of $\ll r^{3/2} y^{-2}$ as in the case of $S_y^{(2)}$.

{\bf Computing $S_y^{(4)}$}:  Here, we use part (iv) of Lemma \ref{Besselbd}, which gives
\[
S_y^{(4)} \ll 
r^{-N/4 + 1/2 + \epsilon} y^{N/2} \sum_{(r / 2 \pi y)^2 < m} m^{N/2 - 1/2 + \theta + \epsilon} \exp(-C \sqrt{m} y).
\]
Because $y \gg 1$ we have $\sqrt{m} y \gg \sqrt{m} + y$, so
\begin{align*}
S_y^{(4)} & \ll 
r^{-N/4 + 1/2 + \epsilon} y^{N/2} e^{-Cy} \sum_{(r / 2 \pi y)^2 < m} m^{N/2 - 1/2 + \theta + \epsilon} \exp(-C \sqrt{m}) \\
& \ll r^{-N/4 + 1/2 + \epsilon} y^{N/2} e^{-Cy} \sum_{m \ge 1} m^{N/2 - 1/2 + \theta + \epsilon} \exp(-C \sqrt{m}) \\
& \ll r^{-N/4 + 1/2 + \epsilon} e^{-Cy} \ll e^{-Cy},
\end{align*}
where the final inequality follows because $y \gg r$.

Finally, we combine these estimates to obtain the proposition.  First note that, when $y > r/2\pi$, the sums $S_y^{(1)}, S_y^{(2)}$ and $S_y^{(3)}$ are empty, and so we get
$$\frac{|F_f(c_\infty^{-1}n(x)a_y)|}{||F_f||_2} \ll S_y = S_y^{(4)} \ll e^{-Cy},$$
as required.   When $y \leq r/2\pi$, we can check that the contribution from $S_y^{(2)}$ dominates the one from $S_y^{(4)}$, and hence we get 
$$\frac{|F_f(c_\infty^{-1}n(x)a_y)|}{||F_f||_2} \ll S_y \ll \frac{r^{\frac{3N}4 - \frac 12 + 2\theta + \epsilon}}{y^{\frac N2 - 1 + 2\theta}} ( r^{-1/3} + r^{3/2} y^{-2}).$$
%Combining the sums for $S_y^{(1)}, \cdots, S_y^{(4)}$, we get
%$$S_y \ll \frac{r^{\frac{3N}4 - \frac 12 + 2\theta + \epsilon}}{y^{\frac N2 - 1 + 2\theta + \epsilon}} ( r^{-1/3} + r^{3/2} y^{-2}) + r^{-N/4 + 1/2 + \epsilon} y^{N/2} e^{-Cy}.$$
%
%
%Combining these four ranges, we obtain a total bound for the sum in (\ref{Besselsum}) of
%\[
%\sum_{m \ge 1} m^{N/2 - 1/2 + \theta} e^{\pi r /2} |K_{\ii r}(4 \pi \sqrt{m} y)| \ll (r/y)^{N - 1 + 2\theta} ( r^{-1/3} + r^{3/2} y^{-2}).
%\]
%Note that the powers of $r$ in the expression above are positive, so the term $e^{-Cy}$ from range 4 is dominated by the contributions from the other ranges.  This yields a bound for $F_f$ of
%\[
%\frac{1}{ \| F_f \|_2 } | F_f( n_x a_y) | \ll r^{3N/4 - 1/2 + 2\theta} y^{-N/2 +1 - 2\theta} ( r^{-1/3} + r^{3/2} y^{-2}).
%\]
The term $y^{-2} r^{3/2}$ dominates when $y \le r^{11/12}$, while $r^{-1/3}$ dominates when $y > r^{11/12}$, which gives the first two bounds of the theorem.
\end{proof}

\subsection*{Lower bound for the sup-norm}

We now use the Fourier expansion to obtain a lower bound for the sup norm of $F_f$.  When doing this, it will be convenient to work in a cusp $c$ such that the corresponding unimodular lattice $L_h$ has a vector of length one.  As all even unimodular lattices form a single genus, we may do this by taking $L_h$ to be a sum of copies of the $E_8$ lattice.

Recall again that we have the Fourier expansion
$$F_f(c_\infty^{-1} n(x)a_y) = \sum\limits_{\lambda \in L_h \backslash \{0\}} A_h(\lambda) y^{\frac N2} K_{\ii r}(4 \pi |\lambda|_Sy) \exp(2 \pi \ii {}^t\lambda S (\gamma^{-1}x)).$$
Fix $\lambda_0 \in L_h$ to be an element of norm 1.  We then have $A_h(\lambda_0) = c(-1) = \pm c(1) \neq 0$. Using the above Fourier expansion of $F_f$, the Cauchy-Schwartz inequality, and the fact that $L$ is a unimodular lattice, we get
\begin{align*}
||F_f||_\infty &= {\rm vol}(\R^N/L)^{-1} \int\limits_{\R^N/L} ||F_f||_\infty dx \\
&\geq  \int\limits_{\R^N/L} |F_f(c_\infty^{-1} n(x)a_y)| |\exp(-2\pi\sqrt{-1}{}^t\lambda_0 S (\gamma^{-1}x))| dx \\
&\geq \Big|\int\limits_{\R^N/L} F_f(c_\infty^{-1} n(x)a_y) \exp(-2\pi\sqrt{-1}{}^t\lambda_0 S (\gamma^{-1}x)) dx \Big| \\
&= |A_h(\lambda_0) y^{N/2} K_{\sqrt{-1}r}(4\pi y)|.
\end{align*}
By part (ii) of Lemma \ref{Besselbd}, there is a value of $y$ such that $4\pi y  = r + O(r^{1/3})$, and $K_{\sqrt{-1}r}(4\pi y) \gg e^{-\pi r /2} r^{-1/3}$.  Choosing this value of $y$ and using $|A_h(\lambda_0)| =  |c(1)|$ gives
\begin{equation}\label{lower-bound-formula}
||F_f||_\infty \gg  |c(1)| r^{N/2-1/3} e^{-\pi r /2}.
\end{equation}
The lower bound of Theorem \ref{supnorm-thm-intro} now follows from (\ref{lower-bound-formula}) using the bound (\ref{c(1)-estimates}), and part i) of Proposition \ref{Petersson-norm-estimate}.

%Let us show Theorem \ref{supnorm-thm-intro} from the introduction.  
%Recall that the eigenvalue $\Lambda$ of a theta lift $F_f$ is $-\frac{1}{2N}(r^2+\frac{N^2}{4})$. 
%We can apply (\ref{eqn2}), Theorem \ref{Upper-bd-thm} and the inequalities 
%\begin{align*}
%&r^2 < r^2 + N^2/4 \leq (1+N^2/4C^2) r^2,~4(r^2+N^2/4)>r^2+N^2\\
%&(1+\pi r/2)^2\ll r^2+N^2/4~(\text{with the implied constant independent of $r$})
%\end{align*}
%for $r > C$, to get the result of Theorem \ref{supnorm-thm-intro}.

\subsection{Upper bounds using the pre-trace formula}\label{Tr-formula-sec}

In this section, we use a pre-trace inequality to obtain upper bounds on the lifted form $F_f$. The bound we prove holds for any square-integrable Laplace eigenfunction on any hyperbolic orbifold $X$ of finite volume with the Laplacian $\Delta$. To state it, we will need the notion of the height of a point $x \in X$ in the cusp, denoted by ${\rm ht}(x)$, which we recall in this section.  We note that only for this section, $\ii r$ will denote the spectral parameter of an eigenfunction on $X$, instead of the parameter of the Maass form $f$.  We note that these two spectral parameters are the same size, so this should not lead to any confusion.

\begin{theorem}
\label{trace-thm}

Let $X$ be a finite-volume hyperbolic orbifold of dimension $N+1$.  Let $\psi \in L^2(X)$ be an $L^2$-normalized Laplace eigenfunction with spectral parameter $\ii r$, so that $(\Delta + r^2 - N^2/4)\psi = 0$.  We have
\[
\psi(x) \ll (1 + |r|)^{N/2} + {\rm ht}(x)^{N/2} (1 + |r|)^{N/4}, \quad x \in X.
\]

\end{theorem}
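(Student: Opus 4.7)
The plan is to apply a pre-trace inequality on the universal cover $\mathbb{H}^{N+1}$. Write $X = \Gamma\backslash\mathbb{H}^{N+1}$ and fix a lift $\tilde x$ of $x$. First I would construct a point-pair invariant test kernel $k(d(\tilde x,\tilde y))$ whose Harish--Chandra spherical transform $\widehat k$ is non-negative on the full $L^2$-spectrum of $\Delta$ on $X$ (including the exceptional and Eisenstein parameters), and satisfies $\widehat k(r)\ge 1$. The usual recipe is $\widehat k = |\widehat h|^2$ for an even smooth bump $\widehat h$ of unit width concentrated near $\pm r$. Expanding $\sum_{\gamma}k(d(\tilde x,\gamma\tilde x))$ spectrally via the Selberg pre-trace formula and discarding all spectral terms other than the one associated to $\psi$ (all terms are non-negative) gives
\[
|\psi(x)|^2 \le \sum_{\gamma\in\Gamma} k(d(\tilde x,\gamma\tilde x)).
\]

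Next I would record the standard pointwise estimates for $k$, obtained from Harish--Chandra's inverse transform together with stationary-phase analysis of the spherical functions on $\mathbb{H}^{N+1}$:
\[
k(d) \ll (1+|r|)^N \bigl(1+(1+|r|)d\bigr)^{-N/2} \text{ for } d\le 1, \qquad k(d) \ll e^{-cd} \text{ for } d\ge 1.
\]
In particular $k(d)\ll (1+|r|)^N$ for $d\le (1+|r|)^{-1}$ and $k(d)\ll (1+|r|)^{N/2}d^{-N/2}$ on the intermediate range $(1+|r|)^{-1}\le d\le 1$. The contribution $\gamma=e$ produces the leading $(1+|r|)^N$ term; the rapid decay makes all non-cuspidal $\gamma$ at bounded height contribute only a further $O((1+|r|)^N)$.

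The remaining analysis is a lattice-point count in the cusp. When $\mathrm{ht}(x)$ is large, the $\gamma$ with $d(\tilde x,\gamma\tilde x)=O(1)$ come, up to the finite-index point-group of the Bieberbach cusp stabilizer, from the translation lattice $\Lambda$ of rank $N$ in the unipotent radical of the parabolic fixing the cusp. For $v\in\Lambda$ with $|v|\ll \mathrm{ht}(x)$ one has $d(\tilde x,\gamma_v\tilde x)\asymp |v|/\mathrm{ht}(x)$. Decomposing dyadically into shells $|v|\sim R$: for $R\le\mathrm{ht}(x)/(1+|r|)$, the $\sim R^N$ points in each shell each contribute $\ll (1+|r|)^N$, summing to $\ll\mathrm{ht}(x)^N$; for $\mathrm{ht}(x)/(1+|r|)\le R\le\mathrm{ht}(x)$, each contributes $\ll (1+|r|)^{N/2}(\mathrm{ht}(x)/R)^{N/2}$, and the dyadic sum is dominated by $R\sim\mathrm{ht}(x)$, giving $\ll\mathrm{ht}(x)^N(1+|r|)^{N/2}$. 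Larger $R$ is absorbed by the rapid decay of $k$. Combining,
\[
|\psi(x)|^2 \ll (1+|r|)^N + \mathrm{ht}(x)^N(1+|r|)^{N/2},
\]
and taking square roots yields the theorem.

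The main technical obstacle is establishing the spherical-transform bound in the transitional range $d\sim (1+|r|)^{-1}$, where neither the trivial bound nor the large-$d$ asymptotic is sharp; this requires Harish--Chandra's integral representation together with a careful stationary-phase / van der Corput argument on $\mathbb{H}^{N+1}$. The positivity of $\widehat k$ across the full spectrum (including any exceptional small eigenvalues and the Eisenstein continuous spectrum) is handled automatically by the $|\widehat h|^2$ construction, and the lattice-point count, though combinatorially routine, must be checked uniformly in both $|r|$ and $\mathrm{ht}(x)$.
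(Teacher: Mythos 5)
Your proposal is correct and follows essentially the same route as the paper: a pre-trace inequality with a test function of the form $\widehat{k}=|\widehat{h}|^2$ nonnegative on the tempered and complementary spectrum, the spherical-function bound $k(d)\ll r^N(1+rd)^{-N/2}$, reduction to the cusp stabilizer via horoball disjointness and the Bieberbach structure of $\Gamma_\xi$, and a lattice-point count using $d(\gamma_v \tilde x,\tilde x)\asymp \|v\|/\mathrm{ht}(x)$. The only differences are cosmetic (dyadic shells versus the paper's direct two-range split of the lattice sum, and exponential decay versus compact support of $k$).
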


We shall assume in the proof that $r > 1$, as the other cases are similar.

\subsubsection*{Background on hyperbolic geometry}
\label{hyperbolic-sec}

We let $G^0$ be the group of isometries of $H_N$, which can be identified with the index two subgroup of ${\rm O}(1,N)$ preserving the upper sheet of the two-sheeted hyperboloid.  Let $d$ be the standard distance function on $H_N$, and let $\partial H_N \simeq S^N$ denote the boundary sphere.

Let $X = \Gamma \backslash H_N$, where $\Gamma < G^0$ is a lattice.  Let $O(\Gamma) \subset \partial H_N$ denote the set of fixed points of parabolic elements of $\Gamma$.  We fix a set of representatives $\Xi$ for the $\Gamma$-orbits in $O(\Gamma)$, which can be identified with the set of cusps of $X$.  We let $\Gamma_\xi$ be the stabilizer of $\xi \in \Xi$ in $\Gamma$.  For each $\xi \in \Xi$ we choose a horoball $B_\xi$ tangent to the boundary at $\xi$, and denote $V_\xi = \Gamma_\xi \backslash B_\xi$.  By \cite[Thm 12.7.4]{Rat}, we may choose each $B_\xi$ so that the following hold:

\begin{enumerate}

\item $V_\xi$ is mapped isometrically to its image in $X$.

\item We have $X = X_0 \coprod V_\xi$, where $X_0$ is compact.

\item If $\gamma \in \Gamma  - \Gamma_\xi$, then $B_\xi \cap \gamma B_\xi = \emptyset$.

\end{enumerate}
\noindent
We note that \cite{Rat} states this theorem for manifolds, rather than orbifolds, but one may easily obtain the result for an orbifold by using Selberg's lemma to pass to a finite-index torsion-free subgroup of $\Gamma$.  We now define the height function ${\rm ht} : X \to \R_{\ge 1}$.  If $x \in X_0$, we set ${\rm ht}(x) = 1$.  Next, suppose that $x \in V_\xi$ for some $\xi$.  Let $B_\infty$ denote the standard horoball $\{ (x,y) : x \in \R^N, y \ge 1 \}$, and choose $g_\xi \in G^0$ such that $g_\xi B_\xi = B_\infty$.  We define ${\rm ht}(x) = y(g_\xi x)$~(the $y$-coordinate of $g_\xi x$), which is independent of the choice of $g_\xi$.  This has the key property that for any $C > 0$, the set of points $x \in X$ with ${\rm ht}(x) \le C$ is compact.

\subsubsection*{Test functions}

In this section, we construct a test function for use in the pre-trace inequality.  We shall do this using the Harish-Chandra transform, which we now recall.  Define $\varphi_s$ to be the standard spherical function on $H_N$ or $G^0$ with spectral parameter $s$.  We continue to normalize $s$ so that $\sqrt{-1}\R$ is the tempered axis.  Let $K \simeq {\rm O}(N)$ be the standard maximal compact subgroup of $G^0$.  For a $K$-biinvariant function $k \in C^\infty_c(G^0)$, we define its Harish-Chandra transform by
\[
\widehat{k}(s) = \int_{G^0} k(g) \varphi_s(g) dg.
\]
This is inverted by
\[
k(g) = C_N \int_{\sqrt{-1}\R} \widehat{k}(s) \varphi_s(g) |c(s)|^{-2} ds
\]
for a constant $C_N$, where $c(s)$ is Harish-Chandra's $c$-function.  We may now define the test function we shall use.

\begin{lemma}
\label{testfn}

There exists a $K$-biinvariant function $k_r \in C^\infty_c(G^0)$ with the following properties:

\begin{enumerate}[label=(\roman*)]

\item\label{P1} $k_r$ is supported in a fixed compact set that is independent of $r$.

\item\label{P2} $\widehat{k}_r(s) \ge 0$ for $s \in \sqrt{-1}\R \cup (0,N/2]$.

\item\label{P3} $\widehat{k}_r(\ii r) = 1$.

\item\label{P4} $k_r( g) \ll r^{N}(1 + r d(g,e))^{-N/2}$.  In particular, $\| k_r \|_\infty \ll r^{N}$.

\end{enumerate}

\end{lemma}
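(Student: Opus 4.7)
The plan is to build $k_r$ via the inverse spherical transform on the rank-one symmetric space $G^0/K$, choosing its Harish-Chandra transform $\hat k_r$ to be the square of a Paley-Wiener function concentrated near the spectral parameter $\ii r$. Setting $\hat k_r = P_r^2$ will make the sign condition (ii) automatic, while Gangolli's Paley-Wiener theorem for the spherical transform will convert a uniform exponential-type bound on $\hat k_r$ into the uniform compact support demanded by (i).

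Concretely, fix once and for all an even nonnegative $\psi \in C_c^\infty(\R)$ supported in $[-\delta,\delta]$ with $\int_\R \psi > 0$, and denote the entire extension of its Euclidean Fourier transform $\widetilde\psi(\xi)=\int_\R\psi(x)e^{-\ii\xi x}\,dx$; Paley-Wiener gives $|\widetilde\psi(\xi)| \ll_n (1+|\xi|)^{-n} e^{\delta |\Im \xi|}$. Define
\[
P_r(s) := \widetilde\psi(-\ii s - r) + \widetilde\psi(-\ii s + r), \qquad \hat k_r(s) := P_r(s)^2.
\]
Evenness of $\psi$ makes $\widetilde\psi$, hence $P_r$ and $\hat k_r$, even in $s$; the bound above yields $|\hat k_r(s)| \ll (1+|s|)^{-n} e^{2\delta|\Re s|}$, so Gangolli's theorem produces a $K$-biinvariant $k_r \in C_c^\infty(G^0)$ supported in $\{g : d(g,e) \le 2\delta\}$ independently of $r$, establishing (i). For (ii), on the tempered axis $P_r(\ii t) = \widetilde\psi(t-r) + \widetilde\psi(t+r) \in \R$, so $\hat k_r(\ii t) \ge 0$; on the exceptional interval $\sigma \in (0, N/2]$ a direct computation gives $P_r(\sigma) = 2 \int_\R \psi(x) e^{-\sigma x} \cos(rx)\, dx \in \R$, so again $\hat k_r(\sigma) \ge 0$. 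For (iii), $\hat k_r(\ii r) = (\widetilde\psi(0) + \widetilde\psi(2r))^2 \to \widetilde\psi(0)^2 > 0$ as $r \to \infty$ by the Schwartz decay of $\widetilde\psi$, so rescaling $k_r$ by the reciprocal of this scalar (bounded above and below uniformly for $r \gg 1$) normalizes $\hat k_r(\ii r)=1$ without affecting the other properties.

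For (iv), apply the inverse spherical transform
\[
k_r(a_u) = C_N \int_{-\infty}^\infty \hat k_r(\ii t)\, \varphi_{\ii t}(a_u)\, |c(\ii t)|^{-2}\, dt.
\]
For the $(N+1)$-dimensional hyperbolic space the Plancherel density satisfies $|c(\ii t)|^{-2} \ll (1+|t|)^N$, while the Schwartz decay of $\widetilde\psi$ localizes $\hat k_r(\ii t)$ on $\bigl||t|-r\bigr| = O(1)$ up to polynomially negligible error, where $|t| \sim r$. Combining this with the standard bound $|\varphi_{\ii t}(a_u)| \ll (1 + |t|u)^{-N/2}$ for the hyperbolic spherical function---which follows from its classical oscillatory integral representation by stationary phase---and integrating a unit-length $t$-range against $\widetilde\psi(t-r)^2$ gives $|k_r(a_u)| \ll r^N (1 + ru)^{-N/2}$, which is (iv).

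The main technical obstacle is the uniform spherical-function estimate $|\varphi_{\ii t}(a_u)| \ll (1 + |t|u)^{-N/2}$: it has to hold sharply for $t$ of size $\sim r$ uniformly across the stationary-phase transition $tu \sim 1$, which requires a careful stationary-phase analysis of the classical integral representation of $\varphi_{\ii t}$ on $H_N$. One must simultaneously verify that the tail contribution from $\bigl||t|-r\bigr|$ large in the inversion integral remains negligible after paying the $|c(\ii t)|^{-2} \sim |t|^N$ polynomial weight; this is handled by the super-polynomial decay of the Schwartz function $\widetilde\psi$.
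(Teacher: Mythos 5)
Your construction is essentially identical to the paper's: the authors also take $\widehat{k}_r = h_r^2$ with $h_r(s) = h(r-s)+h(r+s)$ for a Paley--Wiener function $h$, get uniform compact support from the Gangolli--Varadarajan theorem, deduce (ii) from realness of $h_r$ on $\R$ and on $\ii\R$, normalize for (iii), and prove (iv) by inverting the transform against $|c(s)|^{-2}\ll (1+|s|)^N$ and the spherical function bound $\varphi_{\ii t}(a_u)\ll(1+|t|u)^{-N/2}$. The only difference is that you present that spherical function estimate as the main technical obstacle still to be established, whereas the paper simply cites it (Marshall, \emph{$L^p$ norms of higher rank eigenfunctions}, Theorem 1.3, or Duistermaat plus stationary phase); it is a known result, so your argument is complete modulo that citation.
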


\begin{proof}

Let $h \in C^\infty(\C)$ be a function of Paley--Wiener type (i.e. the Fourier transform of a function in $C^\infty_c(\R)$) that is real and non-negative on $\R$, and satisfies $h(0) = 1$. Define $h_r$ by $h_r(s) = h(r - s) + h(r + s)$. 
$h_r$ is real valued on $\R$, and satisfies $h_r(r) \ge 1$.  We also have $h_r(\overline{s}) = \overline{h}_r(s)$ and $h_r(s) = h_r(-s)$ so $h_r$ is real-valued on $\ii \R$.

Define $k_r$ to be the $K$-biinvariant function on $G^0$ satisfying $\widehat{k}_r(\ii s) = h_r^2(s)$.  Because $h_r$ is of Paley-Wiener type, \ref{P1} follows from the Paley-Wiener theorem of Gangolli--Varadarajan~(cf.~\cite[Theorem 6.6.8]{GV}). For $s \in \ii \R \cup (0,N/2]$, we have that $h_r(s)$ is real, and hence  $\widehat{k}_r(s) = h_r^2(s) \ge 0$.  We also have $\widehat{k}_r(\ii r) = h_r^2(r) \ge 1$, and so we may arrange that $\widehat{k}_r(\ii r) = 1$ by scaling $h_r$.

Finally, \ref{P4} follows from inverting the Harish-Chandra transform and applying bounds for the spherical function.  We have
\[
k_r(g) = \int_{\ii \R} \widehat{k}_r(s) \varphi_s(g) |c(s)|^{-2} ds.
\]
We have $|c(s)|^{-2} \ll (1 + |s|)^{N}$ for $s \in \ii \R$.  By \cite[Theorem 1.3]{Mar}, or by applying \cite{Duistermaat} together with stationary phase, we have
\[
\varphi_{s}(g) \ll (1 + |s| d(g,e))^{-N/2}
\]
for all $s \in \ii \R$ and $g$ in the support of $k_r$, which implies
\[
k_r(g) \ll \int_{\ii \R} \widehat{k}_r(s) (1 + |s| d(g,e))^{-N/2} (1 + |s|)^{N} ds.
\]
The rapid decay of $\widehat{k}_r(s)$ away from $s = \pm \ii r$ effectively truncates the integral to the region where $s \sim \pm \ii r$, which gives \ref{P4}.

\end{proof}

\subsubsection*{The pre-trace inequality}

Let $k_r$ be as in Lemma \ref{testfn}.  The fundamental inequality we shall use to bound $\psi$ is
\begin{equation}
\label{pretrace}
| \psi(x) |^2 \le \sum_{\gamma \in \Gamma} k_r(x^{-1} \gamma x).
\end{equation}
This may be derived from the pre-trace formula, by using the positivity property \ref{P2} of $\hat{k}_r$ to drop all terms on the spectral side other than $| \psi(x) |^2$ (including the continuous spectrum).  For this we remark that the parameters of the discrete spectrum are contained in $\sqrt{-1}\R\cup(0,N/2]$, which parametrize equivalence classes of irreducible unitary spherical principal series representations of $O(1,N)$ together with spherical complimentary series representations. To confirm this fact on the representation theory of $O(1,N)$ we refer to \cite{Hi} and \cite[pp31--32, Remarks]{HT} for instance.

It may also be proved in an elementary way by an application of Cauchy--Schwartz and unfolding, see for instance \cite[Lemma 6.5]{BrumleyMar}, with the test function $\omega$ there taken to be the function $k_r^0$ satisfying $\widehat{k}_r^0 = h_r$ so that $k_r^0 * (k_r^0)^* = k_r$.

Let $R > 0$ be a constant, independent of $r$, such that the support of $k_r$ is contained in the ball of radius $R$ about the origin $(0,1)$ in $H_N$.  For $x$ in any fixed compact subset of $X$, (\ref{pretrace}) and \ref{P4} give $|\psi(x)| \ll r^{N/2}$, so we may assume that ${\rm ht}(x) > e^R > 1$.  In particular, $x$ is contained in a cusp neighbourhood $V_\xi$, which we shall assume to be fixed for the rest of the proof.  Moreover, until further notice we identify $x$ with a choice of lift $x \in B_\xi$.  Under this assumption, we shall show that only $\gamma \in \Gamma_\xi$ contribute to (\ref{pretrace}).  Indeed, suppose that $\gamma \in \Gamma$ satisfies $k_r(x^{-1} \gamma x) \neq 0$.  This implies that $d(\gamma x, x) \le R$.  This, together with ${\rm ht}(x) > e^R$, implies that $\gamma x \in B_\xi$, so that $\gamma B_\xi \cap B_\xi \neq \emptyset$, and hence that $\gamma \in \Gamma_\xi$ as required.

We now assume that our cusp $\xi$ is the standard point at infinity, and denote $\Gamma_\xi$ and $B_\xi$ by $\Gamma_\infty$ and $B_\infty$, as the proof for the other cusps is similar. We therefore have
\[
| \psi(x) |^2 \le \sum_{\gamma \in \Gamma_\infty} k_r(x^{-1} \gamma x).
\]
We next apply our upper bound \ref{P4} for $k_r$, which gives
\[
| \psi(x) |^2 \ll r^{N} \sum_{ \substack{ \gamma \in \Gamma_\infty \\ d(\gamma x,x) < R } } (1 + r d(\gamma x,x))^{-N/2}.
\]
We may identify $\Gamma_\infty$ with a lattice in ${\rm Isom}(\R^N)$.  By \cite[Thm 7.5.2]{Rat}, $\Gamma_\infty$ has a finite index subgroup of translations of rank $N$, which we identify with a lattice $L \subset \R^N$.  The action of $L$ on $H_N$ will be written additively.  We let $\gamma_1, \ldots, \gamma_k$ be coset representatives for $L \backslash \Gamma_\infty$, and define $x_i = \gamma_i x$.  We therefore have
\[
| \psi(x) |^2 \ll r^{N} \sum_{i = 1}^k \sum_{ \substack{ \ell \in L \\ d(\ell + x_i,x) < R } } (1 + r d(\ell + x_i,x))^{-N/2}.
\]
Theorem \ref{trace-thm} now follows from the following lemma.

\begin{lemma}

Let $v_1, v_2 \in \R^N$, and let $x_i = (v_i, y) \in H_N$.  We have
\[
\sum_{ \substack{ \ell \in L \\ d(\ell + x_1,x_2) < R } } (1 + r d(\ell + x_1,x_2))^{-N/2} \ll 1 + y^N r^{-N/2}.
\]

\end{lemma}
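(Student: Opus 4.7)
The plan is to translate the hyperbolic distance into Euclidean data, bound the lattice-point counting function, and then perform Abel summation against the weight $(1 + rt)^{-N/2}$. Since $x_1 = (v_1, y)$ and $x_2 = (v_2, y)$ share a common height, the standard formula in the upper half-space model gives
\[
\cosh d(\ell + x_1, x_2) = 1 + \frac{|\ell + v_1 - v_2|^2}{2 y^2},
\]
so $d(\ell + x_1, x_2) \le t$ is equivalent to $|\ell + w| \le y \sqrt{2(\cosh t - 1)}$, where $w := v_1 - v_2$.

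Next I would bound the counting function $N(t) := \#\{\ell \in L : d(\ell + x_1, x_2) \le t\}$ by a standard volume comparison: the number of points of a translate of $L$ in a Euclidean ball of radius $\sigma$ is $\ll_L 1 + \sigma^N$ uniformly in the center. Since $\cosh t - 1 \asymp t^2$ for $t \in [0, 1]$ and $\cosh t - 1 = O(1)$ for $t \in [0, R]$, I would conclude $N(t) \ll_L 1 + (yt)^N$ for $t \in [0, 1]$ and $N(t) \ll_L 1 + y^N$ for $t \in [0, R]$.

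Now Abel summation converts the target sum into
\[
S = (1 + rR)^{-N/2} N(R) + \frac{Nr}{2} \int_0^R (1 + rt)^{-N/2 - 1} N(t)\, dt,
\]
and I would estimate each piece separately. The boundary term is $\ll r^{-N/2}(1 + y^N)$. For the integral, the ``$1$'' contribution to $N(t)$ produces $r \int_0^\infty (1 + rt)^{-N/2 - 1} dt = O(1)$. The ``$(yt)^N$'' contribution on $[0, 1]$, after the change of variable $u = rt$, becomes
\[
y^N r^{-N} \int_0^r u^N (1 + u)^{-N/2 - 1}\, du \ll y^N r^{-N/2},
\]
since the integrand behaves like $u^{N/2 - 1}$ for $u \gg 1$. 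The remaining contribution of $N(t)$ on $[1, R]$ is $\ll (1 + y^N) r^{-N/2}$, which is absorbed into the above. Combining yields $S \ll 1 + y^N r^{-N/2}$.

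The main delicacy is arranging the regimes so that the exponent in the $y^N$ term comes out to $r^{-N/2}$: a miscalibrated split at the wrong scale produces either $r^{-N}$ (if one only uses the ball of radius $y/r$ and the trivial weight) or a divergent tail. The correct balance is forced precisely because the integrand $u^N(1+u)^{-N/2-1}$ lives on the boundary of $L^1$ behaviour, and its truncated integral on $[0, r]$ grows like $r^{N/2}$.
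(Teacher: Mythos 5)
Your proof is correct and follows essentially the same route as the paper: both reduce the hyperbolic distance between points at height $y$ to the Euclidean quantity $\|\ell + v_1 - v_2\|/y$, invoke the lattice-point count $\ll_L 1 + (yt)^N$ for hyperbolic balls of radius $t$, and obtain the bound by balancing at the scale $t \sim 1/r$ (equivalently $\|\ell\| \sim y/r$). The only presentational difference is that you evaluate the weighted sum by Abel summation against the counting function $N(t)$, whereas the paper splits the lattice sum directly into the ranges $\|\ell\| < y/r$ and $\|\ell\| \ge y/r$; the two computations are interchangeable.
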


\begin{proof}

We may assume without loss of generality that $v_1, v_2$ lie in a fixed fundamental domain for $\R^N / L$.  It follows from an elementary computation that if $d(\ell + x_1,x_2) < R$ then $\| \ell \| \ll_R y$, and
\[
d(\ell + x_1,x_2) \sim_R \| \ell + v_1 - v_2 \|/y =  \| \ell \| / y + O(1/y).
\]
In particular, there is $C > 0$ such that for $\| \ell \| > C$ we have $d(\ell + x_1,x_2) \sim \| \ell \| / y$.  The finitely many $\ell$ with $\| \ell \| \le C$ contribute $O(1)$ to the sum, so it suffices to show that
\begin{equation}
\label{ellsum}
\sum_{ \substack{ \ell \in L \\ C < \| \ell \| \ll y } } (1 + r d(\ell + x_1,x_2))^{-N/2} \sim \sum_{ \substack{ \ell \in L \\ C < \| \ell \| \ll y } } (1 + r \| \ell \|/y )^{-N/2} \ll 1 + y^N r^{-N/2}.
\end{equation}
We break the sum
\[
\sum_{ \substack{ \ell \in L \\ C < \| \ell \| \ll y } } (1 + r \| \ell \|/y )^{-N/2}
\]
into those $\ell$ with $\| \ell \| < y/r$, and the compliment.  If $\| \ell \| < y/r$, we have $1 + r \| \ell \|/y \sim 1$, so the contribution these terms make is asymptotically bounded by $\# \{ \ell \in L : \| \ell \| < y/r \} \ll 1 + y^N r^{-N}$ which is smaller than the right hand side of (\ref{ellsum}).  If $\| \ell \| \ge y/r$, we have $1 + r \| \ell \|/y \sim r \| \ell \|/y$, so the contribution from these terms is bounded by
\[
\sum_{ \substack{ \ell \in L \\ C < \| \ell \| \ll y } } (r \| \ell \|/y )^{-N/2} \ll y^{N/2} r^{-N/2} \sum_{ \substack{ \ell \in L \\ C < \| \ell \| \ll y } } \| \ell \|^{-N/2} \ll y^N r^{-N/2}
\]
which is again less than the right hand side of (\ref{ellsum}).  This completes the proof.

\end{proof}

\subsection{Proof of Theorem \ref{supnorm-thm-intro}}\label{thm-pf-sec}
In this section we will give the proof of Theorem \ref{supnorm-thm-intro} from the introduction. First note that Theorem \ref{trace-thm} gives 
$$|F(c_\infty^{-1} n_x a_y)| / \| F_f \|_2 \ll r^{N/2} + y^{N/2} r^{N/4}$$
 for all cusps of $\Gamma_S$ and all $y \gg 1$.  By Theorem \ref{Upper-bd-thm}, it suffices to consider the range where $1 \ll y < r/2\pi$.  We consider the point $y_0 = r^{ (N/2 + 1 + 2\theta)/(N + 1 + 2\theta)}$, which is chosen so that the expressions $y^{N/2} r^{N/4}$ and $y^{-N/2 -1 - 2\theta} r^{3N/4 + 1 +2\theta}$ appearing in Theorem \ref{trace-thm} and Theorem \ref{Upper-bd-thm} are both equal to $r^{N/2 + N(1+2\theta) / 4(N + 1 + 2\theta) }$ (our desired upper bound) when evaluated at $y_0$.  An elementary computation gives us that $y_0 < r^{11/12}$.

The expression $r^{N/2} + y^{N/2} r^{N/4}$ is increasing in $y$, so for $1 \ll y \le y_0$ we have
\begin{align*}
|F(c_\infty^{-1}n_x a_y)| / \| F_f \|_2 &\ll r^{N/2} + y^{N/2} r^{N/4} 
\le r^{N/2} + y_0^{N/2} r^{N/4} \\
&\ll r^{N/2 + N(1+2\theta) / 4(N + 1 + 2\theta)}.
\end{align*}

We next suppose that $y_0 \le y < r / 2\pi$.  Because the upper bound given by Theorem \ref{Upper-bd-thm} is decreasing in $y$ (including across the transition point at $y = r^{11/12}$), we may obtain an upper bound for $|F(c_\infty^{-1}n_x a_y)| / \| F_f \|_2$ by evaluating the upper bound of Theorem \ref{Upper-bd-thm} at $y_0$.  As $y_0 < r^{11/12}$, this gives
\[
|F(c_\infty^{-1}n_x a_y)| / \| F_f \|_2 \ll y_0^{-N/2 -1 - 2\theta} r^{3N/4 + 1 +2\theta} = r^{N/2 + N(1+2\theta) / 4(N + 1 + 2\theta) }.
\]
Note that the Casimir eigenvalue $\Lambda \sim r^2$ since $r \gg 1$.  Combining the above computation completes the proof of the upper bound in Theorem \ref{supnorm-thm-intro}.

\end{document}